\documentclass[reqno,11pt,a4paper]{amsart}
\usepackage[a4paper,left=30mm,right=30mm,top=30mm,bottom=30mm,marginpar=20mm]{geometry}

\usepackage{amsmath,amsthm,amssymb,amsfonts,mathrsfs,color,hyperref,xcolor}
\usepackage{graphicx}
\usepackage{bbm}
\usepackage{amsbsy}
\usepackage{latexsym}
\usepackage{faktor}
\usepackage[utf8]{inputenc}
\usepackage{cite}
\usepackage{enumerate}
\usepackage[shortlabels]{enumitem}
\usepackage{setspace}
\usepackage{scalerel,stackengine}

\allowdisplaybreaks

\theoremstyle{plain}
\begingroup
\newtheorem{thm}{Theorem}[section]
\newtheorem{lem}[thm]{Lemma}
\newtheorem{prop}[thm]{Proposition}

\endgroup

\theoremstyle{definition}
\begingroup
\newtheorem{defn}[thm]{Definition}
\newtheorem{rem}[thm]{Remark}

\endgroup

\numberwithin{equation}{section}

\newcommand{\res}{\mathop{\hbox{\vrule height 7pt width .5pt depth 0pt
\vrule height .5pt width 6pt depth 0pt}}\nolimits}

\newcommand{\N}{\mathbb N} 
\newcommand{\R}{\mathbb R} 

\newcommand{\wto}{\rightharpoonup}
\newcommand{\wsto}{\stackrel{*}{\rightharpoonup}}
\newcommand{\e}{\varepsilon}

\newcommand{\LL}{{\mathcal L}}
\newcommand{\HH}{{\mathcal H}}
\newcommand{\M}{{\mathcal M}}
\newcommand{\A}{{\Acal}}

\newcommand{\Crm}{\mathrm{C}}

\newcommand{\Hrm}{\mathrm{H}}

\newcommand{\Lrm}{\mathrm{L}}

\newcommand{\Acal}{\mathcal{A}}

\newcommand{\Dcal}{\mathcal{D}}

\newcommand{\Lcal}{\mathcal{L}}

\newcommand{\Rcal}{\mathcal{R}}

\newcommand{\Cbb}{\mathbb{C}}

\DeclareMathOperator{\supp}{supp}

\newcommand{\ii}{\mathrm{i}}

\newcommand{\setb}[2]{\bigl\{\, #1 \ \ \textup{\textbf{:}}\ \ #2 \,\bigr\}}
\newcommand{\setB}[2]{\Bigl\{\, #1 \ \ \textup{\textbf{:}}\ \ #2 \,\Bigr\}}
\newcommand{\setBB}[2]{\biggl\{\, #1 \ \ \textup{\textbf{:}}\ \ #2 \,\biggr\}}

\newcommand{\norm}[1]{\|#1\|}

\newcommand{\altnorm}[1]{{\left\vert\kern-0.25ex\left\vert\kern-0.25ex\left\vert #1 \right\vert\kern-0.25ex\right\vert\kern-0.25ex\right\vert}}

\newcommand{\dprb}[1]{\bigl\langle #1 \bigr\rangle}

\newcommand{\dprBB}[1]{\biggl\langle #1 \biggr\rangle}

\newcommand{\di}{\mathrm{d}}
\newcommand{\dd}{\;\mathrm{d}}

\newcommand{\loc}{\mathrm{loc}}

\newcommand{\per}{\mathrm{per}}

\newcommand{\todown}{\downarrow}

\def\XXint#1#2#3{{\setbox0=\hbox{$#1{#2#3}{\int}$} 
\vcenter{\hbox{$#2#3$}}\kern-.5\wd0}}

\DeclareMathOperator{\tr}{tr}

\DeclareMathOperator{\dive}{div}

\newcommand{\Msn}{\mathbb{M}^{n{\times}n}_{\mathrm{sym}}}

\begin{document}
\title[Shape optimization of light structures]{Shape optimization of light structures with general Hooke's laws}
\author[D. Andreakis]{Dimitrios Andreakis}
\address[D. Andreakis]{Mathematics Institute, University of Warwick, Coventry CV4 7AL, UK.}
\email{Dimitrios.Andreakis@warwick.ac.uk}

\author[F. Iurlano]{Flaviana Iurlano}
\address[F. Iurlano]{Dipartimento di Matematica, Università di Genova, via Dodecaneso 35, 16146 Genoa, Italy}
\email{flaviana.iurlano@unige.it}

\author[F. Rindler]{Filip Rindler}
\address[F. Rindler]{Mathematics Institute, University of Warwick, Coventry CV4 7AL, UK.}
\email{F.Rindler@warwick.ac.uk}
	





\begin{abstract}
{For the minimum-compliance shape optimization problem it was established recently [\textit{Duke Math.\ J.} 172 (2023), 43--103] that a sequence of approximately optimal shapes of exact volume \(\e\) converges to a limiting generalized shape, which is represented by a possibly diffuse probability measure, as \(\e \todown 0\). This generalized shape minimizes the so-called light-structures compliance functional, which --- as conjectured by Bouchitt\'e --- is expressed in terms of a relaxed integrand, reflecting that optimal shapes must only involve ``lower-dimensional'' structures, like the Michell-type trusses that are well known in the engineering domain. However, the aforementioned result only applies for the simplest isotropic quadratic energy density, that is, the squared Frobenius norm, in dimensions two and three. The present work extends the relaxation result to the entire class of (isotropic or anisotropic) elasticity tensors and to any dimension, thus solving Bouchitt\'e's conjecture in full generality. Since the previous work relied on explicit formulas for the relaxed integrands (due to Allaire, Kohn, and Strang), which are not available in the general case, several new arguments and substantial modifications of the original approach are required. We also significantly simplify and streamline other parts of the original argument, in addition to clarifying some related statements in the literature.}
\medskip


\medskip

\noindent\textsc{Date:} \today
\end{abstract}

\maketitle


\section{Introduction}

{The Optimal Light Structures Conjecture~\cite{Bouchitte03}, due to G.~Bouchitt\'{e}, concerns the following optimal design problem: Given an amount of elastic material, one is tasked with distributing this material in the most rigid way possible, as measured by minimal (linearly-)elastic compliance. An \emph{optimal light structure} then is the (rescaled) limit of those optimally rigid distributions of material as the amount of material tends to zero. In~\cite{Bouchitte03} a formula is proposed for the limit functional representing the relaxed compliance. The conjectured shape of the proposed integrand of this functional expresses the fact that only lower-dimensional structures should be used for the optimal solution. This is closely related to the theory of Michell trusses~\cite{Michell04,Olsen00,Olbermann17,Olbermann22}, which says that optimal structures should be made up of beams (trusses) and not truly three-dimensional structures.

In the case of the simplest elastic energy density $\frac12 |\cdot|^2$ (with $|\cdot|$ the Frobenius norm), the Optimal Light Structures Conjecture was recently solved in~\cite{BabadjianIurlanoRindler23} via a strategy inspired by the work~\cite{BabadjianIurlanoRindler21} on the convergence of a damage model to a plasticity model in the diffuse-concentration limit, as well as by insights from~\cite{KohnStrang86_1,KohnStrang86_2,KohnStrang86_3,AllaireKohn93,BouchitteButtazzo01}. However, even for the case of isotropic energy densities, this is not a complete solution since, for instance, it does not cover the classical case of isotropic elasticity with arbitrary Lam\'e constants.

This work completes the analysis and proves the conjecture for all integrands of linearized elasticity.

In the following we describe the setup and our main theorem. A more extensive introduction to the problem, as well as further background and applications, can be found in~\cite{BabadjianIurlanoRindler23}.} Let $n\in\N$, $n\geq2$, and let $\Omega \subset \R^n$ be a bounded $\Crm^2$-domain, namely $\Omega$ is assumed to be open, bounded, connected, and to have boundary $\partial \Omega$ of class $\Crm^2$. Let $\Cbb$ be a fourth-order symmetric elliptic tensor, with entries {$\mathrm{C}_{ijkl}$, $i,j,k,l=1,\dots,n$}. Let $j$ be the quadratic form associated to $\Cbb$,
\[
  j(\xi) := \frac12 \dprb{\Cbb\xi  ,\xi}  \qquad\text{for all symmetric matrices $\xi \in \Msn$.}
\]
We recall the definition of the Legendre--Fenchel convex conjugate of $j$,
  \begin{equation}\label{Leg-Fen}
  j^*(\tau) := \sup_{\xi \in \Msn} \bigl\{ \dprb{\xi,\tau} - j(\xi) \bigr\},  \qquad \tau \in \Msn,
\end{equation} and also introduce the function (a kind of ``partial'' bi-conjugate)
\begin{equation}\label{Leg-Fen-wave}\bar j(\xi):=\sup_{\tau \in \Lambda_{\dive}}\left\{\dprb{\xi , \tau} - j^*(\tau) \right\}, \qquad \xi \in \Msn,
\end{equation}
where $\Lambda_{\dive}:=\{\tau\in \Msn:\ \det\tau=0\}$ denotes the {Tartar wave cone~\cite{Tartar79,Rindler26book}} of the divergence operator.

We fix $f\in\Lrm^2(\partial\Omega;\R^n)$. It will be regarded as an element of $\Hrm^{-1}(\R^n;\R^n)$ via  $$f(v):=  \int_{\partial\Omega}f\cdot \ v \dd\HH^{n-1}  \qquad \text{for} \ v\in \Hrm^{1}(\R^n;\R^n),$$  
or as an element of $\M(\R^n;\R^n)$ via $$ f(A):=\int_{A\cap\partial\Omega}f \dd\HH^{n-1} \qquad \text{for }A\subset \R^n \text{ Borel.}$$

We also require that
\[
  \int_{\partial\Omega} f\cdot r \dd\HH^{n-1}=0  \qquad\text{for all rigid motions $r \in \Rcal$,}
\]
with $\Rcal$ containing all skew-affine maps, see~\eqref{eq:rigid} below.

For $0 < \e < \LL^n(\Omega)$, the set of admissible shapes is
 $$ \A_\e := \setb{ \omega \subset \Omega }{ \text{$\omega$ Lipschitz domain, $\partial\Omega\subset\partial\omega$, and $\LL^n(\omega) = \e$} }.$$
 We also set 
$$X_\e(\Omega) :=\setBB{(\sigma,\mu) \in X(\Omega) }{ \mu=\frac{\LL^n\res \omega}{\e} \text{ for some }\omega \in \A_\e }$$
and
$$X(\Omega) :=\setb{(\sigma,\mu) }{ \mu \in \M^1(\overline\Omega) \text{ and } \sigma \in \Lrm^2(\R^n,\mu;\Msn) },$$
where $\M^1(\overline\Omega)$ denotes the space of probability measures supported in $\overline\Omega$.
We then define the energy functionals $\mathscr E_\e \colon X(\Omega) \to [0,\infty]$ and $\overline{\mathscr E} \colon X(\Omega) \to [0,\infty)$, for $(\sigma,\mu) \in X(\Omega)$, as
\[
  \mathscr E_\e(\sigma,\mu) := \begin{cases}
    \displaystyle \int_{\R^n} j^*(\sigma) \dd \mu   & \text{ if } (\sigma,\mu) \in X_\e(\Omega),\\
    +\infty  &\text{ otherwise,}
   \end{cases}
\]
and
  $$\overline{\mathscr E}(\sigma,\mu) := \int_{\R^n} \bar j^*(\sigma) \dd \mu.\quad $$
   We finally introduce the compliances corresponding to the energies $\mathscr E_\e, \overline{\mathscr E}$, respectively, as \begin{equation}\label{eq:Ceps}
\mathscr C_\e(\mu) :=  \inf \setB{ \mathscr E_\e(\sigma,\mu) }{ \sigma \in \Lrm^2(\R^n,\mu;\Msn),\; -\dive (\sigma\mu)=f  \text{ in }\Dcal'(\R^n;\R^n)}
\end{equation}
and
\begin{equation}\label{eq:barC}
\overline{\mathscr C}(\mu) :=  \inf \setB{ \overline{\mathscr E}(\sigma,\mu) }{ \sigma \in \Lrm^2(\R^n,\mu;\Msn),\; -\dive (\sigma\mu)=f  \text{ in }\Dcal'(\R^n;\R^n)}
\end{equation}
for $\mu \in \M^1(\overline\Omega)$.

The main result of this work is the following:\newpage

\begin{thm} \label{thm:conv-min} \phantom{.}
\begin{enumerate}[(i)]
\item {\bf Convergence of almost-minimizers (lower bound).} For $\e>0$, let $\omega_\e \in \A_\e$ be such that 
\[
  \mathscr C_\e \biggl( \frac{\LL^n\res \omega_\e}{\e} \biggr) \leq \inf_{\omega\in\A_\e} \mathscr C_\e \biggl( \frac{\LL^n\res \omega}{\e} \biggr)  + \alpha_\e,  \qquad\text{where}\qquad \alpha_\e \todown 0.
\]
Then, there exists a sequence $\{\e_k\}_{k \in \N}$ with $\e_k \todown 0$ such that the probability measures
\[
  \frac{1}{\e_k}\LL^n\res\omega_{\e_k},  \qquad k \in \N,
\]
converge weakly* in $\M(\R^n)$ to a solution $\bar\mu \in \M^1(\overline\Omega)$ of
\begin{equation}\label{eq:mass-opt}
\min_{\M^1(\overline\Omega)} \overline{\mathscr C},
\end{equation} 
and
\[
  \lim_{k \to \infty} \mathscr C_{\e_k} \biggl( \frac{\LL^n\res \omega_{\e_k}}{\e_k} \biggr)
  = \overline{\mathscr C}(\bar \mu)=\min_{\M^1(\overline\Omega)} \overline{\mathscr C}.
\]

\item {\bf  Convergence of minimum values (upper bound).}  Let $\mu^* \in \M^1(\overline\Omega)$ be a solution of~\eqref{eq:mass-opt}. Then, for $\e>0$, there exists $\omega^*_\e \in \A_\e$ such that 
$$\frac{\LL^n\res\omega^*_\e}{\e} \wsto \mu^*\quad\text{ in }\M(\R^n)$$
as $\e\to 0$, and
$$\min_{\M^1(\overline\Omega)} \overline{\mathscr C}= \overline{\mathscr C}(\mu^*)=\lim_{\e \to 0}\mathscr C_\e\left(\frac{1}{\e}\LL^n\res\omega^*_\e\right)= \lim_{\e \to 0} \inf_{\omega \in \A_\e} \mathscr C_\e \biggl( \frac{\LL^n\res \omega}{\e} \biggr).$$
\end{enumerate}
\end{thm}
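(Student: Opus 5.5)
The theorem is a $\Gamma$-convergence statement, and I would prove it that way. The core is to show that the compliances $\mathscr C_\e$, regarded as functionals of $\mu\in\M^1(\overline\Omega)$ with the weak* topology, $\Gamma$-converge to $\overline{\mathscr C}$. Both (i) and (ii) then follow by the standard argument. Compactness is automatic, since the measures $\LL^n\res\omega_\e/\e$ are probability measures on the compact set $\overline\Omega$. The lower bound together with the recovery sequences yields convergence of minimum values, and the almost-minimality condition with $\alpha_\e\todown0$ identifies every limit point as a minimizer of~\eqref{eq:mass-opt}. To get existence of a minimizer of $\overline{\mathscr C}$, I would first prove that $\overline{\mathscr C}$ is weakly* lower semicontinuous on $\M^1(\overline\Omega)$. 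I would do this by duality: $\overline{\mathscr C}(\mu)=\sup_{v}\bigl\{f(v)-\int\bar j(e(v))\dd\mu\bigr\}$ over smooth $v$. This is a supremum of affine continuous functionals in $\mu$, and the identity $\bar j^{**}=\bar j$ holds because $\bar j$ is a supremum of affine functions. The duality itself comes from convex analysis, using that $\bar j^*$ has quadratic growth, being the convex envelope of $j^*$ restricted to $\Lambda_{\dive}$.

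\textbf{Lower bound.} Let $\mu_\e=\LL^n\res\omega_\e/\e\wsto\mu$ with $\sup_\e\mathscr C_\e(\mu_\e)<\infty$, and let $\sigma_\e$ be the optimal stresses. Then $\sigma_\e\mu_\e$ is bounded in $\M$ and converges to some $\sigma\mu$, and the equilibrium $-\dive(\sigma\mu)=f$ passes to the limit. I would not follow the Allaire--Kohn--Strang route, which relies on explicit formulas. Instead I would use the dual representation: for every smooth $v$,
\[
\int j^*(\sigma_\e)\dd\mu_\e\ \ge\ f(v)-\int j(e(v))\dd\mu_\e ,
\]
but this only yields $j$, not $\bar j$. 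To obtain $\bar j$ I need the finer structure. On the scale $\e$, the stresses $\sigma_\e$ are divergence-free fields supported on thin sets, so their Young measures, and their blow-ups at $\mu$-a.e.\ point, are asymptotically concentrated on $\Lambda_{\dive}$. The precise claim I would establish is that for $\mu$-a.e.\ $x$ the generalized Young measure generated by $(\sigma_\e,\mu_\e)$ has barycenter $\sigma(x)$ and is supported in $\Lambda_{\dive}$, at least in the sense of its oscillation part. The heuristic is that a thin domain with vanishing volume cannot carry a full-rank divergence-free stress. Jensen's inequality for the convex hull of $j^*+\chi_{\Lambda_{\dive}}$, which is exactly $\bar j^*$, then gives the liminf inequality. \textbf{This localization/wave-cone step is the main obstacle.} My first attempt would be a blow-up argument: rescale $\omega_\e$ around a Lebesgue point of $\mu$, and use a Korn-type or compensated-compactness estimate to show that the limit stress field lies in $\Lambda_{\dive}$ almost everywhere. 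If that fails, the fallback is to test against $\bar j(e(v))$ directly: construct $v_\e$ whose strains on $\omega_\e$ realize the supremum in~\eqref{Leg-Fen-wave}.

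\textbf{Upper bound (recovery).} Given $\mu^*$ and an optimal $\sigma$, I would first reduce, by density and the lower semicontinuity above, to $\mu^*$ and $\sigma$ piecewise constant on a fine grid, with $\mu^*$ absolutely continuous. On each cell I write $\bar j^*(\sigma)=\sum_i\lambda_i j^*(\tau_i)$ with $\tau_i\in\Lambda_{\dive}$ and $\sum_i\lambda_i\tau_i=\sigma$. Each rank-deficient $\tau_i$ is realized by thin laminates of material: layers normal to a kernel direction of $\tau_i$ can carry $\tau_i$ divergence-free, and the volume fraction is rescaled by $\e$. I would nest these laminates in the usual way and patch them across cells and near $\partial\Omega$ with small boundary layers, which cost $o(1)$ energy thanks to the $\Crm^2$ boundary. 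The construction is adjusted to exact volume $\e$, and the resulting domain is made Lipschitz and connected with $\partial\Omega\subset\partial\omega$. A diagonal argument then produces $\omega^*_\e$ with $\limsup_\e\mathscr C_\e\le\overline{\mathscr C}(\mu^*)$.
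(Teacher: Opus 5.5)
Your $\Gamma$-convergence framing, the compactness, and the deduction of (i)--(ii) from the two bounds match Step 2 of the paper's proof. The gap is in the lower bound, at exactly the step you flag. It cannot be closed as you propose, because the claim is false: a set of volume $\e$ need not be thin, and nothing forces the $\mu_\e$-Young measure of $\sigma_\e$ (oscillation part included) to be supported in $\Lambda_{\dive}$.

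Here is a counterexample for $n=2$. Take a radial bump $\phi\in\Crm^\infty_c(B_1)$. Then $s:=\mathrm{cof}\,\nabla^2\phi$ is symmetric, divergence-free and compactly supported in $B_1$, i.e.\ a traction-free residual stress, and $\det s=\det\nabla^2\phi\neq0$ on a set of positive measure. Reserve a fraction $a\in(0,1)$ of the volume for a fine periodic array of disks $B_r(x_i)$. Link them to the rest of $\omega_\e$ by unstressed strips of negligible area, and let them carry the stresses $s((\cdot-x_i)/r)$.
\begin{itemize}
\item Equilibrium $-\dive(\sigma_\e\mu_\e)=f$ is untouched.
\item $\int j^*(\sigma_\e)\dd\mu_\e$ stays bounded, since the stress values do not depend on $r$.
\item The Young measure nevertheless gives weight of order $a$ to full-rank matrices.
\end{itemize}
So no blow-up, Korn or compensated-compactness argument can place the limiting stresses in $\Lambda_{\dive}$, and Jensen's inequality for $j^*+\chi_{\Lambda_{\dive}}$ is unavailable. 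Since a $\Gamma$-liminf must hold along all such sequences, the argument breaks down. Your fallback, building $v_\e$ that realize $\bar j$ on an arbitrary, unknown $\omega_\e$, restates the difficulty rather than resolving it.

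The missing idea is that compensated compactness gives only second-moment information, and this suffices precisely because $j^*$ is quadratic.
\begin{itemize}
\item Set $\tau_k:=\sqrt{\e_k}\,\sigma_k\mu_k$. Then $\tau_k\wto0$ in $\Lrm^2$ and $\dive\tau_k\to0$ in $\Hrm^{-1}$ (Proposition~\ref{prop:compactness}), and exactly $\int\varphi^2 j^*(\tau_k)\dd x=\int\varphi^2 j^*(\sigma_k)\dd\mu_k$.
\item For each $\xi$ with $\bar j(\xi)\le\frac12$, split $j^*=Q_\xi+\frac12\dprb{\xi,\cdot}^2$. Because $\bar j^*=j^*$ on $\Lambda_{\dive}$ and $\bar j^*=\frac12\sup_{\bar j(\xi)\le 1/2}\dprb{\xi,\cdot}^2$, the form $Q_\xi$ is nonnegative on $\Lambda_{\dive}$ (Lemma~\ref{Qxi}).
\item Tartar's lemma (Lemma~\ref{lem:sloth sympageia}) then discards $Q_\xi$ in the liminf.
\item The convex term $\frac12\dprb{\xi,\cdot}^2$ is lower semicontinuous along $\sigma_k\mu_k\wsto\sigma\mu$.
\item Taking the supremum over $\xi$ at $\mu$-a.e.\ point gives $\frac{\di\gamma}{\di\mu}\ge\bar j^*(\sigma)$.
\end{itemize}
This is consistent with the example above. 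There $\int\det s\dd x=0$, which is exactly the kind of constraint Tartar's lemma enforces (with $q=\pm\det$).

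Your recovery construction takes a genuinely different route from the paper. The paper gets the upper bound from the relaxation $Qh_\e\to\rho^\circ$ of the generalized Kohn--Strang integrand (Hashin--Shtrikman bounds and sequential laminates), Proposition~\ref{prop:Olbermann}, and the Bouchitt\'e--Buttazzo identity $\rho^\circ(\sigma^*)=\kappa$ $\mu^*$-a.e. Locally, superposing thin layers normal to kernel directions of $\tau_i\in\Lambda_{\dive}$ is sound and simpler, since crossings cost only $O(\e^2)$ volume. The global reduction, however, is misjustified in two ways.
\begin{itemize}
\item Lower semicontinuity of $\overline{\mathscr C}$ points the wrong way for a density argument. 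What you need is $\limsup_j\overline{\mathscr E}(\sigma_j,\mu_j)\le\overline{\mathscr E}(\sigma,\mu^*)$ along the approximations.
\item Piecewise-constant $\sigma_j\mu_j$ violate equilibrium across every interior cell face, which is not a boundary-layer effect. You must work with an $\Hrm^{-1}$-small equilibrium defect and correct it via Proposition~\ref{prop:riesz type theorem}. The correction must be carried on spare volume; in the paper this is what the parameter $m<1$ provides, together with the cited correction argument.
\end{itemize}
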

As mentioned before, the above result was proved in the special case $j=\frac12|\cdot|^2$ and for $n=2,3$ in~\cite{BabadjianIurlanoRindler23}. {In those cases, \emph{explicit} formulas are known for the relaxation of the so-called Kohn--Strang integrand~\cite{KohnStrang86_1,KohnStrang86_2,KohnStrang86_3,AllaireKohn93,AllaireKohn93b,Allaire02book} and this simplifies the analysis considerably. In the general case, only the abstract Hashin–Shtrikman bounds~\cite{AllaireKohn93,Allaire02book} may be employed. Additionally, the compensated compactness and convex analysis arguments of~\cite{BabadjianIurlanoRindler23} relied on involved linear algebra estimates between singular values and eigenvalues, which only work for the case of the Frobenius norm. It turns out, however, that a different way of reasoning not only generalizes readily to arbitrary quadratic forms, but also provides a cleaner argument in the cases treated in~\cite{BabadjianIurlanoRindler23}. Finally, we also need to clarify some arguments in the literature concerning $\dive$-quasiconvex hulls.}

\subsection*{Acknowledgements}
Dimitrios Andreakis is supported by the Warwick Mathematics Institute Centre for Doctoral Training, and gratefully acknowledges funding from the University of Warwick and the UK Engineering and Physical Sciences Research Council (Grant number: EP/W524645/1). Flaviana Iurlano is member of GNAMPA of INdAM. Filip Rindler gratefully acknowledges funding from the UKRI Frontier Research Guarantee (ERC guarantee) grant EP/Z000297/1 (ERC CONCENTRATE).

\section{Preliminaries}

\subsection{Notation}
We denote by $\mathbb M^{n \times n}$ the space of $n\times n$ matrices, and by $\Msn$ and $\mathbb M^{n \times n}_{\rm skew}$ the subspaces of symmetric and skew-symmetric matrices, respectively. If $\xi, \zeta \in \mathbb M^{n \times n}$, then $\dprb{\xi,\zeta}:=\tr(\xi^T \zeta)$ denotes their Frobenius product, and $|\xi|:=\sqrt{\dprb{\xi,\xi}}$ is the corresponding Frobenius norm. We denote the Lebesgue measure in $\R^n$ by $\Lcal^n$ and the $k$-dimensional Hausdorff measure by $\HH^k$.

Let $Q=(0,1)^n$ be the open unit cube in $\R^n$. We say that $\phi \colon \R^n \to \Msn$ is $Q$-periodic if $\phi(x+e_i)=\phi(x)$ for every $x\in\R^n$ and every $i=1,\dots,n$. For measurable maps, this equality is understood to hold for $\Lcal^n$-a.e. $x\in\R^n$. We denote by $\Crm^\infty_{\rm per}(Q;\Msn)$ the space of all $Q$-periodic functions in $\Crm^\infty(\R^n;\Msn)$. The spaces $\Lrm^2_{\rm per}(Q;\Msn)$ and $\Hrm^1_{\rm per}(Q;\R^n)$ are defined analogously.

We denote by $\M(\R^n;\Msn)$, respectively by $\M(\R^n)$, the space of $\Msn$-valued, respectively real-valued, finite Radon measures on $\R^n$. The sets $\M^+(\R^n)$ and $\M^1(\R^n)$ denote the spaces of nonnegative {locally finite} Radon measures and probability measures on $\R^n$, respectively. If $K \subset \R^n$ is compact, we denote by $\M(K;\Msn)$ the space of $\Msn$-valued finite Radon measures on $\R^n$ with support contained in $K$. Corresponding definitions are used for $\M(K)$, $\M^+(K)$, and $\M^1(K)$. If $f\in \Lrm^1(\R^n)$, we will often identify $f$ with the absolutely continuous measure $f\Lcal^n$.

Given two nonnegative quantities $a$ and $b$, we write $a\sim b$ if there exist constants $c,C>0$, independent of the relevant parameters, such that $ca\leq b\leq Ca$. Throughout this work, $c$ and $C$ denote positive constants that may change from line to line.

The {{\it Tartar wave cone}~\cite{Tartar79} (also see Section~8.2 in~\cite{Rindler26book} for its relation to the theory of compensated compactness)} associated with the row-wise divergence ``$\dive$'' is the set $\Lambda_{\dive} \subset \Msn$ given as
$$
\Lambda_{\dive}:=\bigcup_{\lambda\neq0}\ker\ \mathbb K(\lambda),
$$
where
$$
 \mathbb K(\lambda)M:=(2\pi\ii)M\lambda,
$$
for $M\in\Msn$ and $\lambda\in\R^n$. Equivalently,
$$
\Lambda_{\dive}=\setb{M\in\Msn}{\det M=0}.
$$

For every subspace $V$ of $\Msn$, denote its complexification by
$$
V_{\mathbb C}:=\setb{x+\ii y}{x,y\in V}.
$$
Moreover, we let $\Pi_V\xi$ be the orthogonal projection of $\xi\in\Msn$ onto $V$.

Finally, we define
\begin{equation} \label{eq:rigid}
\Rcal:=\setB{r\colon\R^n\to\R^n}{r(x)=Sx+b,\ S\in\mathbb M^{n\times n}_{\rm skew},\ b\in\R^n}
\end{equation}
to be the space of rigid body motions.

\subsection{Convex analysis} \label{sc:convex}
{As the fourth-order tensor $\Cbb$ from the statement of the main theorem is assumed to be elliptic,} there are positive constants $c$ and $C$ such that
\begin{equation}
\label{eq:forallsloth}
c|\xi|^2\leq j(\xi)\leq C|\xi|^2 , \qquad \xi \in \Msn.
\end{equation}
Also, 
$$j^*(\tau)=\frac12 \dprb{\Cbb^{-1}\tau,\tau}, \qquad \tau \in \Msn,$$
hence
\begin{equation}\label{eq:alpha}
c|\tau|^2\leq j^*(\tau)\leq C|\tau|^2 , \qquad \tau \in \Msn.
\end{equation}
Recalling the definition in~\eqref{Leg-Fen-wave}, it is easy to see that $\bar j$ is convex, as well as positively $2$-homogeneous.
We also define the \emph{gauge function} $\rho \colon \Msn \to [0,\infty)$ of the convex set $\{\bar j \leq \frac12\}$ by
\begin{equation}\label{eqrho}
\rho(\xi):=\inf\setBB{t>0 }{ \bar j\left(\frac{\xi}{t}\right) \leq \frac12 }, \qquad \xi \in \Msn.
\end{equation}
This {function} is convex, continuous, and positively 1-homogeneous.
Due to the 2-homogeneity of $\bar j$, we get
\[
  \bar j = \frac12 \rho^2.
\]
We also introduce the \emph{polar function} $\rho^\circ \colon \Msn\to [0,\infty)$ of $\rho$, defined by
$$\rho^\circ(\tau):=\sup_{\rho(\xi)\leq 1}\dprb{\xi,\tau}=\sup_{\bar j(\xi)\leq \frac12}\dprb{\xi,\tau}, \qquad \tau \in \Msn.$$
According to~\cite[Corollary~15.3.1]{Rockafellar70book}, for the convex conjugate $\bar j^* \colon \Msn \to [0,\infty)$ of $\bar j$, defined by
\[
  \bar j^*(\tau):=\sup_{\xi\in\Msn} \bigl\{\dprb{\xi,\tau}-\bar j(\xi)\bigr\}, \qquad \tau \in \Msn,
\]
we have
\begin{equation}\label{eq:maxalpha}
  \bar j^*=\frac12 (\rho^\circ)^2.
\end{equation}

\subsection{Compensated compactness} \label{sc:CC}
We shall use the following well-known compensated compactness lemma, which will allow us to modify the integrand $j^*$ by adding and subtracting a suitable quadratic form, whose contribution has the correct sign under the relevant asymptotic differential constraints  (see Proposition~\ref{prop:liminf}). For the proof of the lemma we refer to~\cite[Theorem 11]{Tartar79} (see also~{\cite[Theorem~8.7]{Rindler26book}}).
\begin{lem}
\label{lem:sloth sympageia}
Suppose that $\tau_k \wto 0$ weakly in $\Lrm^2(\R^n;\Msn),$ $\dive\tau_k \to 0$ strongly in $\Hrm^{-1}(\R^n;\R^n)$ and also that the supports of $\tau_k$ are contained in a bounded set. Let $q$ be a quadratic form such that $$q(\tau)\geq 0,  \qquad \tau \in \Lambda_{\dive},$$ and assume that the sequence of functions {$q \circ \tau_k$} is uniformly bounded in $\Lrm^1(\R^n)$. Then, for any $\varphi \in \Crm^\infty_c(\R^n)$ it holds that $$\liminf_{k \to \infty} \int_{\R^n} \varphi^2 q(\tau_k) \dd x \geq 0.$$
\end{lem}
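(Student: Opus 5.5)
This lemma is Tartar's compensated compactness theorem in its standard form. I would prove it by localization and Fourier analysis.

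\emph{Step 1: localize.} Set $v_k:=\varphi\tau_k$. Then $v_k\wto 0$ in $\Lrm^2$, and $\dive v_k=\varphi\dive\tau_k+\tau_k\nabla\varphi$. The first term tends to zero strongly in $\Hrm^{-1}$ by hypothesis. For the second, $\tau_k\nabla\varphi\wto0$ in $\Lrm^2$ with supports in a fixed bounded set, and Rellich's compact embedding $\Lrm^2\hookrightarrow \Hrm^{-1}$ on bounded sets gives strong convergence to zero in $\Hrm^{-1}$. Hence $\dive v_k\to0$ in $\Hrm^{-1}$, the $v_k$ are uniformly compactly supported, and $\varphi^2q(\tau_k)=q(v_k)$.

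\emph{Step 2: Plancherel.} Extend $q$ to a Hermitian form $\tilde q$ on $(\Msn)_{\mathbb C}$. Then
\[
\int q(v_k)\dd x=\int \tilde q(\hat v_k(\lambda))\dd\lambda .
\]
Split the frequency domain into $\{|\lambda|\le R\}$ and $\{|\lambda|>R\}$. Because the supports are uniformly bounded and $v_k\wto0$, we have $\hat v_k\to0$ pointwise with a uniform bound. Dominated convergence then shows that the low-frequency part tends to $0$ for every fixed $R$.

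\emph{Step 3: high frequencies.} The key algebraic fact comes from compactness of the unit sphere and nonnegativity of $q$ on $\ker\mathbb K(\lambda)$. For every $\delta>0$ there is $C_\delta$ such that
\[
\tilde q(M)\ge -\delta|M|^2-C_\delta\,|M\lambda|^2 \quad\text{for all } M\in(\Msn)_{\mathbb C},\ |\lambda|=1 .
\]
To prove it, argue by contradiction. A failing sequence yields normalized $M_j$ and $\lambda_j$ with $|M_j\lambda_j|\to0$ and $\tilde q(M_j)\le-\delta$. In the limit, $M\lambda=0$ and $\tilde q(M)<0$. Writing $M=x+\ii y$, we get $\tilde q(M)=q(x)+q(y)$, and both $x$ and $y$ lie in $\ker\mathbb K(\lambda)\subset\Lambda_{\dive}$, so $\tilde q(M)\ge0$, a contradiction.

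Apply this with $\lambda/|\lambda|$. Since $|\hat v_k(\lambda)\lambda|/|\lambda|\lesssim|\widehat{\dive v_k}(\lambda)|/(1+|\lambda|)$ for $|\lambda|>R\ge1$, the high-frequency part is bounded below by $-\delta\sup_k\|v_k\|_{\Lrm^2}^2-C_\delta\|\dive v_k\|_{\Hrm^{-1}}^2$.

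\emph{Step 4: conclude.} Take the liminf in $k$, then let $\delta\to0$, to obtain $\liminf_k\int\varphi^2q(\tau_k)\dd x\ge0$.

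The uniform $\Lrm^1$ bound on $q\circ\tau_k$ is only relevant when $q$ is indefinite. The argument above uses the $\Lrm^2$ boundedness of $\tau_k$, which weak convergence already provides.

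The main obstacle is Step 3, specifically handling the complexification correctly. The wave-cone condition only gives nonnegativity of $q$ on real matrices, so the inequality has to hold for the Hermitian extension. This works because $\ker\mathbb K(\lambda)$ is real for real $\lambda$, which allows the splitting $\tilde q(x+\ii y)=q(x)+q(y)$.
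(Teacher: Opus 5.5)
Your proof is correct and is essentially the classical Fourier-analytic proof of Tartar's quadratic compensated-compactness theorem: localization by $\varphi$, Plancherel with the Hermitian extension, a low/high frequency splitting, and the estimate $\tilde q(M)\ge-\delta|M|^2-C_\delta|M\lambda|^2$ obtained from nonnegativity of $q$ on the real kernel $\ker\mathbb K(\lambda)$. The paper does not reprove the lemma; it cites exactly this argument from \cite[Theorem 11]{Tartar79} and \cite[Theorem~8.7]{Rindler26book}. One small correction to your side remark: the $\Lrm^1$ hypothesis on $q\circ\tau_k$ is redundant for every quadratic form, definite or not, because $|q(\tau)|\le C|\tau|^2$ and $\tau_k$ is bounded in $\Lrm^2$.
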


We now define the quadratic forms used in the sequel.
\begin{defn}
For $\xi \in \Msn$ such that $\rho(\xi) \leq 1$ (or, equivalently,  $\bar j(\xi) \leq \frac12$), we define the quadratic form $Q_\xi \colon \Msn \to \R$ by
\begin{equation}\label{eq:Qxi} Q_\xi(\tau):=j^*(\tau) - \frac12 \dprb{\xi,\tau}^2, \qquad \tau \in \Msn.
\end{equation}
\end{defn}
In order to apply the above lemma, we need to show that the quadratic form $Q_\xi$ is nonnegative on $\Lambda_{\dive}$. To prove this fact, we use the following auxiliary result.
\begin{lem} \label{jbar}
For all $\tau \in \Lambda_{\dive}$,
\[
  \bar j^*(\tau) = j^*(\tau).
\]
\end{lem}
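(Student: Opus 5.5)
We prove the two inequalities $\bar j^*\le j^*$ and $\bar j^*\ge j^*$ on $\Lambda_{\dive}$ separately, using only the definitions \eqref{Leg-Fen} and \eqref{Leg-Fen-wave}.

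\emph{Step 1: $\bar j^* \le j^*$ on all of $\Msn$.} Since $\bar j(\xi)$ is a supremum over $\tau \in \Lambda_{\dive}$ only, while $j = j^{**}$ (as $j$ is convex, continuous and quadratic) is the same supremum over all of $\Msn$, we get $\bar j \le j$ pointwise. Conjugation reverses inequalities, so $\bar j^* \ge j^*$ everywhere. This gives the inequality $\bar j^* \ge j^*$, not the one in the heading, so the plan must be adjusted.

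\emph{Step 2: $\bar j^* \le j^*$ on $\Lambda_{\dive}$.} By definition of $\bar j$, for every $\tau\in\Lambda_{\dive}$ and every $\xi\in\Msn$ we have the Fenchel-type inequality
\[
\bar j(\xi)\ge \dprb{\xi,\tau}-j^*(\tau),
\qquad\text{that is,}\qquad
\dprb{\xi,\tau}-\bar j(\xi)\le j^*(\tau).
\]
Taking the supremum over $\xi\in\Msn$ gives $\bar j^*(\tau)\le j^*(\tau)$ for all $\tau\in\Lambda_{\dive}$.

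\emph{Conclusion.} Combining Step 1 (valid on all of $\Msn$) with Step 2 (valid on $\Lambda_{\dive}$) yields $\bar j^*(\tau)=j^*(\tau)$ for every $\tau\in\Lambda_{\dive}$.

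Neither step is a real obstacle. The only point needing care is the direction of the inequalities: $\bar j\le j$ gives $\bar j^*\ge j^*$ globally, while the restriction $\tau\in\Lambda_{\dive}$ is used only in Step 2. No convexity of $\Lambda_{\dive}$ is needed, and $\Lambda_{\dive}$ is not convex, since it is the cone $\{\det=0\}$. The finiteness of $\bar j$ follows from $0\le\bar j\le j$, where $\bar j\ge 0$ comes from taking $\tau=0\in\Lambda_{\dive}$. So all quantities involved are real-valued and the manipulations above are legitimate.
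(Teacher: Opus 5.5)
Your proof is correct and is essentially the paper's argument with the indicator-function notation unpacked: your Step~2 is exactly the inequality $\bar j^*=(j^*+\chi_{\Lambda_{\dive}})^{**}\le j^*+\chi_{\Lambda_{\dive}}$, and your Step~1 ($\bar j\le j=j^{**}$, then conjugate) is the paper's $j^*=j^{***}\le (j^*+\chi_{\Lambda_{\dive}})^{**}=\bar j^*$. The only blemish is cosmetic: the heading of Step~1 should read $\bar j^*\ge j^*$ on all of $\Msn$, which is what that step actually proves.
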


\begin{proof}
{Observe first that $j^*=j^{***}$ by the convexity and lower semicontinuity of $j^*$.}

On the one hand, by definition of $\bar j^*$, we have {by standard results from convex analysis that}
\[
\bar j^*=(j^*+\chi_{\Lambda_{\dive}})^{**}\leq j^*+\chi_{\Lambda_{\dive}},
\]
where {$\chi_{\Lambda_{\dive}}$} denotes the indicator function of $\Lambda_{\dive}$, that is, $\chi_{\Lambda_{\dive}}(\tau):=0$ if $\tau\in \Lambda_{\dive}$ and $\chi_{\Lambda_{\dive}}(\tau):=+\infty$ otherwise. Hence,
\[
\bar j^*(\tau) \leq j^*(\tau)
\]
for every $\tau \in \Lambda_{\dive}$.
On the other hand,
\[
j^*\leq j^*+\chi_{\Lambda_{\dive}},
\]
and therefore,
\[
j^*=j^{***}\leq (j^*+\chi_{\Lambda_{\dive}})^{**}=\bar j^*.
\]
Consequently,
\[
\bar j^*(\tau)=j^*(\tau)
\]
for every $\tau\in\Lambda_{\dive}$.
\end{proof}
 By the definition $\bar j=(j^*+\chi_{\Lambda_{\dive}})^{*}$, together with~\eqref{eq:forallsloth} and~\cite[Equation (2.3)]{BabadjianIurlanoRindler23}, we also obtain that there exists $c>0$ such that \begin{equation} \label{eq:jbar_est} c^{-1} |\xi|^2 \leq \bar j(\xi) \leq c|\xi|^2, \qquad c^{-1} |\xi| \leq \rho^\circ(\xi) \leq c|\xi|, \qquad \xi \in \Msn. \end{equation} We are now ready to prove our generalization of~\cite[Lemma 2.4]{BabadjianIurlanoRindler23}.
\begin{lem}\label{Qxi}
For all $\tau \in \Lambda_{\dive}$ and all $\xi\in\Msn$ such that $\bar j(\xi)\leq \frac12$ one has
\[ 
  Q_\xi(\tau)\geq 0.
\]
\end{lem}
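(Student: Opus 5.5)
The plan is to reduce the claim to a Fenchel--Young inequality for the pair $(\bar j,\bar j^*)$ and then replace $\bar j^*$ by $j^*$ on the wave cone using Lemma~\ref{jbar}. Fix $\tau\in\Lambda_{\dive}$ and $\xi\in\Msn$ with $\bar j(\xi)\le\frac12$. By Lemma~\ref{jbar} we have $j^*(\tau)=\bar j^*(\tau)$, so it suffices to prove
\[
  \bar j^*(\tau)\ \ge\ \frac12\dprb{\xi,\tau}^2 .
\]

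The natural route goes through the representation~\eqref{eq:maxalpha}, namely $\bar j^*=\frac12(\rho^\circ)^2$. The condition $\bar j(\xi)\le\frac12$ is equivalent to $\rho(\xi)\le1$, since $\bar j=\frac12\rho^2$. By the definition of the polar function as a supremum over $\{\rho\le1\}$, we get $\dprb{\xi,\tau}\le\rho^\circ(\tau)$. Applying this also with $-\xi$, which is admissible because $\bar j$ is even (it is a supremum of functions that are even in the pair $(\xi,\tau)$, and both $\Lambda_{\dive}$ and $j^*$ are symmetric), we obtain $|\dprb{\xi,\tau}|\le\rho^\circ(\tau)$. Squaring then gives $\frac12\dprb{\xi,\tau}^2\le\frac12\rho^\circ(\tau)^2=\bar j^*(\tau)=j^*(\tau)$, and hence $Q_\xi(\tau)\ge0$.

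There is also a more elementary variant that avoids polar functions. By the Fenchel--Young inequality and $2$-homogeneity, for every $t\in\R$,
\[
  t\dprb{\xi,\tau}\le \bar j(t\xi)+\bar j^*(\tau)\le \frac{t^2}{2}+\bar j^*(\tau).
\]
Choosing $t=\dprb{\xi,\tau}$ yields the same bound. For negative $t$ this uses $\bar j(t\xi)=t^2\bar j(\xi)$, which again relies on the evenness of $\bar j$.

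The only potentially delicate point is this evenness, or equivalently the full $2$-homogeneity of $\bar j$ for negative scalars. It follows directly from~\eqref{Leg-Fen-wave}: substituting $\tau\mapsto-\tau$ preserves $\Lambda_{\dive}$, because $\det(-\tau)=(-1)^n\det\tau$ still vanishes, and it preserves $j^*$, which is quadratic. So the obstacle is minor, and the argument is essentially immediate once Lemma~\ref{jbar} is available.
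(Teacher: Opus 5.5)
Your proposal is correct and follows essentially the same route as the paper, which also uses the symmetry of $\{\bar j\le\frac12\}$ to get $\bar j^*(\tau)=\frac12\sup_{\bar j(\xi)\le 1/2}\dprb{\xi,\tau}^2$ via~\eqref{eq:maxalpha} and then concludes on $\Lambda_{\dive}$ with Lemma~\ref{jbar}. Your explicit check that $\bar j$ is even (which the paper takes for granted) and your Fenchel--Young variant (which avoids~\eqref{eq:maxalpha}) are both valid.
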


\begin{proof}
	Since the set $K:=\{\xi\in\Msn:\bar j(\xi)\leq 1/2\}$ is symmetric with respect to the origin, we have
	\[
	\sup_{\xi\in K} \bigl|\dprb{\xi,\tau}\bigr|
	=
	\sup_{\xi\in K}\dprb{\xi,\tau},
	\]
	so that
	\[
	\sup_{\xi\in K}\dprb{\xi,\tau}^2
	=
	\left(\sup_{\xi\in K}\dprb{\xi,\tau}\right)^2.
	\]
	For every $\tau \in \Msn$, we {thus derive from~\eqref{eq:maxalpha}} that \[ \bar j^*(\tau) =\frac12 \sup_{{\xi\in K}}\dprb{\xi,\tau}^2 =\sup_{{\xi\in K}}\left(j^*(\tau)-Q_\xi(\tau)\right). \]
    Therefore, for every fixed $\xi\in\Msn$ such that $\bar j(\xi)\leq \frac12$, {and every $\tau \in \Msn$,} \[ \bar j^*(\tau)\geq j^*(\tau)-Q_\xi(\tau). \] If now $\tau\in\Lambda_{\dive}$, then, by Lemma~\ref{jbar}, \[ Q_\xi(\tau)\geq j^*(\tau)-\bar j^*(\tau)=0, \] {proving the claim.}
\end{proof}
\subsection{The dual of $\Hrm^1$ modulo rigid deformations}
The following representation result is the same as~\cite[Proposition 2.5]{BabadjianIurlanoRindler23}, except that the final equality is {corrected to} the equivalence relation $\sim$. The proof is otherwise unchanged.
\begin{prop} \label{prop:riesz type theorem}
Let $\Omega\subset\R^n$ be a bounded Lipschitz domain. Let $g \in \Hrm^{-1}(\R^n;\R^n)$ with $\supp(g) \subset \overline\Omega$ and $g(r)=0$ for all $r \in \Rcal$. Then, there exists $G \in \Lrm^2(\Omega;\Msn)$ such that
\[
  \dprb{ g,v }=\int_\Omega \dprb{G,e(v)}\dd x, \qquad  v \in \Hrm^1(\R^n;\R^n)
\]
 and
\[
  \|G\|_{\Lrm^2(\Omega;\Msn)}  \sim  \|g\|_{\Hrm^{-1}(\R^n;\R^n)}.
\]

\end{prop}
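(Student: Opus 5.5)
The plan is to obtain $G$ from the Riesz representation theorem on the Hilbert space $\Hrm^1(\Omega;\R^n)$ modulo rigid motions, equipped with the inner product given by symmetric gradients. Korn's inequality is what makes this work.

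First, $g$ acts only through $v|_\Omega$. Since $\supp g\subset\overline\Omega$ and $\Omega$ is Lipschitz, I would show that $\dprb{g,v}$ depends only on $v|_{\Omega}$. Indeed, if $v\in \Hrm^1(\R^n;\R^n)$ vanishes a.e. in $\Omega$, then $v\in \Hrm^1_0(\R^n\setminus\overline\Omega)$, because Lipschitz domains have the segment property. Hence $v$ is an $\Hrm^1$-limit of test functions supported away from $\overline\Omega$, and therefore $\dprb{g,v}=0$. Next, fix a bounded extension operator $E\colon \Hrm^1(\Omega;\R^n)\to \Hrm^1(\R^n;\R^n)$ and set $\tilde g(u):=\dprb{g,Eu}$. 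This is a bounded functional on $\Hrm^1(\Omega;\R^n)$ with $\|\tilde g\|\le C\|g\|_{\Hrm^{-1}}$, it agrees with $g$ on restrictions, and it vanishes on $\Rcal|_\Omega$. The last point holds because $E r$ and any $\Hrm^1$ function equal to $r$ near $\overline\Omega$ have the same restriction, so $\tilde g(r)=g(r)=0$.

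Second, I would set up the Hilbert structure. Let $H:=\Hrm^1(\Omega;\R^n)/\Rcal$ with inner product $(u,w)_H:=\int_\Omega\dprb{e(u),e(w)}\dd x$. The second Korn inequality on connected Lipschitz domains, $\inf_{r\in\Rcal}\|u-r\|_{\Hrm^1(\Omega)}\le C\|e(u)\|_{\Lrm^2(\Omega)}$, shows that this is an inner product whose norm is equivalent to the quotient norm, so $H$ is complete. Since $\tilde g$ annihilates $\Rcal$, it descends to a bounded functional on $H$. By Riesz there is $u_0\in H$ with $\tilde g(u)=\int_\Omega\dprb{e(u_0),e(u)}\dd x$, and I would take $G:=e(u_0)\in \Lrm^2(\Omega;\Msn)$. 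This gives the representation formula.

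Third, the norm equivalence. The upper bound is $\|G\|_{\Lrm^2}=\|\tilde g\|_{H^*}\le C\|\tilde g\|_{(\Hrm^1(\Omega))^*}\le C\|g\|_{\Hrm^{-1}}$, again using Korn. The lower bound comes from Cauchy--Schwarz: $|\dprb{g,v}|\le\|G\|_{\Lrm^2}\|e(v)\|_{\Lrm^2(\Omega)}\le \|G\|_{\Lrm^2}\|v\|_{\Hrm^1(\R^n)}$.

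The main obstacle is the first step, namely that $g$ only sees $v|_\Omega$ and, in particular, that $g$ vanishes on $E r$ for $r\in\Rcal$. Here the hypotheses $\supp g\subset\overline\Omega$ and the Lipschitz regularity of $\Omega$ are genuinely used, via the density of $\Crm^\infty_c(\R^n\setminus\overline\Omega)$ in the $\Hrm^1$ functions vanishing on $\Omega$. This also explains why only an equivalence $\sim$ of norms, not an equality, can be expected: the quotient and extension norms differ from the $\Hrm^{-1}(\R^n)$ norm.
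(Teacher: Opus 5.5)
Your proposal is correct and takes essentially the same route as the paper, which reuses the proof of Proposition~2.5 in \cite{BabadjianIurlanoRindler23}: the support condition and a Lipschitz extension operator turn $g$ into a bounded functional on $\Hrm^1(\Omega;\R^n)$ that annihilates $\Rcal$; Korn's inequality makes $\int_\Omega\dprb{e(u),e(w)}\dd x$ an equivalent inner product on $\Hrm^1(\Omega;\R^n)/\Rcal$; and the Riesz representative gives $G=e(u_0)$. Your two-sided estimate, with the extension and Korn constants on one side and $|e(v)|\le|\nabla v|$ on the other, is exactly why one gets $\|G\|_{\Lrm^2(\Omega;\Msn)}\sim\|g\|_{\Hrm^{-1}(\R^n;\R^n)}$ rather than an equality.
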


\section{Lower bound} \label{sc:lower}

{The fundamental compactness result for our situation is the following:}

\begin{prop} \label{prop:compactness}
Assume that for all $\e > 0$ we are given
\[
  (\sigma_\e,\mu_\e) \in X_\e(\Omega)
\]
such that \[
  \sup_{\e > 0} \int_{\R^n}|\sigma_\e|^2\dd\mu_\e < \infty  \qquad\text{and}\qquad
  \sup_{\e > 0} \, \norm{\dive (\sigma_\e\mu_\e)}_{\Hrm^{-1}(\R^n;\R^n)} < \infty.
\]
Then, there exist a sequence $\{\e_k\}_{k\in \N}$ with $\e_k \todown 0$ and $(\sigma,\mu) \in X(\Omega)$ with $\dive(\sigma\mu) \in \Hrm^{-1}(\R^n;\R^n)$ such that
\[
 \begin{cases}
\mu_{\e_k} \wsto \mu &\text{in $\M(\R^n)$}, \\
\sigma_{\e_k} \mu_{\e_k} \wsto \sigma \mu  &\text{in $\M(\R^n;\Msn)$},\\
\dive(\sigma_{\e_k}\mu_{\e_k})\wto\dive(\sigma \mu) &\text{in }\Hrm^{-1}(\R^n;\R^n),
\end{cases}
\]
and
\[
\begin{cases}
  \sqrt{\e_k} \, \sigma_{\e_k} \mu_{\e_k} \wto 0  & \text{in $\Lrm^2(\R^n;\Msn)$},\\
  \dive( \sqrt{\e_k} \, \sigma_{\e_k} \mu_{\e_k}) \to 0  &\text{in $\Hrm^{-1}(\R^n;\R^n)$.}
  \end{cases}
\]
\end{prop}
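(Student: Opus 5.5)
We will extract the limits one by one from uniform bounds, using Banach--Alaoglu in duals of separable spaces and passing to a diagonal subsequence at the end.

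\emph{Compactness of the measures.} Since $\mu_\e\in\M^1(\overline\Omega)$ with $\overline\Omega$ compact, the family is tight and bounded in $\M(\R^n)$. Hence some subsequence satisfies $\mu_{\e_k}\wsto\mu$, and the limit $\mu$ is again a probability measure supported in $\overline\Omega$.

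\emph{Compactness of the stresses.} By Cauchy--Schwarz,
\[
|\sigma_\e\mu_\e|(\R^n)\le\Bigl(\int|\sigma_\e|^2\dd\mu_\e\Bigr)^{1/2}\mu_\e(\R^n)^{1/2}\le C,
\]
so after a further subsequence $\sigma_{\e_k}\mu_{\e_k}\wsto\lambda$ in $\M(\R^n;\Msn)$. To see that $\lambda=\sigma\mu$ with $\sigma\in\Lrm^2(\R^n,\mu;\Msn)$, we use the standard lower semicontinuity of the perspective functional $(\lambda,\mu)\mapsto\int|d\lambda/d\mu|^2\dd\mu$ (set to $+\infty$ if $\lambda\not\ll\mu$), for instance Reshetnyak's theorem. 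A self-contained alternative is the duality bound: for $\varphi\in\Crm_c(\R^n;\Msn)$,
\[
\int\varphi\dd(\sigma_\e\mu_\e)\le\Bigl(\int|\sigma_\e|^2\dd\mu_\e\Bigr)^{1/2}\Bigl(\int|\varphi|^2\dd\mu_\e\Bigr)^{1/2}.
\]
Passing to the limit gives $|\int\varphi\dd\lambda|\le C\|\varphi\|_{\Lrm^2(\mu)}$. Riesz representation in $\Lrm^2(\mu)$ then yields $\lambda=\sigma\mu$ with $\|\sigma\|_{\Lrm^2(\mu)}\le C$, so $(\sigma,\mu)\in X(\Omega)$.

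\emph{Divergences.} Since $\dive(\sigma_\e\mu_\e)$ is bounded in the Hilbert space $\Hrm^{-1}$, a further subsequence converges weakly to some $h$. Testing with $v\in\Crm^\infty_c$ and using the weak* convergence of the measures shows $h=\dive(\sigma\mu)$ in $\Dcal'$. Therefore $\dive(\sigma\mu)=h\in\Hrm^{-1}$ and the weak convergence holds.

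\emph{The rescaled fields.} Since $\mu_\e=\e^{-1}\LL^n\res\omega_\e$, the measure $\sqrt\e\,\sigma_\e\mu_\e$ is the function $\e^{-1/2}\sigma_\e\chi_{\omega_\e}$, whose squared $\Lrm^2$ norm is
\[
\e^{-1}\int_{\omega_\e}|\sigma_\e|^2\dd x=\int|\sigma_\e|^2\dd\mu_\e\le C.
\]
So these functions are bounded in $\Lrm^2$ and have a weak limit along a subsequence. As measures they equal $\sqrt{\e}$ times a sequence with bounded total variation, so they tend to $0$ weakly* in $\M$. Hence the $\Lrm^2$ weak limit is $0$. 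Finally,
\[
\|\dive(\sqrt\e\sigma_\e\mu_\e)\|_{\Hrm^{-1}}=\sqrt\e\,\|\dive(\sigma_\e\mu_\e)\|_{\Hrm^{-1}}\le C\sqrt\e\to0,
\]
which is the strong convergence. Choosing a common diagonal subsequence gives all the claims simultaneously.

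The only mildly delicate step is identifying the weak* limit $\lambda$ as $\sigma\mu$ with $\sigma\in\Lrm^2(\mu)$, since concentration could a priori make $\lambda$ singular with respect to $\mu$. The duality estimate above is what I expect to handle this: it controls $\lambda$ by $\mu$ in $\Lrm^2$ sense and rules out such concentration.
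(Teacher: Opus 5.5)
Your proof is correct and is essentially the standard argument of Babadjian--Iurlano--Rindler (their Proposition~3.1), to which the paper defers. That argument consists of: weak* compactness of $\mu_\e$ and of $\sigma_\e\mu_\e$ (bounded in mass by Cauchy--Schwarz); identification of the limit as $\sigma\mu$ with $\sigma\in\Lrm^2(\R^n,\mu;\Msn)$; weak compactness of the divergences in $\Hrm^{-1}$, identified by testing with smooth fields; and the identity $\|\sqrt{\e}\,\sigma_\e\mu_\e\|_{\Lrm^2}^2=\int|\sigma_\e|^2\dd\mu_\e$ together with the $\sqrt{\e}$-scaling of the $\Hrm^{-1}$ norm. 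Your duality/Riesz step is a clean, self-contained way to get $\lambda=\sigma\mu$ (and incidentally $\|\sigma\|_{\Lrm^2(\mu)}\leq\liminf_k\|\sigma_{\e_k}\|_{\Lrm^2(\mu_{\e_k})}$) without invoking Reshetnyak-type lower semicontinuity.
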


\begin{proof}
See ~\cite[Proposition~3.1]{BabadjianIurlanoRindler23}.
\end{proof}

We now prove the lower bound. The proof follows the argument of~\cite{BabadjianIurlanoRindler23}, with the use of the extended convex analysis lemmas proved above.

\begin{prop}\label{prop:liminf}
Let $\mu \in \mathcal M^1(\overline\Omega)$ and $\{\mu_\e\}_{\e>0}$ be a family in $\M^1(\overline\Omega)$ such that $\mu_\e \wsto \mu$ in $\mathcal M(\R^n)$. Then,
$$\overline{\mathscr C}(\mu) \leq \liminf_{\e \todown 0}\mathscr C_\e(\mu_\e).$$
\end{prop}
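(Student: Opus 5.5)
We may assume the $\liminf$ is finite; passing to a subsequence, it is a limit, and $\mathscr C_{\e}(\mu_{\e})<\infty$ along it. Then $\mu_\e = \e^{-1}\LL^n\res\omega_\e$ with $\omega_\e\in\A_\e$, and we choose $\sigma_\e$ admissible in~\eqref{eq:Ceps} with $\mathscr E_\e(\sigma_\e,\mu_\e)\le \mathscr C_\e(\mu_\e)+\e$. By~\eqref{eq:alpha} we get $\sup_\e\int|\sigma_\e|^2\dd\mu_\e<\infty$. Moreover $\dive(\sigma_\e\mu_\e)=-f$ is fixed in $\Hrm^{-1}$, since $f\in \Lrm^2(\partial\Omega)$ and the trace map is bounded. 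Proposition~\ref{prop:compactness} therefore applies and gives a subsequence and $(\sigma,\mu)\in X(\Omega)$ with $\sigma_{\e_k}\mu_{\e_k}\wsto\sigma\mu$; the limit measure is the given $\mu$ by uniqueness of weak* limits. Passing to the limit in the distributional equation yields $-\dive(\sigma\mu)=f$, so $\sigma$ is admissible for $\overline{\mathscr C}(\mu)$. It thus suffices to show
\[
\int \bar j^*(\sigma)\dd\mu\le\liminf_k\int j^*(\sigma_{\e_k})\dd\mu_{\e_k}.
\]

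Set $\tau_k:=\sqrt{\e_k}\,\sigma_{\e_k}\mu_{\e_k}$, viewed as an $\Lrm^2$ function, supported in $\overline\Omega$. Because $\mu_{\e_k}=\e_k^{-1}\mathbbm 1_{\omega_{\e_k}}$, we have pointwise
\[
j^*(\tau_k)=\e_k\,\e_k^{-2}j^*(\sigma_{\e_k})\mathbbm 1_{\omega_{\e_k}}=j^*(\sigma_{\e_k})\mu_{\e_k},
\]
and $\dprb{\xi,\tau_k}^2=\e_k\dprb{\xi,\sigma_{\e_k}}^2\mu_{\e_k}^2/\e_k\cdot$ similarly, so that $\frac12\dprb{\xi,\tau_k}^2\dd x=\frac12\dprb{\xi,\sigma_{\e_k}}^2\dd\mu_{\e_k}$. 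Proposition~\ref{prop:compactness} gives $\tau_k\wto0$ in $\Lrm^2$ and $\dive\tau_k\to0$ in $\Hrm^{-1}$, and $Q_\xi(\tau_k)$ is bounded in $\Lrm^1$. Hence, for every fixed $\xi$ with $\bar j(\xi)\le\frac12$, Lemma~\ref{Qxi} and Lemma~\ref{lem:sloth sympageia} give, for $\varphi\in\Crm_c^\infty$,
\[
\liminf_k\int\varphi^2 j^*(\sigma_{\e_k})\dd\mu_{\e_k}\ \ge\ \liminf_k\int\varphi^2\tfrac12\dprb{\xi,\sigma_{\e_k}}^2\dd\mu_{\e_k}.
\]
We take $\varphi\equiv1$ near $\overline\Omega$ on the left. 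To lower-bound the right-hand side we use $\frac12 a^2\ge ab-\frac12 b^2$ with $b=\psi(x)$ for $\psi\in\Crm_c(\R^n)$. This gives
\[
\int\tfrac12\dprb{\xi,\sigma_{\e_k}}^2\dd\mu_{\e_k}\ge\int\Bigl(\psi\dprb{\xi,\sigma_{\e_k}}-\tfrac12\psi^2\Bigr)\dd\mu_{\e_k}\ \longrightarrow\ \int\Bigl(\psi\dprb{\xi,\sigma}-\tfrac12\psi^2\Bigr)\dd\mu,
\]
by the weak* convergences of $\sigma_{\e_k}\mu_{\e_k}$ and $\mu_{\e_k}$.

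The main obstacle is that $\xi$ must vary with $x$ in order to realize $\bar j^*(\sigma)=\frac12\sup_{\xi\in K}\dprb{\xi,\sigma}^2$ pointwise, whereas Lemma~\ref{lem:sloth sympageia} is stated for a fixed quadratic form. We resolve this by localization. Take a finite family $\xi_1,\dots,\xi_m\in K$ and a partition of unity $\varphi_i^2$ with $\sum_i\varphi_i^2\le1$. Applying the fixed-$\xi$ inequality to each pair $(\varphi_i,\xi_i)$ and summing, using superadditivity of $\liminf$, gives
\[
L:=\liminf_k\int j^*(\sigma_{\e_k})\dd\mu_{\e_k}\ \ge\ \sum_i\int\varphi_i^2\Bigl(\psi_i\dprb{\xi_i,\sigma}-\tfrac12\psi_i^2\Bigr)\dd\mu .
\]
Here the elementary inequality is applied inside the weight $\varphi_i^2$, so the left side is $\sum_i\int\varphi_i^2 j^*\le\int j^*$.

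Finally we take the supremum over $\psi_i\in\Crm_c$, over $\xi_i\in K$ and over partitions. The integrand $\sup_{\xi\in K,\,b}\bigl(b\dprb{\xi,\sigma(x)}-\frac12b^2\bigr)=\bar j^*(\sigma(x))$ is a supremum of a countable family of functions affine in $\sigma$, indexed by a dense set of $\xi$ in the compact $K$ and rational $b$. A standard measurable-selection and Lusin approximation argument, as in the localization lemma for integrands $\sup_i g_i$ with respect to $\mu$, lets the supremum pass inside the integral. We approximate $\sigma\in\Lrm^2(\mu)$ by continuous fields and use continuity of $\rho^\circ$ to choose almost-optimal $\xi(x)$ piecewise constant on small cells. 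This yields $\int\bar j^*(\sigma)\dd\mu\le L$, and hence $\overline{\mathscr C}(\mu)\le\liminf_\e\mathscr C_\e(\mu_\e)$.
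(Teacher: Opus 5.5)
Your proof is correct, and its core matches the paper's. Both use the same near-optimal stresses, Proposition~\ref{prop:compactness}, the rescaled fields $\tau_k=\sqrt{\e_k}\,\sigma_{\e_k}\mu_{\e_k}$, and Lemma~\ref{lem:sloth sympageia} applied to $Q_\xi$ for a fixed $\xi\in K$ via Lemma~\ref{Qxi}. You differ in how $\xi$ is made to depend on $x$.

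The paper introduces the weak* limit $\gamma$ of the energy measures $j^*(\sigma_k)\mu_k$ and blows up at $\mu$-a.e.\ point $x_0$, using Lebesgue points of $\sigma$, existence of $\frac{\di\gamma}{\di\mu}(x_0)$, and balls with $\gamma(\partial B_{\varrho_j}(x_0))=0$. This gives $\frac{\di\gamma}{\di\mu}(x_0)\geq\frac12\dprb{\xi,\sigma(x_0)}^2$ for every fixed $\xi$ on a full-measure set independent of $\xi$, and the supremum over $\xi$ is then taken pointwise. You instead work globally. You sum the fixed-$\xi$ inequality over finitely many cutoffs with $\sum_i\varphi_i^2\leq1$ and recover the pointwise supremum at the end by a localization argument.

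What each route buys:
\begin{itemize}
\item The paper's route gives the sharper local statement $\frac{\di\gamma}{\di\mu}\geq\bar j^*(\sigma)$ $\mu$-a.e.\ and makes the $x$-dependence of $\xi$ trivial. The cost is differentiation of measures and a lower semicontinuity step quoted from earlier work.
\item Your route needs neither $\gamma$ nor differentiation. Your linearization $\frac12a^2\geq ab-\frac12b^2$ with $b=\psi\in\Crm_c$ gives a self-contained proof of the lower semicontinuity step. The cost is the final approximation, which you only sketch.
\end{itemize}

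That approximation does go through, for example as follows.
\begin{enumerate}
\item Take $\{\xi^{(m)}\}$ dense in the compact set $K$, so that $g_M:=\max_{m\leq M}\frac12\dprb{\xi^{(m)},\sigma}^2\uparrow\bar j^*(\sigma)$. By monotone convergence it suffices to treat each $g_M$.
\item Split $\overline\Omega$ into the Borel sets $E_m$ on which index $m$ attains the maximum. Take disjoint compact sets $K_m\subset E_m$ of almost full $\mu$-measure.
\item Choose smooth cutoffs $\varphi_m$ with pairwise disjoint supports and $\varphi_m=1$ on $K_m$, so that $\sum_m\varphi_m^2\leq1$.
\item Choose $\psi_m\in\Crm_c$ close to $\dprb{\xi^{(m)},\sigma}$ in $\Lrm^2(\mu)$, and use $\psi a-\frac12\psi^2=\frac12a^2-\frac12(a-\psi)^2$ to control the error.
\end{enumerate}

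Your displayed intermediate expression for $\dprb{\xi,\tau_k}^2$ is garbled. The resulting identity $\frac12\dprb{\xi,\tau_k}^2\,\dd x=\frac12\dprb{\xi,\sigma_{\e_k}}^2\,\dd\mu_{\e_k}$ is nonetheless correct, since $\mu_{\e_k}^2=\e_k^{-1}\mu_{\e_k}$ as densities.
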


\begin{proof}
If $\liminf_{\e \todown 0} \mathscr C_\e(\mu_\e)=\infty$, the inequality is trivial. We may therefore assume that this $\liminf$ is finite. Arguing exactly as in~\cite{BabadjianIurlanoRindler23}, we extract a sequence $\e_k\todown0$, set $\mu_k:=\mu_{\e_k}$, and find $\sigma_k\in \Lrm^2(\R^n,\mu_k;\Msn)$ and $\omega_k\in\A_{\e_k}$ such that \[ -\dive(\sigma_k\mu_k)=f \quad\text{in }\Dcal'(\R^n;\R^n), \qquad \mu_k=\frac{\LL^n\res\omega_k}{\e_k}, \] and \[ \liminf_{k\to\infty}\int_\Omega j^*(\sigma_k)\dd\mu_k\leq\lim_{k\to\infty}\mathscr C_{\e_k}(\mu_k) = \liminf_{\e\todown0}\mathscr C_\e(\mu_\e).\]
 Moreover, up to a subsequence, there exists $\sigma\in \Lrm^2(\R^n,\mu;\Msn)$ with \[ -\dive(\sigma\mu)=f \quad\text{in }\Dcal'(\R^n;\R^n), \qquad \sigma_k\mu_k\wsto\sigma\mu \quad\text{in }\M(\R^n;\Msn), \] and, setting $\tau_k:=\sqrt{\e_k}\,\sigma_k\mu_k$, \[ \tau_k\rightharpoonup0 \quad\text{weakly in }\Lrm^2(\R^n;\Msn), \qquad \dive\tau_k\to0 \quad\text{strongly in }\Hrm^{-1}(\R^n;\R^n). \] Let \[ \gamma_k:=j^*(\sigma_k)\mu_k\res\Omega. \] Then, up to a further subsequence, $\gamma_k\wsto\gamma$ in $\M(\R^n)$ for some $\gamma\in\M^+(\overline\Omega)$, and \[ \gamma(\R^n) \leq \liminf_{\e\todown0}\mathscr C_\e(\mu_\e). \] It remains to prove that \[ \frac{\di\gamma}{\di\mu}(x)\geq \bar j^*(\sigma(x)) \quad\text{for $\mu$-a.e. }x\in\overline\Omega. \]
 
 {Fix $x_0$ in the set of $\Lrm^2$-Lebesgue points of $\sigma$ with respect to $\mu$ and such that the Radon--Nikod\'ym derivative 
$\frac{\di \gamma}{\di\mu}(x_0)$ exists and is finite,
 and choose radii $\varrho_j\todown0$ such that $\gamma(\partial B_{\varrho_j}(x_0))=0$. By standard results of measure theory, this is possible for $x_0$ from a set of full $\mu$-measure.} Let $\xi\in\Msn$ with $\rho(\xi)\leq1$ and let $\varphi\in\Crm_c^\infty(B_{\varrho_j}(x_0))$, $0\leq\varphi\leq1$. By Lemma~\ref{Qxi}, \[ Q_\xi(\tau)\geq0 \qquad\text{for every }\tau\in\Lambda_{\dive}. \] Therefore, applying Lemma~\ref{lem:sloth sympageia} to the sequence $\tau_k$ and to the quadratic form $Q_\xi$, we obtain \[ \liminf_{k\to\infty} \int_{B_{\varrho_j}(x_0)}\varphi^2 Q_\xi(\tau_k)\dd x \geq0. \] Since \[ j^*(\tau)=Q_\xi(\tau)+\frac12\dprb{\xi,\tau}^2, \] we infer, exactly as in~\cite{BabadjianIurlanoRindler23}, that \[ \gamma(B_{\varrho_j}(x_0)) \geq \liminf_{k\to\infty} \int_{B_{\varrho_j}(x_0)} \frac{\varphi^2}{2}\dprb{\xi,\sigma_k}^2\dd\mu_k. \] The remaining lower semicontinuity argument is unchanged and gives \[ \gamma(B_{\varrho_j}(x_0)) \geq \int_{B_{\varrho_j}(x_0)} \frac{\varphi^2}{2}\dprb{\xi,\sigma}^2\dd\mu. \] Letting $\varphi\uparrow1$ in $B_{\varrho_j}(x_0)$, dividing by $\mu(B_{\varrho_j}(x_0))$, and then letting $j\to\infty$, we obtain \[ \frac{\di\gamma}{\di\mu}(x_0) \geq \frac12\dprb{\xi,\sigma(x_0)}^2. \] Taking the supremum over all $\xi\in\Msn$ with $\rho(\xi)\leq1$ and using~\eqref{eq:maxalpha}, we get \[ \frac{\di\gamma}{\di\mu}(x_0) \geq \bar j^*(\sigma(x_0)). \] Hence \[ \gamma(\R^n) \geq \int_{\overline\Omega}\bar j^*(\sigma)\dd\mu. \] Combining the previous inequalities, we conclude that \[ \liminf_{\e\todown0}\mathscr C_\e(\mu_\e) \geq \gamma(\R^n) \geq \int_{\overline\Omega}\bar j^*(\sigma)\dd\mu = \overline{\mathscr E}(\sigma,\mu) \geq \overline{\mathscr C}(\mu), \] which proves the lower bound.
\end{proof}

\begin{rem}\label{rem:liminf}
Let $A_\e \subset \Omega$ be Lebesgue measurable sets {and let} $\{\mu_\e=\frac{1}{\e}\LL^n \res A_\e\}_{\e>0}$ be a family in $\M^+(\R^n)$ such that $\mu_\e \wsto \mu$ in $\M(\R^n)$ for some $\mu \in \M^+(\R^n)$. Assume that $\sigma_\e \in \Lrm^2(\R^n,\mu_\e;\Msn)$ is such that $\sigma_\e\mu_\e \wsto \sigma\mu $ in $\M(\R^n;\Msn)$
and $\dive(\sigma_\e\mu_\e) \wto \dive(\sigma\mu)$ in $\Hrm^{-1}(\R^n;\R^n)$ for some $\sigma\in \Lrm^2(\R^n,\mu;\Msn)$. Then, as in~\cite[Remark 3.4]{BabadjianIurlanoRindler23}, one can adapt the above arguments to establish the lower bound
$$\liminf_{\e \todown 0}\int_{\R^n} \ j^*(\sigma_\e)\dd \mu_\e \geq \int_{\R^n}\bar j^*(\sigma)\dd\mu.$$
\end{rem}

\begin{rem}\label{rem:comparison}
The lower-bound argument is closely related to Bouchitt\'{e}'s vanishing mass conjecture. In its original formulation, this conjecture concerns an $\Lrm^2$-type concentration regime, which is the scaling naturally arising in our problem. In~\cite{GennaioliRindler26}, an $\Lrm^1$-version of the vanishing-mass principle is proved. Although the two formulations are conceptually very close, the result of~\cite{GennaioliRindler26} does not cover, in its current form, the lower bound needed here, because of the different scaling and the additional difficulty created by a general right-hand side $f$.
\end{rem}

\section{Relaxation of the general Kohn--Strang functional} \label{sc:relax}

{In this section we will compute a pointwise relaxation of the so-called Kohn--Strang functional, which will form the basis of the subsequent analysis. Unlike in~\cite{BabadjianIurlanoRindler23}, no explicit formula for the relaxation is available and so we need to content ourselves with an implicit relaxation formula.}

\subsection{Hooke's laws.}
If $\mathbb{A}$ is a symmetric elliptic fourth order tensor, then it is invertible, and $\mathbb{A} ^{-1}$, $\mathbb{A} ^{1/2} $ and $\mathbb{A}^{-1/2} $ are also symmetric and elliptic. {We refer to such a tensor $\mathbb A$ as a \emph{Hooke's law}.}

We extend $\mathbb A$ to the complexification of $\Msn$ by setting \[ \mathbb A(\xi+\ii \eta):=\mathbb A\xi+\ii\mathbb A\eta, \qquad \xi,\eta\in\Msn. \] For two Hooke's laws $\mathbb A$ and $\mathbb B$, we write $\mathbb A\leq\mathbb B$ if \[ \dprb{\mathbb A \xi, \xi}\leq\dprb{\mathbb B \xi,\xi} \qquad\text{for every }\xi\in\Msn, \] and $\mathbb A<\mathbb B$ if \begin{equation}\label{anisotita telestwn no 2} \dprb{\mathbb A \xi,\xi}<\dprb{\mathbb B \xi,\xi} \qquad\text{for every }\xi\in\Msn,\ \xi\neq0. \end{equation}

\begin{defn}
\label{defn:vasikotatos orismos}
Let $\mathbb A$ and $\mathbb B$ be two Hooke's laws with $\mathbb A<\mathbb B$, let $\theta\in[0,1]$, let $v$ be a unit vector in $\R^n$, and let $R\in SO(n)$ be such that $Re_{i_0}=v$ for some $i_0\in\{1,\dots,n\}$. We set $Q_{v,R}:=RQ$. 
Let $\chi\colon\R^n\to\{0,1\}$ be a Borel function such that $\chi(y+Re_j)=\chi(y)$ for every $j=1,\dots,n$ and for a.e. $y\in\R^n$, and \[ \int_{Q_{v,R}}\chi(y)\dd y=\theta. \]

We define the Hooke's law $\mathbb H(y)$  by {\[ \mathbb H(y)\alpha:=\chi(y)\mathbb A\alpha+(1-\chi(y))\mathbb B\alpha, \qquad \alpha\in\Msn, \]} whose inverse is \[ \mathbb H(y)^{-1}=\chi(y)\mathbb A^{-1}+(1-\chi(y))\mathbb B^{-1}. \] 
Let $\mathbb L_{\mathbb A,\mathbb B,\theta,v,R,\chi}$ be the complementary Hooke's law associated with $\mathbb A,\mathbb B,\theta,v,R,\chi$, that is,
\begin{equation}\label{L}\dprb{\mathbb{L}_{\mathbb{A},\mathbb{B},\theta,v,R,\chi}(\xi),\xi} := \inf_{\zeta}{\int_{Q_{v,R}}}\dprb{\mathbb{H}(y)^{-1} \zeta(y),\zeta(y)}\dd y,
\end{equation} where $\zeta\in \Lrm^2_\loc(\R^n,\Msn)$ {satisfying the \emph{admissibility conditions} $$\dive\zeta=0 \qquad\text{in the weak sense,}$$ $$\int_{Q_{v,R}}\zeta(y) \dd y=\xi,$$ and $$\zeta(y+Re_{j})=\zeta(y) \qquad\text{for all $j \in \left\{1,2,...,n\right\}$ and almost all $y \in \R^n.$} $$} By a standard density argument, it is enough to take the infimum over smooth periodic admissible fields $\zeta$ satisfying the same divergence-free and average constraints. We will write simply $\mathbb{L}$ when {the parameters are clear from the context.}
\end{defn}

\begin{rem}\label{rem:basic properties} Under the notation of Definition~\ref{defn:vasikotatos orismos}, the $k$-th Fourier coefficient of $\zeta$ for $k \in \mathbb{Z}^{n}$ is $$\widehat{\zeta}(k) =\int_{Q_{v,R}}\zeta(y)e^{-2\pi i  Rk\cdot y} \dd y.$$ Then, $\dive\zeta=0$ in the weak sense if and only if $\widehat{\zeta}(k) Rk =0$ for every $k\in\mathbb{Z}^{n}.$
\end{rem}

For every $k\in\R^n\setminus\{0\}$, we set \[{Z(k)}:=\setb{A\in\Msn}{Ak=0}, \qquad {V(\mathbb B,k)}:=\mathbb B^{-1/2}(Z(k)). \]

\begin{prop} \label{prop:sxesi 6.9}
Let $\mathbb A$ and $\mathbb B$ be two Hooke's laws with $\mathbb A<\mathbb B$, and let $v$, $R$, $\chi$, and $\theta\in[0,1]$ be as above. For $h\in\Msn$, set \[ g_c(h):=\sup_{|k|=1}\left|\Pi_{V(\mathbb B,k)}\mathbb B^{1/2}h\right|^2, \] {where we recall that $\Pi_{V(\mathbb B,k)}$ denotes the orthogonal projection on the subspace $V(\mathbb B,k)$.} Then, for every $\xi\in\Msn$, \[ \dprb{\mathbb L \xi,\xi} \geq \dprb{\mathbb B^{-1} \xi,\xi}+S(\xi), \] where \[ S(\xi):= \theta\sup_{h\in\Msn} \bigl\{ 2\dprb{h,\xi} - \dprb{(\mathbb A^{-1}-\mathbb B^{-1})^{-1}h,h} - (1-\theta)g_c(h) \bigr\}. \]

\end{prop}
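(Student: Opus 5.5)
The plan is the classical Hashin--Shtrikman variational argument: split $\mathbb H^{-1}$ into a constant part plus a perturbation, bound the perturbation from below by a pointwise convex-duality inequality, and then minimize the remaining quadratic functional exactly in Fourier variables. Throughout, set $\mathbb M:=\mathbb A^{-1}-\mathbb B^{-1}$. Since $\mathbb A<\mathbb B$, we have $\mathbb A^{-1}>\mathbb B^{-1}$, so $\mathbb M$ is symmetric positive definite and invertible. Fix $\xi$ and an arbitrary $h\in\Msn$. By the remark after Definition~\ref{defn:vasikotatos orismos}, it suffices to prove the inequality for every smooth admissible $\zeta$ and then take the infimum in~\eqref{L}.

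\emph{Step 1 (pointwise duality).} Write $\mathbb H(y)^{-1}=\mathbb B^{-1}+\chi(y)\mathbb M$. For a positive definite $\mathbb M$, completing the square gives $\dprb{\mathbb M\zeta,\zeta}\geq 2\dprb{h,\zeta}-\dprb{\mathbb M^{-1}h,h}$. Multiplying by $\chi$ and integrating over $Q_{v,R}$, which has unit volume, we obtain
\[
\int_{Q_{v,R}}\dprb{\mathbb H^{-1}\zeta,\zeta}\dd y\geq \int_{Q_{v,R}}\Bigl(\dprb{\mathbb B^{-1}\zeta,\zeta}+2\chi\dprb{h,\zeta}\Bigr)\dd y-\theta\dprb{\mathbb M^{-1}h,h}.
\]
Here we used $\int_{Q_{v,R}}\chi=\theta$.

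\emph{Step 2 (Fourier minimization).} Expand $\zeta(y)=\xi+\sum_{k\neq0}\widehat\zeta(k)e^{2\pi\ii Rk\cdot y}$ and $\chi(y)=\theta+\sum_{k\neq0}\widehat\chi(k)e^{2\pi\ii Rk\cdot y}$. By Remark~\ref{rem:basic properties}, $\widehat\zeta(k)\in Z(Rk)_{\Cbb}$. Plancherel on the rotated lattice, i.e.\ on $Q_{v,R}$, turns the integral on the right into
\[
\dprb{\mathbb B^{-1}\xi,\xi}+2\theta\dprb{h,\xi}+\sum_{k\neq0}\Bigl(\dprb{\mathbb B^{-1}\widehat\zeta(k),\overline{\widehat\zeta(k)}}+2\,\mathrm{Re}\bigl(\widehat\chi(k)\,\overline{\dprb{h,\widehat\zeta(k)}}\bigr)\Bigr).
\]
We bound each mode separately. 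Put $w:=\mathbb B^{-1/2}\widehat\zeta(k)$. Since $Z(Rk)$ depends only on the direction $Rk/|Rk|$, we have $w\in V(\mathbb B,Rk/|Rk|)_{\Cbb}$. Using $\dprb{h,\widehat\zeta}=\dprb{\mathbb B^{1/2}h,w}=\dprb{\Pi_V\mathbb B^{1/2}h,w}$, the $k$-th term equals
\[
|w|^2+2\,\mathrm{Re}\bigl(\widehat\chi(k)\,\overline{\dprb{\Pi_V\mathbb B^{1/2}h,w}}\bigr).
\]
By Cauchy--Schwarz and completing the square, this is at least $-|\widehat\chi(k)|^2\,|\Pi_V\mathbb B^{1/2}h|^2$, which in turn is at least $-|\widehat\chi(k)|^2 g_c(h)$.

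\emph{Step 3 (summation).} Because $\chi^2=\chi$, Parseval gives $\sum_{k\neq0}|\widehat\chi(k)|^2=\int\chi^2-\theta^2=\theta(1-\theta)$. Combining this with Steps 1 and 2 yields
\[
\int_{Q_{v,R}}\dprb{\mathbb H^{-1}\zeta,\zeta}\dd y\geq\dprb{\mathbb B^{-1}\xi,\xi}+\theta\Bigl(2\dprb{h,\xi}-\dprb{\mathbb M^{-1}h,h}-(1-\theta)g_c(h)\Bigr).
\]
Taking the infimum over admissible $\zeta$ and then the supremum over $h$ gives the claim.

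I expect the main obstacle to be Step 2. The complex Fourier bookkeeping has to be done with care: on the rotated lattice, the divergence constraint must correctly place $\widehat\zeta(k)$ in the complexified space $Z(Rk)_{\Cbb}$, the orthogonal projection onto $V(\mathbb B,\cdot)$ must be compatible with complexification, and one must check that the cross terms between $k$ and $-k$ combine into real parts, which follows from the reality of $\zeta$ and $\chi$.
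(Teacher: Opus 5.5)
Your proposal is correct and follows the same Hashin--Shtrikman argument as the paper's sketch. Both use the Legendre--Fenchel inequality for $\mathbb A^{-1}-\mathbb B^{-1}$ on the phase $\{\chi=1\}$, Plancherel on the rotated cell with $\mathbb B^{-1/2}\widehat\zeta(k)\in V(\mathbb B,Rk)_{\mathbb C}$, a mode-by-mode lower bound $-|\widehat\chi(k)|^2|\Pi_{V(\mathbb B,Rk)}\mathbb B^{1/2}h|^2\geq -|\widehat\chi(k)|^2 g_c(h)$, and Parseval to get $\theta(1-\theta)$. The only difference is that you get the per-mode bound from Cauchy--Schwarz instead of exhibiting the exact minimizer $\gamma_k^*=-\widehat\chi(k)\Pi_{V(\mathbb B,Rk)}\mathbb B^{1/2}h$; that suffices for this inequality, since the exact value is only used later, in the one-dimensional equality case.
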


\begin{proof}
The proof is given in~\cite[Proposition 2.1]{ AllaireKohn93b }. We sketch it for the sake of completeness. We write $Q_v$ instead of $Q_{v,R}$, and $\zeta$ always denotes an admissible field in the sense of Definition~\ref{defn:vasikotatos orismos}. Set \[{\mathbb D}:=\mathbb A^{-1}-\mathbb B^{-1}. \] Since $\mathbb A<\mathbb B$, the operator $\mathbb D$ is positive definite. Hence, for every $h\in\Msn$ and every $\eta\in\Msn$, {by the Legendre--Fenchel inequality,} \[ \dprb{\mathbb D\eta,\eta} \geq 2\dprb{h,\eta}-\dprb{\mathbb D^{-1}h,h}. \] Using the definition of $\mathbb L$ and applying this inequality with $\eta=\zeta(y)$ on the set $\{\chi=1\}$, we obtain \[ \begin{aligned} \dprb{\mathbb L \xi,\xi} &\geq \inf_{\zeta} \int_{Q_v} \left( 2\chi(y)\dprb{h,\zeta(y)} + \dprb{\mathbb B^{-1}\zeta(y),\zeta(y)} \right)\dd y - \theta\dprb{\mathbb D^{-1}h,h}. \end{aligned} \] We estimate the infimum. Since $\mathbb B^{-1/2}$ is symmetric, Plancherel's formula gives \[ \begin{aligned} &\int_{Q_v} \left( 2\chi(y)\dprb{h,\zeta(y)} + \dprb{\mathbb B^{-1}\zeta(y),\zeta(y)} \right)\dd y \\ &\qquad = \sum_{k\in\mathbb Z^n} \left( 2\operatorname{Re} \left[ \overline{\widehat\chi(k)} \dprb{\mathbb B^{1/2}h,\gamma_k} \right] + |\gamma_k|^2 \right), \end{aligned} \] where \[ \gamma_k:=\mathbb B^{-1/2}\widehat\zeta(k). \] The admissibility conditions imply \[ \gamma_0=\mathbb B^{-1/2}\xi, \qquad \gamma_k\in (V(\mathbb B, Rk))_{\mathbb C} \quad\text{for }k\neq0. \] Conversely, these conditions characterize the admissible Fourier modes. Therefore, the infimum can be computed mode by mode. The mode $k=0$ gives {the contribution \[ 2\theta\dprb{h,\xi} + |\mathbb B^{-1/2}\xi|^2 = 2\theta\dprb{h,\xi} + \dprb{\mathbb B^{-1}\xi,\xi}. \]} For $k\neq0$, the minimum over $(V(\mathbb B, Rk))_{\mathbb C}$ is attained at \[ \gamma_k^* = -\widehat\chi(k)\Pi_{V(\mathbb B, Rk)}\mathbb B^{1/2}h \] and is equal to \[ -|\widehat\chi(k)|^2 \left|\Pi_{V(\mathbb B, Rk)}\mathbb B^{1/2}h\right|^2. \] Thus, \[ \begin{aligned} &\inf_{\zeta} \int_{Q_v} \left( 2\chi(y)\dprb{h,\zeta(y)} + \dprb{\mathbb B^{-1}\zeta(y),\zeta(y)} \right)\dd y \\ &\qquad = \dprb{\mathbb B^{-1}\xi,\xi} + 2\theta\dprb{h,\xi} - \sum_{k\neq0} |\widehat\chi(k)|^2 \left|\Pi_{V(\mathbb B, Rk)}\mathbb B^{1/2}h\right|^2. \end{aligned} \] By the definition of $g_c$, \[ \left|\Pi_{V(\mathbb B, Rk)}\mathbb B^{1/2}h\right|^2 \leq g_c(h) \qquad\text{for every }k\neq0. \] Moreover, since $\chi$ takes only the values $0$ and $1$ and has average $\theta$, Parseval's formula yields \[ \sum_{k\neq0}|\widehat\chi(k)|^2 = \int_{Q_v}\chi^2\dd y-\theta^2 = \theta(1-\theta). \] Consequently, \[ \begin{aligned} \dprb{\mathbb L \xi,\xi} &\geq \dprb{\mathbb B^{-1}\xi,\xi} + 2\theta\dprb{h,\xi} - \theta(1-\theta)g_c(h) - \theta\dprb{\mathbb D^{-1}h,h} \\ &= \dprb{\mathbb B^{-1}\xi,\xi} + \theta \left( 2\dprb{h,\xi} - \dprb{(\mathbb A^{-1}-\mathbb B^{-1})^{-1}h,h} - (1-\theta)g_c(h) \right). \end{aligned} \] Taking the supremum over $h\in\Msn$ gives the claimed inequality.	
\end{proof}
 
Before proceeding, we record some consequences of the proof above that will be used later.
\begin{rem}\label{rem:sxedon H-S} For every $h,\xi\in\Msn$, we have \[ \dprb{(\mathbb L-\mathbb B^{-1})(\xi),\xi} \geq -\theta\dprb{(\mathbb A^{-1}-\mathbb B^{-1})^{-1}h,h} +2\theta\dprb{h,\xi}  -\sum_{k\neq0} |\widehat\chi(k)|^2 \left|\Pi_{V(\mathbb B, Rk)}\mathbb B^{1/2}h\right|^2. \] \end{rem}

\begin{rem}\label{rem:Paralili apodeixi gia otan zeta exartate apo xio} Assume in addition that $\chi$ is one-dimensional in the direction $v$, namely that there exists a Borel function $\tilde\chi\colon\R\to\{0,1\}$ such that \[ \chi(y)=\tilde\chi(v\cdot y) \qquad\text{for a.e. }y\in\R^n. \] Let $\zeta$ be an admissible field in Definition~\ref{defn:vasikotatos orismos} which is also one-dimensional in the direction $v$, namely \[ \zeta(y)=\tilde\zeta(v\cdot y) \qquad\text{for a.e. }y\in\R^n \] for some $\tilde\zeta\colon\R\to\Msn$. We denote by $D_\xi$ the class of such $\zeta$'s. Since $v=Re_{i_0}$, Fubini's theorem shows that the only nonzero Fourier modes of $\chi$ and $\zeta$ are those with $k$ parallel to $e_{i_0}$. Therefore, arguing as in the proof of Proposition~\ref{prop:sxesi 6.9}, for every $h\in\Msn$ we obtain \[ \begin{aligned} &\inf_{\zeta\in D_\xi} \int_{Q_v} \left( 2\chi(y)\dprb{h,\zeta(y)} + \dprb{\mathbb B^{-1}\zeta(y),\zeta(y)} \right)\dd y \\ &\qquad = \dprb{\mathbb B^{-1}\xi,\xi} + 2\theta\dprb{h,\xi} -\theta(1-\theta) \left|\Pi_{V(\mathbb B, v)}\mathbb B^{1/2}h\right|^2. \end{aligned} \] \end{rem}

\begin{rem}\label{rem:sxetika me convexity}
For fixed $\xi\in\Msn$, the function \[ T(h):= 2\dprb{h,\xi} - \dprb{(\mathbb A^{-1}-\mathbb B^{-1})^{-1}h,h} - (1-\theta)g_c(h), \qquad h\in\Msn, \] is strictly concave. Indeed, the first term is affine, the second term is strictly concave since $\mathbb A^{-1}-\mathbb B^{-1}$ is positive definite, and the last term is concave because $g_c$ is the supremum of {strictly} convex quadratic forms.
\end{rem}

\begin{rem}\label{rem: sygrisi}
If $\theta\in(0,1)$, then $\mathbb B>\mathbb L^{-1}$. Indeed, let $\xi\in\Msn$, $\xi\neq0$, and take $h=\lambda\xi$, with \[ 0<\lambda< \frac{2|\xi|^2} {\dprb{(\mathbb A^{-1}-\mathbb B^{-1})^{-1}\xi,\xi} +(1-\theta)g_c(\xi)}. \] Since $g_c$ is positively $2$-homogeneous, we have $T(\lambda\xi)>0$. Hence, by Proposition~\ref{prop:sxesi 6.9}, \[ \dprb{\mathbb L \xi,\xi} > \dprb{\mathbb B^{-1}\xi,\xi} \qquad\text{for every }\xi\in\Msn,\ \xi\neq0. \] Thus $\mathbb L>\mathbb B^{-1}$, or equivalently, \( \mathbb B>\mathbb L^{-1}. \)
\end{rem}

The next lemma gives an explicit formula for $\mathbb L$ in the case where $\chi$ is one-dimensional in the direction $v$.

\begin{lem} \label{lem:1st lemma regarding optimality} Let $\mathbb A$ and $\mathbb B$ be two Hooke's laws with $\mathbb A<\mathbb B$, and let $v$, $R$, $\chi$, and $\theta\in(0,1)$ be as in Definition~\ref{defn:vasikotatos orismos}. Assume in addition that $\chi$ is one-dimensional in the direction $v$, namely that there exists a Borel function $\tilde\chi\colon\R\to\{0,1\}$ such that \[ \chi(y)=\tilde\chi(v\cdot y) \qquad\text{for a.e. }y\in\R^n. \] Then \[ \theta(\mathbb L-\mathbb B^{-1})^{-1} = (\mathbb A^{-1}-\mathbb B^{-1})^{-1} + (1-\theta)f_{\mathbb B}^c(v), \] where $f_{\mathbb B}^c(v)$ is the Hooke's law associated with the quadratic form \[ \dprb{f_{\mathbb B}^c(v)(h),h} = \left|\Pi_{V(\mathbb B, v)}\mathbb B^{1/2}h\right|^2, \qquad h\in\Msn. \] \end{lem}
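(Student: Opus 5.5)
The plan is to prove the two matrix inequalities $\theta(\mathbb L-\mathbb B^{-1})^{-1}\leq M$ and $\theta(\mathbb L-\mathbb B^{-1})^{-1}\geq M$, where
\[
M:=\mathbb D^{-1}+(1-\theta)f^c_{\mathbb B}(v), \qquad \mathbb D:=\mathbb A^{-1}-\mathbb B^{-1}.
\]
Since $\mathbb D^{-1}$ is positive definite and $f^c_{\mathbb B}(v)\geq0$, the tensor $M$ is a Hooke's law. By Remark~\ref{rem: sygrisi}, $\mathbb L-\mathbb B^{-1}$ is positive definite, so all inverses make sense. Both inequalities are equivalent to the identity
\[
\dprb{(\mathbb L-\mathbb B^{-1})\xi,\xi}=\theta\dprb{M^{-1}\xi,\xi} \qquad\text{for all }\xi\in\Msn .
\]
Note also that $f^c_{\mathbb B}(v)=\mathbb B^{1/2}\Pi_{V(\mathbb B,v)}\mathbb B^{1/2}$ as an operator, because $\Pi_{V(\mathbb B,v)}$ is a symmetric idempotent.

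\textbf{Lower bound.} I would avoid the crude bound $g_c$ from Proposition~\ref{prop:sxesi 6.9} and instead use the sharper estimate of Remark~\ref{rem:sxedon H-S}. The one-dimensional $\chi$ has nonzero Fourier modes only for $k$ parallel to $e_{i_0}$, so $Rk$ is parallel to $v$. Since $V(\mathbb B,\lambda v)=V(\mathbb B,v)$ for $\lambda\neq0$, every projection in the sum equals $\langle f^c_{\mathbb B}(v)h,h\rangle$. Parseval then gives $\sum_{k\neq0}|\widehat\chi(k)|^2=\theta(1-\theta)$. Hence for every $h$,
\[
\dprb{(\mathbb L-\mathbb B^{-1})\xi,\xi}\geq 2\theta\dprb{h,\xi}-\theta\dprb{Mh,h}.
\]
Taking the supremum over $h$, attained at $h=M^{-1}\xi$, yields $\theta\langle M^{-1}\xi,\xi\rangle$.

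\textbf{Upper bound.} I would construct an explicit admissible competitor. Set $h^*:=M^{-1}\xi$, $P:=f^c_{\mathbb B}(v)$, and
\[
\zeta(y):=\xi-(\chi(y)-\theta)\,Ph^* .
\]
\begin{itemize}
\item $\zeta$ has mean $\xi$ and the required periodicity.
\item $\zeta$ is a function of $v\cdot y$ only, with jump $Ph^*=\mathbb B^{1/2}\Pi_{V(\mathbb B,v)}(\mathbb B^{1/2}h^*)$. This lies in $\mathbb B^{1/2}V(\mathbb B,v)=Z(v)$, so $(Ph^*)v=0$ and $\dive\zeta=0$ weakly.
\end{itemize}
I then evaluate the energy, splitting $\mathbb H^{-1}=\mathbb B^{-1}+\chi\mathbb D$.

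On $\{\chi=1\}$ the field is $\zeta_A=\xi-(1-\theta)Ph^*$. The defining relation $Mh^*=\xi$ reads $\mathbb D^{-1}h^*=\xi-(1-\theta)Ph^*=\zeta_A$, so $\mathbb D\zeta_A=h^*$. This is exactly the equality case of the Legendre--Fenchel inequality: $\langle\mathbb D\zeta_A,\zeta_A\rangle=2\langle h^*,\zeta_A\rangle-\langle\mathbb D^{-1}h^*,h^*\rangle$.

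Therefore
\[
\dprb{\mathbb L\xi,\xi}\leq\int_{Q_v}\Bigl(2\chi\dprb{h^*,\zeta}+\dprb{\mathbb B^{-1}\zeta,\zeta}\Bigr)\dd y-\theta\dprb{\mathbb D^{-1}h^*,h^*}.
\]
A direct computation remains, using $\int(\chi-\theta)=0$, $\int(\chi-\theta)^2=\int\chi(\chi-\theta)=\theta(1-\theta)$, and $\langle \mathbb B^{-1}Ph^*,Ph^*\rangle=\langle Ph^*,h^*\rangle$ (because $\Pi^2=\Pi$). This is the one-dimensional computation of Remark~\ref{rem:Paralili apodeixi gia otan zeta exartate apo xio}, and it shows the integral equals $\langle\mathbb B^{-1}\xi,\xi\rangle+2\theta\langle h^*,\xi\rangle-\theta(1-\theta)\langle Ph^*,h^*\rangle$. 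Substituting and using $\xi=Mh^*$ gives
\[
\dprb{(\mathbb L-\mathbb B^{-1})\xi,\xi}\leq 2\theta\dprb{h^*,\xi}-\theta\dprb{Mh^*,h^*}=\theta\dprb{M^{-1}\xi,\xi}.
\]

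\textbf{Main obstacle.} The delicate point is the upper bound: recognizing that the competitor must be tuned so that the Fenchel inequality used in Proposition~\ref{prop:sxesi 6.9} is tight on $\{\chi=1\}$. This tightness forces $h^*=M^{-1}\xi$. I would carry out the bookkeeping of the cross terms carefully, since sign errors there are easy to make. Combining the two bounds and inverting gives the stated identity.
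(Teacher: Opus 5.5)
Your proposal is correct. The lower bound is the same as the paper's argument: Remark~\ref{rem:sxedon H-S}, the fact that only modes with $Rk$ parallel to $v$ survive, Parseval, and optimization in $h$.

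For the upper bound you take a different and more direct route. The paper proceeds in four steps:
\begin{enumerate}
\item it restricts to the class $C_\xi$ of fields that are constant on each phase;
\item it rewrites the energy as a supremum over $h$ via the Fenchel identity for $\mathbb D$ and exchanges $\inf_\zeta$ and $\sup_h$ by a saddle-point theorem;
\item it reduces the inner infimum over $C_\xi$ to one over the one-dimensional fields $D_\xi$ by Jensen's inequality;
\item it evaluates that infimum through the Fourier computation of Remark~\ref{rem:Paralili apodeixi gia otan zeta exartate apo xio}.
\end{enumerate}
You instead write down the saddle point explicitly, with $P:=f^c_{\mathbb B}(v)$ and your $M$ equal to the paper's $\mathbb F$:
\[
h^*=M^{-1}\xi, \qquad \zeta=\xi-(\chi-\theta)Ph^*.
\]
The jump $Ph^*$ lies in $\mathbb B^{1/2}V(\mathbb B,v)=Z(v)$, so $\zeta$ is admissible. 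Moreover, $\mathbb D\zeta_A=h^*$ makes the Fenchel inequality an equality on the $\mathbb A$-phase. The upper bound then follows by elementary algebra, and your bookkeeping checks out, in particular $\dprb{\mathbb B^{-1}Ph^*,Ph^*}=\dprb{Ph^*,h^*}$ and $\int_{Q_{v,R}}\chi(\chi-\theta)\,\mathrm{d}y=\theta(1-\theta)$.

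Your version avoids the minimax theorem and the Jensen reduction entirely. It also exhibits the optimal simple-laminate stress field explicitly, which is concrete information when such laminates are later realized (Lemma~\ref{realization}). The paper's version is derivational rather than verificational: it does not require guessing the competitor, and it reuses the duality structure of the lower bound, which explains where $h^*$ comes from.
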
 \begin{proof} The proof follows~\cite[Proposition 3.1]{AllaireKohn93b}. We sketch the argument, making a few points more explicit. Set \[ \mathbb D:=\mathbb A^{-1}-\mathbb B^{-1}, \qquad {\mathbb F} :=\mathbb D^{-1}+(1-\theta)f_{\mathbb B}^c(v). \] Since $\mathbb A<\mathbb B$, the operator $\mathbb D$ is positive definite. It is enough to prove that, for every $\xi\in\Msn$, \[ \dprb{(\mathbb L-\mathbb B^{-1})(\xi),\xi} = \theta\dprb{\mathbb F^{-1}\xi,\xi}. \] Equivalently, using the dual representation of the inverse of a positive definite operator, it is enough to prove \[ \dprb{(\mathbb L-\mathbb B^{-1})(\xi),\xi} = \theta\sup_{h\in\Msn} \left\{ 2\dprb{h,\xi} - \dprb{\mathbb D^{-1}h,h} - (1-\theta)\dprb{f_{\mathbb B}^c(v)(h),h} \right\}. \] We first prove the lower bound. By Remark~\ref{rem:sxedon H-S}, for every $h\in\Msn$, \[ \dprb{(\mathbb L-\mathbb B^{-1})(\xi),\xi} \geq -\theta\dprb{\mathbb D^{-1}h,h} + 2\theta\dprb{h,\xi}  - \sum_{k\neq0} |\widehat\chi(k)|^2 \left|\Pi_{V(\mathbb B, Rk)}\mathbb B^{1/2}h\right|^2 .  \] Since $\chi$ is one-dimensional in the direction $v$, and since $v=Re_{i_0}$, Fubini's theorem gives $\widehat\chi(k)=0$ unless $k$ is parallel to $e_{i_0}$. For such modes, $V(\mathbb B, Rk)=V(\mathbb B, v)$. Hence, by Parseval's formula, \[ \begin{aligned} \sum_{k\neq0} |\widehat\chi(k)|^2 \left|\Pi_{V(\mathbb B, Rk)}\mathbb B^{1/2}h\right|^2 &= \left|\Pi_{V(\mathbb B, v)}\mathbb B^{1/2}h\right|^2 \sum_{k\neq0}|\widehat\chi(k)|^2 \\ &= \theta(1-\theta) \dprb{f_{\mathbb B}^c(v)(h),h}. \end{aligned} \] Therefore, \[ \begin{aligned} \dprb{(\mathbb L-\mathbb B^{-1})(\xi),\xi} &\geq \theta \left\{ 2\dprb{h,\xi} - \dprb{\mathbb D^{-1}h,h} - (1-\theta)\dprb{f_{\mathbb B}^c(v)(h),h} \right\}. \end{aligned} \] Taking the supremum over $h\in\Msn$ gives \[ \dprb{(\mathbb L-\mathbb B^{-1})(\xi),\xi} \geq \theta\dprb{\mathbb F^{-1}\xi,\xi}. \]

We now prove the opposite inequality. Let $C_\xi$ be the class of admissible fields $\zeta$ in the definition of $\mathbb L$ which are constant on each phase $\{\chi=1\}$ and $\{\chi=0\}$. Since we restrict the class of competitors, \[ \dprb{\mathbb L \xi,\xi} \leq \inf_{\zeta\in C_\xi} \int_{Q_{v,R}} \dprb{\mathbb H(y)^{-1}\zeta(y),\zeta(y)} \dd y. \] For $\zeta\in C_\xi$, the dual representation of the quadratic form associated with $\mathbb D$ gives \[ \begin{aligned} &\int_{Q_{v,R}} \dprb{\mathbb H(y)^{-1}\zeta(y),\zeta(y)} \dd y \\ &\quad = \sup_{h\in\Msn} \biggl[ \int_{Q_{v,R}} \left( 2\chi(y)\dprb{h,\zeta(y)} + \dprb{\mathbb B^{-1}\zeta(y),\zeta(y)} \right)\dd y - \theta\dprb{\mathbb D^{-1}h,h} \biggr]. \end{aligned} \] Thus, by the saddle-point principle~\cite{Rockafellar70book,EkelandTemam76book},  \[ \begin{aligned} &\inf_{\zeta\in C_\xi} \int_{Q_{v,R}} \dprb{\mathbb H(y)^{-1}\zeta(y),\zeta(y)} \dd y \\ &\quad = \sup_{h\in\Msn} \biggl[ \inf_{\zeta\in C_\xi} \int_{Q_{v,R}} \left( 2\chi(y)\dprb{h,\zeta(y)} + \dprb{\mathbb B^{-1}\zeta(y),\zeta(y)} \right)\dd y - \theta\dprb{\mathbb D^{-1}h,h} \biggr]. \end{aligned} \] We now compute the inner infimum. Recall the class \(D_\xi\) of one-dimensional admissible fields appearing in Remark~\ref{rem:Paralili apodeixi gia otan zeta exartate apo xio}. We claim that
\[
\begin{aligned}
&\inf_{\zeta\in C_\xi}
\int_{Q_{v,R}}
\left(
2\chi(y)\dprb{h,\zeta(y)}
+
\dprb{\mathbb B^{-1}\zeta(y),\zeta(y)}
\right)\dd y
\\
&\quad =
\inf_{\zeta\in D_\xi}
\int_{Q_{v,R}}
\left(
2\chi(y)\dprb{h,\zeta(y)}
+
\dprb{\mathbb B^{-1}\zeta(y),\zeta(y)}
\right)\dd y .
\end{aligned}
\]
Indeed, \(C_\xi\subset D_\xi\), which immediately gives the inequality ``\(\geq\)''. Conversely, let \(\zeta\in D_\xi\). Then there exists a function \(\widetilde\zeta\colon\mathbb R\to\Msn\) such that
\[
\zeta(y)=\widetilde\zeta(v\cdot y)
\qquad\text{for a.e. } y\in\mathbb R^n .
\]
The condition \(\dive\zeta=0\) implies that
\(\widetilde\zeta(t)v\) is constant. In particular, if we denote by \(\zeta^{(1)}\) and \(\zeta^{(0)}\) the averages of \(\zeta\) on the two phases \(\{\chi=1\}\) and \(\{\chi=0\}\), respectively, then
\[
\zeta^{(1)}v=\zeta^{(0)}v .
\]
Define
\[
\bar\zeta(y):=\chi(y)\zeta^{(1)}+(1-\chi(y))\zeta^{(0)} .
\]
Then \(\bar\zeta\in C_\xi\). 
Moreover, since the integrand
\[
\eta\mapsto 2\chi(y)\dprb{h,\eta}
+
\dprb{\mathbb B^{-1}\eta,\eta}
\]
is convex in \(\eta\) on each phase, Jensen's inequality gives
\[
\begin{aligned}
&\int_{Q_{v,R}\cap\{\chi=1\}}
\left(
2\dprb{h,\zeta(y)}
+
\dprb{\mathbb B^{-1}\zeta(y),\zeta(y)}
\right)\dd y
\\
&\quad\geq
\int_{Q_{v,R}\cap\{\chi=1\}}
\left(
2\dprb{h,\bar\zeta(y)}
+
\dprb{\mathbb B^{-1}\bar\zeta(y),\bar\zeta(y)}
\right)\dd y,
\end{aligned}
\]
and likewise on \(Q_{v,R}\cap\{\chi=0\}\). Therefore the infimum over \(C_\xi\) is not larger than the infimum over \(D_\xi\), proving the opposite inequality.

By Remark~\ref{rem:Paralili apodeixi gia otan zeta exartate apo xio}, we therefore have
\[
\begin{aligned}
&\inf_{\zeta\in C_\xi}
\int_{Q_{v,R}}
\left(
2\chi(y)\dprb{h,\zeta(y)}
+
\dprb{\mathbb B^{-1}\zeta(y),\zeta(y)}
\right)\dd y
\\
&\quad =
\dprb{\mathbb B^{-1}\xi,\xi}
+
2\theta\dprb{h,\xi}
-
\theta(1-\theta)
\dprb{f_{\mathbb B}^c(v)(h),h}.
\end{aligned}
\] \end{proof}

Let $\mathbb{A}, \mathbb{B}$ be two Hooke's laws with $\mathbb A<\mathbb B$, and let $p\in\N$. For every $j=1,\dots,p$, let $v_j$, $R_j$, $\chi_j$, and $\theta_j\in(0,1)$ be as in Lemma~\ref{lem:1st lemma regarding optimality}. We define recursively a family of complementary Hooke's laws as follows. Set \[ \mathbb L_0:=\mathbb A^{-1}. \] Assume that $\mathbb L_{j-1}$ has been defined and that $\mathbb B>\mathbb L_{j-1}^{-1}$. We set \[ \mathbb L_j := \mathbb L_{\mathbb L_{j-1}^{-1},\mathbb B,\theta_j,v_j,R_j,\chi_j}. \] By Lemma~\ref{lem:1st lemma regarding optimality}, we have \begin{equation}\label{recursive definition} \theta_j(\mathbb L_j-\mathbb B^{-1})^{-1} = (\mathbb L_{j-1}-\mathbb B^{-1})^{-1} + (1-\theta_j)f_{\mathbb B}^{c}(v_j). \end{equation} Moreover, by Remark~\ref{rem: sygrisi}, $\mathbb B>\mathbb L_j^{-1}$. Hence the construction is well-defined for every $j=1,\dots,p$. Let us prove that $\mathbb L_p$ satisfies a structure condition analogous to that obtained for $\mathbb L$ in Lemma~\ref{lem:1st lemma regarding optimality}.
\begin{defn} \label{defn: complementary p-laminate} Let $\mathbb A$ and $\mathbb B$ be two Hooke's laws with $\mathbb A<\mathbb B$, and let $p\in\N$. We say that a Hooke's law $\mathbb L_p$ is a complementary $p$-laminate with respect to $\mathbb A$ and $\mathbb B$ if there exist $v_j$, $R_j$, $\chi_j$, and $\theta_j\in(0,1)$, $j=1,\dots,p$, as in Lemma~\ref{lem:1st lemma regarding optimality}, such that $\mathbb L_p$ is obtained after $p$ steps of the recursive construction above. \end{defn}
\begin{rem}\label{rem:adding laminates by +1} Let $\mathbb{A}, \mathbb{B}$ be two Hooke's laws with $\mathbb{A}< \mathbb{B}$ and $j\in\N.$ If $\mathbb{L'}$ is a complementary j-laminate with respect to $\mathbb{A},\mathbb{B}$, and $\mathbb{L''}$ is a complementary 1-laminate with respect to $(\mathbb{L'})^{-1},\mathbb{B}$ then $\mathbb{L''}$ is a complementary (j+1)-laminate with respect to $\mathbb{A},\mathbb{B}.$
\end{rem}

The proofs of the following two propositions are parallel to ~\cite[Proposition 3.2]{AllaireKohn93b} and ~\cite[Theorem 3.5]{AllaireKohn93b} respectively.

\begin{prop} \label{prop:microstructure for many unit vectors} Let $\mathbb A$ and $\mathbb B$ be two Hooke's laws with $\mathbb A<\mathbb B$, let $\theta\in(0,1)$, let $p\in\N$, and let $m_1,\dots,m_p > 0$ be such that $\sum_{i=1}^p m_i=1$. Let $v_1,\dots,v_p$ be unit vectors in $\R^n$. Then there exists a complementary $p$-laminate $\mathbb L_p$ with respect to $\mathbb A$ and $\mathbb B$ such that \[ \theta(\mathbb L_p-\mathbb B^{-1})^{-1} = (\mathbb A^{-1}-\mathbb B^{-1})^{-1} + (1-\theta)\sum_{i=1}^p m_i f_{\mathbb B}^{c}(v_i).\] Moreover, the laminate can be chosen so that the associated recursive volume
fraction of the phase \(\mathbb A\) is \(\theta\). \end{prop}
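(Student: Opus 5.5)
The plan is to iterate the recursion \eqref{recursive definition} $p$ times with volume fractions chosen so that the telescoped identity gives exactly the prescribed weights $m_i$. Write $\mathbb M_j:=(\mathbb L_j-\mathbb B^{-1})^{-1}$, so $\mathbb M_0=\mathbb D^{-1}=(\mathbb A^{-1}-\mathbb B^{-1})^{-1}$. The recursion reads $\theta_j\mathbb M_j=\mathbb M_{j-1}+(1-\theta_j)f^c_{\mathbb B}(v_j)$. Multiplying by $\prod_{i<j}\theta_i$ and summing (telescoping), I get
\[
\Bigl(\prod_{i=1}^p\theta_i\Bigr)\mathbb M_p=\mathbb M_0+\sum_{j=1}^p\Bigl(\prod_{i=1}^{j-1}\theta_i\Bigr)(1-\theta_j)f^c_{\mathbb B}(v_j).
\]
So it suffices to choose $\theta_1,\dots,\theta_p\in(0,1)$ with $\prod_{i=1}^p\theta_i=\theta$ and $\bigl(\prod_{i<j}\theta_i\bigr)(1-\theta_j)=(1-\theta)m_j$ for every $j$.

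These equations are solved explicitly. Set $\Theta_j:=\prod_{i\le j}\theta_i$ with $\Theta_0=1$. The conditions become $\Theta_{j-1}-\Theta_j=(1-\theta)m_j$, hence $\Theta_j=1-(1-\theta)\sum_{i\le j}m_i$. This sequence is strictly decreasing (since $m_j>0$), positive (since $\Theta_j\ge\Theta_p=1-(1-\theta)=\theta>0$), and ends at $\Theta_p=\theta$. Thus $\theta_j:=\Theta_j/\Theta_{j-1}\in(0,1)$ is well defined, and $\prod\theta_i=\theta$ holds automatically. This product is also the recursive volume fraction of the phase $\mathbb A$, since at each step the previous composite occupies fraction $\theta_j$; this gives the ``moreover'' part.

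It remains to check that the construction is legitimate at each step. For each $j$, pick $R_j\in SO(n)$ with $R_je_{i_0}=v_j$, and $\chi_j(y)=\tilde\chi_j(v_j\cdot y)$ with $\tilde\chi_j$ the $1$-periodic indicator of $[0,\theta_j)$. This $\chi_j$ is $R_je_k$-periodic: for $k\ne i_0$ the shift does not change $v_j\cdot y$, and for $k=i_0$ it shifts $v_j\cdot y$ by $1$. It also has average $\theta_j$ on $Q_{v_j,R_j}$. The hypothesis $\mathbb L_{j-1}^{-1}<\mathbb B$ needed to apply Lemma~\ref{lem:1st lemma regarding optimality} holds for $j=1$ by $\mathbb A<\mathbb B$, and inductively by Remark~\ref{rem: sygrisi}, exactly as noted before Definition~\ref{defn: complementary p-laminate}. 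Lemma~\ref{lem:1st lemma regarding optimality} then yields the recursion at each step.

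The only delicate point is the bookkeeping of the telescoping with the correct factors. The main risk is an index slip in matching $(1-\theta_j)\prod_{i<j}\theta_i$ with $(1-\theta)m_j$, and the explicit formula for $\Theta_j$ settles this. Dividing the telescoped identity by $\theta=\prod\theta_i$ after multiplying through gives exactly the claimed formula $\theta(\mathbb L_p-\mathbb B^{-1})^{-1}=(\mathbb A^{-1}-\mathbb B^{-1})^{-1}+(1-\theta)\sum_i m_if^c_{\mathbb B}(v_i)$.
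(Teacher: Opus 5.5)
Your proof is correct and is essentially the argument the paper defers to (it cites Allaire--Kohn's Proposition 3.2): iterate the one-layer identity \eqref{recursive definition}, telescope with the partial products $\Theta_j=\prod_{i\le j}\theta_i$, choose $\Theta_j=1-(1-\theta)\sum_{i\le j}m_i$ so that $\theta_j=\Theta_j/\Theta_{j-1}\in(0,1)$, and propagate the hypothesis $\mathbb L_{j-1}^{-1}<\mathbb B$ by Remark~\ref{rem: sygrisi}. One small point: your closing step of ``dividing by $\theta$ after multiplying through'' is unnecessary, because once $\Theta_p=\theta$ the telescoped identity is already exactly the claimed formula.
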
 
 \begin{prop} \label{prop:proposition regarding optimality for H-S} Let $\mathbb A$ and $\mathbb B$ be two Hooke's laws with $\mathbb A<\mathbb B$, let $\theta\in(0,1)$, and let $\xi\in\Msn$. Then there exist $p\in\N$ and a complementary $p$-laminate $\mathbb L_p$ with respect to $\mathbb A$ and $\mathbb B$, as in Proposition~\ref{prop:microstructure for many unit vectors}, such that \[ \dprb{\mathbb L_p(\xi),\xi} = \dprb{\mathbb B^{-1}\xi,\xi} + S(\xi),\] 
 where $S$ has been {defined in Proposition~\ref{prop:sxesi 6.9}}.\end{prop}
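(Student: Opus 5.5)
The plan is to realise the supremum defining $S(\xi)$ by an explicit finite family of lamination directions, and then to invoke Proposition~\ref{prop:microstructure for many unit vectors} with exactly those directions and weights. Set $\mathbb D:=\mathbb A^{-1}-\mathbb B^{-1}$, which is positive definite, and let $T$ be as in Remark~\ref{rem:sxetika me convexity}, so that $S(\xi)=\theta\sup_h T(h)$. For $k$ on the unit sphere write $q_k(h):=\dprb{f^c_{\mathbb B}(k)h,h}=|\Pi_{V(\mathbb B,k)}\mathbb B^{1/2}h|^2$, so that $g_c=\sup_{|k|=1}q_k$. The case $\xi=0$ is trivial (take $h=0$ and any $1$-laminate), so we assume $\xi\neq0$.

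\textbf{Step 1: existence of a maximiser.} Since $g_c\ge0$ and $\dprb{\mathbb D^{-1}h,h}\ge c|h|^2$, the function $T$ is continuous and tends to $-\infty$ as $|h|\to\infty$. By Remark~\ref{rem:sxetika me convexity} it is strictly concave, so it has a unique maximiser $h^*$, and $S(\xi)=\theta T(h^*)$.

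\textbf{Step 2: optimality condition.} The first-order condition reads $0\in\partial(-T)(h^*)$, that is,
\[
2\xi-2\mathbb D^{-1}h^*\in(1-\theta)\,\partial g_c(h^*).
\]
To compute $\partial g_c$ we use Danskin's theorem for a supremum of smooth convex functions over the compact parameter set $\{|k|=1\}$. This requires $k\mapsto f^c_{\mathbb B}(k)$ to be continuous. Continuity holds because $Z(k)=\{A: Ak=0\}$ has constant dimension $n(n-1)/2$ and depends continuously on $k$; hence so do $V(\mathbb B,k)=\mathbb B^{-1/2}Z(k)$ and the orthogonal projection onto it. Danskin's theorem then gives
\[
\partial g_c(h^*)=\operatorname{conv}\setb{2f^c_{\mathbb B}(k)h^*}{|k|=1,\ q_k(h^*)=g_c(h^*)}.
\]
By Carathéodory's theorem in the finite-dimensional space $\Msn$, there exist finitely many maximising unit vectors $v_1,\dots,v_p$ and weights $m_i>0$ with $\sum_i m_i=1$ such that
\[
\xi=\mathbb F h^*,\qquad \mathbb F:=\mathbb D^{-1}+(1-\theta)\sum_{i=1}^p m_i f^c_{\mathbb B}(v_i),
\]
and $q_{v_i}(h^*)=g_c(h^*)$ for every $i$. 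Any zero weights are simply discarded.

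\textbf{Step 3: build the laminate and compare values.} Apply Proposition~\ref{prop:microstructure for many unit vectors} with these $\theta,m_i,v_i$. It produces a complementary $p$-laminate $\mathbb L_p$ with $\theta(\mathbb L_p-\mathbb B^{-1})^{-1}=\mathbb F$; note that $\mathbb F$ is positive definite, hence invertible. Using the dual representation of the inverse,
\[
\dprb{(\mathbb L_p-\mathbb B^{-1})\xi,\xi}=\theta\dprb{\mathbb F^{-1}\xi,\xi}=\theta\sup_h\bigl\{2\dprb{h,\xi}-\dprb{\mathbb F h,h}\bigr\}.
\]
The supremum is attained at $h=\mathbb F^{-1}\xi=h^*$. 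Evaluating there, and using $\sum_i m_i q_{v_i}(h^*)=g_c(h^*)$, gives
\[
\theta\bigl(2\dprb{h^*,\xi}-\dprb{\mathbb D^{-1}h^*,h^*}-(1-\theta)g_c(h^*)\bigr)=\theta T(h^*)=S(\xi).
\]
Adding $\dprb{\mathbb B^{-1}\xi,\xi}$ to both sides yields the claimed identity. The lower bound in Proposition~\ref{prop:sxesi 6.9} is consistent with this, since it shows the value cannot be smaller.

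\textbf{Main obstacle.} The delicate step is Step 2. One must justify the subdifferential formula for $g_c$, in particular the continuity of $k\mapsto\Pi_{V(\mathbb B,k)}$, and then reduce the resulting convex combination to finitely many directions by Carathéodory's theorem. Everything else is linear algebra together with the already established Proposition~\ref{prop:microstructure for many unit vectors}.
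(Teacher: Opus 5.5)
Your proof is correct and follows essentially the route the paper takes: the paper gives no argument of its own and defers to Allaire--Kohn (Theorem~3.5), whose steps you reproduce. Those steps are the unique maximiser $h^*$ of the strictly concave function $T$, the optimality condition $\xi-\mathbb D^{-1}h^*\in\frac{1-\theta}{2}\,\partial g_c(h^*)$ with $\partial g_c(h^*)$ the convex hull of $\{2f^c_{\mathbb B}(k)h^*\}$ over maximising directions $k$, Carath\'eodory to reduce to finitely many $v_i,m_i$, and the laminate of Proposition~\ref{prop:microstructure for many unit vectors}, whose dual formula for $\mathbb F^{-1}$ is maximised exactly at $h^*$. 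Your explicit check that $k\mapsto\Pi_{V(\mathbb B,k)}$ is continuous (via the constant dimension of $Z(k)$) is a useful detail, since the Danskin step needs it and the reference leaves it implicit.
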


\subsection{Convergence }
We recall the notation introduced in Section~\ref{sc:convex} for the quadratic form $j^*$, as well as the subspaces $Z(k)$ and {$V(\mathbb C,k)$} introduced in Remark~\ref{rem:basic properties} {with $\mathbb B := 2\mathbb C$.} Setting also $\mathbb A_0:=2\Cbb$, we define for every $h\in\Msn$,
\[
g_c(h):=
\sup_{|k|=1}
\left|
\Pi_{V(\mathbb C,k)}\mathbb A_0^{1/2}h
\right|^2.
\]
The following {new} result identifies the relation between $j$ and $g_c$. This relation is the key ingredient in our generalization of the results of~\cite[Chapter 4]{BabadjianIurlanoRindler23}.
\begin{prop} \label{prop:kyrio neo apotelesma tou paper} For every $\tau\in\Msn$, \[ \rho^\circ(\tau)=2\sqrt2\,\sqrt{g_c^*(\tau)}. \] Consequently, for every $\theta>0$, \[ \sup_{h\in\Msn}\bigl\{2\dprb{\tau,h}-\theta g_c(h)\bigr\}<+\infty, \] and $g_c^*(\tau)\geq0$, with equality if and only if $\tau=0$. \end{prop}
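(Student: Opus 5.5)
The strategy is to identify $g_c$ explicitly as a multiple of $\bar j$, namely $g_c=4\bar j=2\rho^2$, and then obtain the formula for $\rho^\circ$ by conjugating. Throughout I use the convention fixed just before the statement: $V(\mathbb C,k)$ means $V(\mathbb B,k)=\mathbb B^{-1/2}(Z(k))$ with $\mathbb B=2\Cbb=\mathbb A_0$.

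\emph{Step 1 (variational formula for $g_c$).} Since $\Pi_V$ is an orthogonal projection onto the subspace $V:=\mathbb A_0^{-1/2}Z(k)$, for every $h\in\Msn$ we have
\[
\bigl|\Pi_{V}\mathbb A_0^{1/2}h\bigr|^2=\sup\setb{\dprb{\gamma,\mathbb A_0^{1/2}h}^2}{\gamma\in V,\ |\gamma|\le 1}.
\]
Substitute $\gamma=\mathbb A_0^{-1/2}\tau$ with $\tau\in Z(k)$. Then $\dprb{\gamma,\mathbb A_0^{1/2}h}=\dprb{\tau,h}$, and
\[
|\gamma|^2=\dprb{\mathbb A_0^{-1}\tau,\tau}=\tfrac12\dprb{\Cbb^{-1}\tau,\tau}=j^*(\tau).
\]
The set $Z(k)$ consists of the symmetric matrices with $\tau k=0$. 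Taking the union over $|k|=1$ therefore gives exactly $\Lambda_{\dive}$. Hence
\[
g_c(h)=\sup\setb{\dprb{\tau,h}^2}{\tau\in\Lambda_{\dive},\ j^*(\tau)\le1}.
\]

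\emph{Step 2 (comparison with $\bar j$).} The cone $\Lambda_{\dive}$ is invariant under scaling by $t\in\R$. So in the definition \eqref{Leg-Fen-wave} we can write $\tau=t\tau_0$ with $j^*(\tau_0)=1$ and optimise in $t$:
\[
\sup_{t\in\R}\bigl(t\dprb{\xi,\tau_0}-t^2\bigr)=\tfrac14\dprb{\xi,\tau_0}^2 .
\]
Taking the supremum over such $\tau_0$ and using Step 1 yields
\[
\bar j(\xi)=\tfrac14 g_c(\xi), \qquad\text{so}\qquad g_c=4\bar j=2\rho^2 .
\]

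\emph{Step 3 (conjugation).} For a gauge $\rho$ and $a>0$, the standard identity $\bigl(\tfrac a2\rho^2\bigr)^*=\tfrac1{2a}(\rho^\circ)^2$ holds; this is the same fact as \eqref{eq:maxalpha}, after rescaling. Applying it with $a=4$ gives $g_c^*=\tfrac18(\rho^\circ)^2$, that is,
\[
\rho^\circ=2\sqrt2\sqrt{g_c^*}.
\]

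\emph{Step 4 (consequences).} The quantity in the statement is the conjugate of $\theta g_c$ evaluated at $2\tau$:
\[
\sup_h\bigl\{2\dprb{\tau,h}-\theta g_c(h)\bigr\}=(\theta g_c)^*(2\tau).
\]
Since $(\theta g_c)^*(\sigma)=\theta g_c^*(\sigma/\theta)$ and $g_c^*$ is $2$-homogeneous, this equals $\tfrac{4}{\theta}g_c^*(\tau)=\tfrac{1}{2\theta}(\rho^\circ(\tau))^2$. This is finite by \eqref{eq:jbar_est}. The same estimate, $c^{-1}|\tau|\le\rho^\circ(\tau)$, shows that $g_c^*(\tau)\ge0$, with equality if and only if $\tau=0$.

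The main obstacle is Step 1: checking that the projection onto the transformed subspace $\mathbb A_0^{-1/2}Z(k)$ turns into a supremum over $Z(k)$ weighted exactly by $j^*$. This relies on the normalisation $\mathbb A_0=2\Cbb$, which is what makes $\dprb{\mathbb A_0^{-1}\tau,\tau}=j^*(\tau)$. I will also verify that the union of the $Z(k)$ over unit vectors $k$ equals $\Lambda_{\dive}$, and that the homogeneity-based optimisation in Step 2 is legitimate at $\tau_0$ with $\dprb{\xi,\tau_0}=0$, where it trivially contributes $0$.
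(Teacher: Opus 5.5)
Your proposal is correct and follows essentially the paper's route. The paper proves $\bar j(2h)=g_c(h)$, which is your $g_c=4\bar j$, by computing for each unit $k$ the quantity $\sup_{\tau'\in Z(k)}\{2\dprb{h,\tau'}-j^*(\tau')\}$ directly as $|\Pi_{V(\mathbb C,k)}\mathbb A_0^{1/2}h|^2$ via $z=\mathbb A_0^{-1/2}\tau'$; you instead write $g_c$ as a supremum of squared linear forms and then optimise along rays of $\Lambda_{\dive}$, which is the same Legendre computation split into two steps. Your conjugation step ($g_c^*=\tfrac18(\rho^\circ)^2$, i.e.\ $\bar j^*=4g_c^*$) and the consequences drawn from \eqref{eq:jbar_est} coincide with the paper's.
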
 \begin{proof} We first prove that \begin{equation}\label{eq:jbar_gc_identity} \bar j(2h)=g_c(h) \qquad\text{for every }h\in\Msn. \end{equation} Indeed, by the definition of $\bar j$, \[ \bar j(2h) = \sup_{\tau'\in\Lambda_{\dive}} \bigl\{ 2\dprb{h,\tau'}-j^*(\tau') \bigr\}. \] Since \[ \Lambda_{\dive}=\bigcup_{|k|=1}Z(k), \] and as $j^*(\tau')=|\mathbb A_0^{-1/2}\tau'|^2$, we get \[ \bar j(2h) = \sup_{|k|=1}\ \sup_{\tau'\in Z(k)} \bigl\{ 2\dprb{h,\tau'}-|\mathbb A_0^{-1/2}\tau'|^2 \bigr\}. \]

{Fix $k \in \R^n$ with $|k|=1$. Then $\tau'\in Z(k)$ if and only if $z:=\mathbb A_0^{-1/2}\tau' \in V(\mathbb C,k)$,} and therefore \[ \sup_{\tau'\in Z(k)} \bigl\{ 2\dprb{h,\tau'}-|\mathbb A_0^{-1/2}\tau'|^2 \bigr\} = \sup_{z\in V(\mathbb C,k)} \bigl\{ 2\dprb{\mathbb A_0^{1/2}h,z}-|z|^2 \bigr\} = \left|\Pi_{V(\mathbb C,k)}\mathbb A_0^{1/2}h\right|^2.  \] Taking the supremum over $|k|=1$ gives~\eqref{eq:jbar_gc_identity}. We now take conjugates. From~\eqref{eq:jbar_gc_identity}, \[ \bar j^*(\tau) = \sup_{u\in\Msn} \bigl\{ \dprb{u,\tau}-\bar j(u) \bigr\} = \sup_{h\in\Msn} \bigl\{ 2\dprb{h,\tau}-g_c(h) \bigr\}. \] Since $g_c$ is positively $2$-homogeneous, also $g_c^*$ is positively $2$-homogeneous, and hence \[ \bar j^*(\tau) = g_c^*(2\tau) = 4g_c^*(\tau). \] Using the relation \[ \rho^\circ(\tau)=\sqrt{2\bar j^*(\tau)}, \] {which follows from~\eqref{eq:maxalpha},} we obtain \[ \rho^\circ(\tau) = 2\sqrt2\,\sqrt{g_c^*(\tau)}. \] Finally, for every $\theta>0$, \[ \sup_{h\in\Msn} \bigl\{ 2\dprb{\tau,h}-\theta g_c(h) \bigr\} = \sup_{h'\in\Msn} \biggl\{ \dprBB{\frac{2\tau}{\sqrt\theta},h'} - g_c(h') \biggr\} = g_c^*\left(\frac{2\tau}{\sqrt\theta}\right) = \frac{4}{\theta}g_c^*(\tau), \] which is finite by the identity just proved and the estimate on $\rho^\circ$ in~\eqref{eq:jbar_est}. The same estimate implies that $g_c^*(\tau)\geq0$, with equality if and only if $\tau=0$. \end{proof}

\begin{defn} Let $\theta\in[0,1]$, $l>0$, $\tau\in\Msn$, and let $\mathbb X$ be a Hooke's law with $\mathbb X<\mathbb A_0$. We define 
	\[ K^\mathbb X_\theta := \setb{ \mathbb L_{\mathbb X,\mathbb A_0,1-\theta,v,R,\chi} }{ v,R,\chi \text{ vary as in Definition~\ref{defn:vasikotatos orismos}} }, \] 
\[ F^\mathbb X(\tau,\theta) := \inf_{\mathbb L\in K^\mathbb X_\theta} \dprb{\mathbb L \tau,\tau}, \]
and
\begin{equation}\label{FalFl} F_l^\mathbb X(\tau) := \inf_{\theta\in[0,1]} \bigl\{ F^\mathbb X(\tau,\theta)+l\theta \bigr\}, \qquad F_l(\tau) = \inf_{\theta\in[0,1]} \bigl\{ F(\tau,\theta)+l\theta \bigr\}. 
\end{equation}
\end{defn} 

{
\begin{lem}\label{realization}
	Let $\mathbb A$ and $\mathbb B$ be two Hooke's laws with $\mathbb A< \mathbb B$. Let $\mathbb L_p$ be a finite complementary \(p\)-laminate with
	phases $\mathbb A$ and $\mathbb B$ in the sense of Definition \ref{defn: complementary p-laminate}. More precisely, assume that
\(\mathbb L_p\) is obtained through the recursive construction
\[
\mathbb L_0:=\mathbb A^{-1},
\qquad
\mathbb L_j
:=
\mathbb L_{\mathbb L_{j-1}^{-1},\mathbb B,\theta_j,v_j,R_j,\chi_j},
\qquad j=1,\dots,p,
\]
with \(\theta_j\in(0,1)\). We denote by
\[
m_p:=\prod_{j=1}^p \theta_j
\]
the volume fraction of the original phase \(\mathbb A\) associated with this
recursive representation.
	Then there exists a sequence of periodic two-phase coefficients
\[
\widehat{\mathbb H}_k(y)=\chi_k(y)\mathbb A+(1-\chi_k(y))\mathbb B,
\qquad
\chi_k\colon\mathbb R^n\to\{0,1\},
\]
such that, on a fundamental periodicity cell \(Q_k\) with \(|Q_k|=1\), it holds
\[
\int_{Q_k}\chi_k(y)\,dy=m_p,
\]
and such that the corresponding complementary effective Hooke's laws
\(\mathbb L_k^{\mathrm{eff}}\), defined as in~\eqref{L}, satisfy
\[
\mathbb L_k^{\mathrm{eff}}\to \mathbb L_p
\]
in the finite-dimensional space of fourth-order tensors.
\end{lem}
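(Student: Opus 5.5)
We argue by induction on $p$, using a separation-of-scales construction: the $j$-th lamination is performed at a length scale much finer than that of the $(j+1)$-th, and the sequence of genuinely periodic two-phase microstructures is produced by letting the ratios of consecutive scales go to infinity along a sequence of rational approximations.

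For $p=1$ nothing needs to be done. The coefficient $\mathbb H(y)=\chi_1(y)\mathbb A+(1-\chi_1(y))\mathbb B$ of Definition~\ref{defn:vasikotatos orismos} is already periodic on the cell $Q_{v_1,R_1}$, which has unit volume, and $\chi_1$ has average $\theta_1=m_1$. Its complementary effective law is exactly $\mathbb L_1$, so we take the constant sequence.

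For the inductive step, suppose $\widehat{\mathbb H}'_k=\chi'_k\mathbb A+(1-\chi'_k)\mathbb B$ realizes $\mathbb L_{p-1}$ in the limit. These coefficients are periodic on a cell $Q'_k$ and the phase $\mathbb A$ has volume fraction $m_{p-1}$. Let $\tilde\chi_p$ be the one-dimensional characteristic function in direction $v_p$ from the last lamination. We then define, for a large integer $N$,
\[
\chi_{k,N}(y):=\tilde\chi_p(v_p\cdot y)\,\chi'_k(Ny).
\]
Here we make a concrete choice. We take $Q'_k$ to be a lattice cell containing the direction $v_p$, say $Q'_k=R_pQ$ after a rational rotation approximation, so that the fine lattice $N^{-1}\mathbb Z$-structure is commensurate with the coarse cell $Q_{v_p,R_p}$. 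Then $\chi_{k,N}$ is periodic on $Q_{v_p,R_p}$, which has unit volume. Its volume fraction is exactly $\theta_p m_{p-1}=m_p$, because the fine average factorizes by periodicity. If commensurability fails, we first approximate $R_p$ and the lamination directions by rational rotations. The effective laws depend continuously on these directions, which follows from formula \eqref{recursive definition} and the continuity of $v\mapsto f^c_{\mathbb B}(v)$.

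The key claim is that, as $N\to\infty$, the effective law of $\chi_{k,N}$ converges to the complementary $1$-laminate of $(\mathbb L^{\mathrm{eff}}_k)^{-1}$ and $\mathbb B$ in direction $v_p$ with fraction $\theta_p$. This is a reiterated-homogenization statement for the dual (stress) problem. We prove it directly from the variational definition \eqref{L}.

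Upper bound. We build a competitor by concatenation. On the coarse scale we use the optimal piecewise-constant field from the proof of Lemma~\ref{lem:1st lemma regarding optimality}, with constant values $\zeta^{(1)},\zeta^{(0)}$ satisfying $\zeta^{(1)}v_p=\zeta^{(0)}v_p$. Inside the region $\{\tilde\chi_p=1\}$ we add $\eta(Ny)$, where $\eta$ is the divergence-free periodic corrector realizing $\mathbb L^{\mathrm{eff}}_k$ with average $\zeta^{(1)}$.

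The delicate point is that the resulting field must remain divergence-free across the interfaces $\{v_p\cdot y=\text{const}\}$, which requires matching normal traces. We handle this with a boundary-layer correction. We cut off the oscillating part near the interfaces at distance $\delta$ and restore $\dive=0$ by solving a Bogovskii-type problem. This costs $o(1)$ as $N\to\infty$ with $\delta\to0$ slowly, because the oscillating part has mean zero on each fine cell. Equivalently, we can use smooth approximants and the density remark after Definition~\ref{defn:vasikotatos orismos}.

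Lower bound. We use the $H$-convergence/compensated compactness of the dual problem. As $N\to\infty$, the fine-scale energy $\Gamma$-converges to that of $\mathbb L^{\mathrm{eff}}_k$, by the standard periodic homogenization theorem for divergence-free fields. The lower semicontinuity follows from Lemma~\ref{lem:sloth sympageia}-type div-curl arguments. Hence
\[
\liminf_{N\to\infty}\mathbb L^{\mathrm{eff}}_{k,N}\ge\mathbb L_{(\mathbb L^{\mathrm{eff}}_k)^{-1},\mathbb B,\theta_p,v_p,R_p,\tilde\chi_p}.
\]

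Finally, the map $\mathbb L'\mapsto\mathbb L_{(\mathbb L')^{-1},\mathbb B,\theta_p,v_p,\dots}$ is continuous, by the explicit formula of Lemma~\ref{lem:1st lemma regarding optimality}. The hypothesis $\mathbb B>(\mathbb L')^{-1}$ holds near $\mathbb L_{p-1}$ by Remark~\ref{rem: sygrisi}, since a strict inequality is an open condition. Applying this continuity to $\mathbb L^{\mathrm{eff}}_k\to\mathbb L_{p-1}$ gives convergence of the resulting $1$-laminates to $\mathbb L_p$. A diagonal choice $N=N(k)$ then yields $\mathbb L^{\mathrm{eff}}_{k,N(k)}\to\mathbb L_p$.

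The main obstacle is the upper-bound construction. One must keep exact periodicity on a unit cell, exact volume fraction $m_p$, and $\dive=0$ across the coarse interfaces simultaneously. We would try first to avoid boundary layers altogether. The idea is to choose the fine microstructure itself to be laminar in directions that include $v_p$, and to exploit that only normal-trace continuity $\zeta v_p$ is needed. After averaging over fine cells adjacent to an interface, this condition reduces to $\zeta^{(1)}v_p=\zeta^{(0)}v_p$, so the fine corrector can be taken with zero normal flux through planes orthogonal to $v_p$.
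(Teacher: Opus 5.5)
Your overall architecture matches the paper's: induction on $p$, the product coefficient $\chi_p(y)\chi'_k(Ny)$ (the paper's $\delta=1/N$), identification of its limit as the complementary $1$-laminate of $(\mathbb L^{\mathrm{eff}}_k)^{-1}$ and $\mathbb B$ in direction $v_p$, continuity of this $1$-laminate in the inner law via \eqref{recursive definition}, and a diagonal choice.

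\textbf{The gap.} The step that fails as written is your claim that, once the lattices are commensurable, $\chi_{k,N}$ has volume fraction exactly $\theta_p m_{p-1}$ ``because the fine average factorizes by periodicity''. In coordinates $y=R_px$ the fraction is $\int_0^1\tilde\chi_p(t)g_N(t)\,dt$, where $g_N(t)$ is the average of $x\mapsto\chi'_k(NR_px)$ over the slice $Q\cap\{x_{i_0}=t\}$. The function $g_N$ has mean $m_{p-1}$, but it is not constant in general, so you only get $m_p$ in the limit $N\to\infty$. For instance, take $v_{p-1}=v_p=e_1$, $R_{p-1}=R_p=I$, inner structure $\mathbf 1_{[0,1/2)}(y_1 \bmod 1)$, and $\tilde\chi_p=\mathbf 1_{[0,\theta_p)}$ with $\theta_p$ irrational. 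Then the fraction equals $\theta_p/2$ for no $N$. This is not cosmetic: the lemma is used to produce elements of $K^{\mathbb X}_\theta$, whose definition prescribes the fraction exactly.

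\textbf{How to repair it.} The paper changes the phase indicator on a set of vanishing measure and invokes stability of $H$-convergence under $\Lrm^1$-small perturbations of the coefficients. In your commensurable setting there is a cheaper fix. The map $s\mapsto\int_0^1\tilde\chi_p(t-s)g_N(t)\,dt$ is continuous and has average $\theta_pm_{p-1}$ over $s\in[0,1]$. Hence some translate of the coarse laminate has fraction exactly $m_p$. By Lemma~\ref{lem:1st lemma regarding optimality} the target $1$-laminate depends only on $(\theta_p,v_p)$, and your two bounds are insensitive to such shifts.

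\textbf{Other points to tighten.}
\begin{enumerate}[(i)]
\item Getting commensurability from rational rotations (dense in $SO(n)$), together with continuity of $\mathbb L_p$ in $(v_1,\dots,v_p)$, is a valid substitute for the paper's truncate-and-extend construction on $R_pQ$. It costs an extra approximation and diagonal step. Also, your induction hypothesis must then record the periodicity lattice (e.g.\ $R_{p-1}\mathbb Z^n$ with $R_{p-1}$ rational), which the statement as given does not do.
\item Your boundary-layer upper bound and your locality-based lower bound both need $\{\tilde\chi_p=1\}$ to be a finite union of slabs. This fails for a general Borel $\tilde\chi_p$, but you may replace $\tilde\chi_p$ by an interval indicator at no cost. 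The paper instead covers general $\chi_p$ via the Allaire--Briane result for locally periodic coefficients.
\item In the cut-off step, the error is small because a mean-zero, divergence-free periodic field admits a skew potential. This gives a divergence error of order $(N\delta^{1/2})^{-1}$ in $\Hrm^{-1}$. The correction must be symmetric-valued, e.g.\ $e(u)$ from a periodic elasticity problem, not a row-wise Bogovskii field.
\item Your closing idea of choosing the fine corrector with zero normal flux across planes orthogonal to $v_p$ is not available in general and should be dropped. The boundary-layer argument is what carries the upper bound.
\end{enumerate}
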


\begin{proof}
The proof is by induction on \(p\).
	For \(p=1\), the statement is just the definition of a complementary
	$1$-laminate.  
    
    Assume that the claim is known for
	\((p-1)\)-laminates. By the recursive construction of finite complementary
	laminates \eqref{recursive definition}, the \(p\)-laminate \(\mathbb L_p\) is obtained by laminating the Hooke
	law \(\mathbb L_{p-1}^{-1}\) with the phase
	\(\mathbb B\). Namely,
\[
\mathbb L_p
=
\mathbb L_{\mathbb L_{p-1}^{-1},\mathbb B,\theta_p,v_p,R_p,\chi_p},
\]
where
\[
\int_{R_pQ}\chi_p(y)\,dy=\theta_p .
\]

By the induction hypothesis, there exists a sequence of periodic two-phase coefficients
\[
\mathbb H_k^{\rm in}(y)
=
\chi_k^{\rm in}(y)\mathbb A
+
\bigl(1-\chi_k^{\rm in}(y)\bigr)\mathbb B
\]
whose volume fraction of the phase \(\mathbb A\) is
\[
m_{p-1}:=\prod_{j=1}^{p-1}\theta_j,
\]
and whose complementary effective Hooke's laws
\(\mathbb L_k^{\rm in}\) satisfy
\[
\mathbb L_k^{\rm in}\to \mathbb L_{p-1}.
\]
In particular,
\[
(\mathbb L_k^{\rm in})^{-1}\to \mathbb L_{p-1}^{-1}.
\]
	The periodicity cell of \(\mathbb H_k^{\rm in}\) may be a rotated cell
	\(R_kQ\). 
	For fixed \(k\), introduce the two-scale coefficient
\[
\mathbb H_k(y,z)
=
\chi_p(y)\mathbb H_k^{\rm in}(z)
+
\bigl(1-\chi_p(y)\bigr)\mathbb B ,
\]
where \(y\in R_pQ\) is the slow variable and \(z\in R_kQ\) is the fast
variable. Equivalently,
\[
\mathbb H_k(y,z)
=
\chi_p(y)\chi_k^{\rm in}(z)\mathbb A
+
\bigl(1-\chi_p(y)\chi_k^{\rm in}(z)\bigr)\mathbb B .
\]
Hence \(\mathbb H_k\) takes only the two values \(\mathbb A\) and \(\mathbb B\).

For \(\delta>0\), set, inside the outer cell \(R_pQ\),
\[
\mathbb H_{k,\delta}^{\rm raw}(y)
:=
\mathbb H_k(y,y/\delta).
\]
This coefficient need not be \(R_pQ\)-periodic in the whole space, because
the fast lattice \(R_k\mathbb Z^n\) need not be commensurable with the
outer lattice \(R_p\mathbb Z^n\). We therefore define the admissible
coefficient directly on the cell \(R_pQ\) and then extend it periodically.

Let
\[
\eta_{k,\delta}(y)
:=
\chi_p(y)\chi_k^{\rm in}(y/\delta)
\]
be the characteristic function of the phase \(\mathbb A\) in
\(\mathbb H_{k,\delta}^{\rm raw}\). Since \(\chi_k^{\rm in}\) is periodic with
average \(m_{p-1}\), the Riemann-Lebesgue lemma
gives as $\delta\to0$
\[
\int_{R_pQ}\eta_{k,\delta}(y)\,dy
\longrightarrow
\left(\int_{R_pQ}\chi_p(y)\,dy\right)m_{p-1}
=
\theta_p m_{p-1}
=
m_p.
\]
We choose a characteristic function
\(\widetilde\eta_{k,\delta}:R_pQ\to\{0,1\}\) such that
\[
\int_{R_pQ}\widetilde\eta_{k,\delta}(y)\,dy=m_p
\]
and
\[
\bigl|\{\widetilde\eta_{k,\delta}\neq \eta_{k,\delta}\}\bigr|
\longrightarrow0
\qquad\text{as }\delta\downarrow0.
\]
This is done by changing \(\eta_{k,\delta}\) on a set of measure equal to
\[
\left|
\int_{R_pQ}\eta_{k,\delta}(y)\,dy-m_p
\right|,
\]
which tends to zero.

We then define, on \(R_pQ\),
\[
\mathbb H_{k,\delta}(y)
:=
\widetilde\eta_{k,\delta}(y)\mathbb A+
\bigl(1-\widetilde\eta_{k,\delta}(y)\bigr)\mathbb B,
\]
and extend \(\mathbb H_{k,\delta}\) \(R_pQ\)-periodically to the whole space. Thus
\(\mathbb H_{k,\delta}\) is an admissible periodic two-phase coefficient with
exact volume fraction \(m_p\) of the phase \(\mathbb A\). Let \(\mathbb L_{k,\delta}\) be its
complementary effective Hooke law, in the sense of \eqref{L}.

We now identify the limit of \(\mathbb L_{k,\delta}\) as \(\delta\downarrow0\), for
\(k\) fixed. The coefficient \(\mathbb H_k(y,z)\) falls within the class
covered by \cite[Remark 2.13(iii)]{AllaireBriane96}: its entries are finite
sums of products of \(L^\infty\)-periodic functions depending on the
separate variables \(y\) and \(z\). Adapting the two-scale homogenization result for locally periodic
\(L^\infty\)-coefficients \cite[Theorem 2.11 and Corollary 2.12]{AllaireBriane96} to the
symmetric-gradient case, we obtain that \(
 \mathbb H_{k,\delta}^{\rm raw}
 \)
$H$-converges in the sense of linear elasticity, as $\delta\to0$, to \(\overline{\mathbb H_k}\),
defined as follows: for a.e. \(y\in R_pQ\) and
every \(E\in\mathbb M^{n\times n}_{\rm sym}\),
\[
\bigl\langle \overline{\mathbb H_k}(y)E,E\bigr\rangle
:=
\inf_{w\in H^1_{\rm per}(R_kQ;\mathbb R^n)}
\int_{R_kQ}
\left\langle
\mathbb H_k(y,z)
\bigl(E+e(w)(z)\bigr),
E+e(w)(z)
\right\rangle\,\text{d} z .
\]

Since \(\chi_p\) takes only the values \(0\) and \(1\), the inner
cell problem gives
\[
\overline {\mathbb H_k}(y)
=
\chi_p(y)(\mathbb L_k^{\rm in})^{-1}
+
\bigl(1-\chi_p(y)\bigr)\mathbb B .
\]
Indeed, on the set \(\{\chi_p=1\}\), one has
\(\mathbb H_k(y,z)=\mathbb H_k^{\rm in}(z)\), whose associated primal effective Hooke law is, by duality,
\((\mathbb L_k^{\rm in})^{-1}\); on the set \(\{\chi_p=0\}\), the coefficient
is the constant Hooke law \(\mathbb B\).
We now have to pass from $\mathbb H_{k,\delta}^{\rm raw}$ to the $R_pQ$-periodic correction $\mathbb H_{k,\delta}$. 
By compactness of \(H\)-convergence in linearized elasticity, every sequence
\(\delta_j\downarrow0\) admits a subsequence such that
\(\mathbb H_{k,\delta_j}\) \(H\)-converges to some tensor field \(\mathbb G_k\).
Since
\[
|\mathbb H_{k,\delta_j}-\mathbb H_{k,\delta_j}^{\rm raw}|
\leq
C\mathbf 1_{\{\widetilde\eta_{k,\delta_j}\neq\eta_{k,\delta_j}\}},
\]
and the right-hand side converges strongly to \(0\) in \(L^1(R_pQ)\),
the stability result for \(H\)-convergence in linearized elasticity \cite{DonatoHaddadou07}
implies that \(\mathbb G_k=\overline{\mathbb H_k}\). Hence the whole family
\(\mathbb H_{k,\delta}\) \(H\)-converges to \(\overline{\mathbb H_k}\)
as $\delta\to0$.

For symmetric uniformly elliptic coefficients, \(H\)-convergence in
linearized elasticity yields the \(\Gamma\)-convergence of the associated
quadratic elastic energies. Applied to the periodic cell problems with
prescribed average strain \(E\), this implies convergence of the minimum
values.

For \(E\in\mathbb M^{n\times n}_{\rm sym}\), we denote the primal effective Hooke law
associated with \(\mathbb H_{k,\delta}\) in $R_pQ$ by
\[
\bigl\langle
\mathcal C(\mathbb H_{k,\delta})E,E
\bigr\rangle
:=
\inf_{\varphi\in H^1_{\rm per}(R_pQ;\mathbb R^n)}
\int_{R_pQ}
\left\langle
\mathbb H_{k,\delta}(y)
\bigl(E+e(\varphi)(y)\bigr),
E+e(\varphi)(y)
\right\rangle\,dy,
\]
and analogously for \(\overline{\mathbb H_k}\).

By equicoercivity and $\Gamma$-convergence, we have, for every
\(E\in\mathbb M^{n\times n}_{\rm sym}\),
\[
\bigl\langle \mathcal C(\mathbb H_{k,\delta})E,E\bigr\rangle
\to
\bigl\langle \mathcal C(\overline {\mathbb H_k})E,E\bigr\rangle
\]
as $\delta\to0$, and then
\[
\mathcal C(\mathbb H_{k,\delta})
\to
\mathcal C(\overline {\mathbb H_k})
\]
in the finite-dimensional space of fourth-order tensors.

Passing to the complementary formulation and using the standard duality
identity between the primal and complementary effective Hooke's laws, we get
\[
\mathbb L_{k,\delta}
=
\mathcal C(\mathbb H_{k,\delta})^{-1}
\to
\mathcal C(\overline{\mathbb H_k})^{-1}
=
\mathbb L_{(\mathbb L_k^{\rm in})^{-1},\mathbb B,\theta_p,v_p,R_p,\chi_p}
\]
as \(\delta\to0\).

Since \(\mathbb L_k^{\rm in}\to \mathbb L_{p-1}\) as \(k\to\infty\), and
the tensors are uniformly elliptic, we have
\[
(\mathbb L_k^{\rm in})^{-1}\to \mathbb L_{p-1}^{-1}.
\]
By the continuity of the complementary cell formula with respect to uniformly
elliptic phase tensors, it follows that
\[
\mathbb L_{(\mathbb L_k^{\rm in})^{-1},\mathbb B,\theta_p,v_p,R_p,\chi_p}
\to
\mathbb L_{\mathbb L_{p-1}^{-1},\mathbb B,\theta_p,v_p,R_p,\chi_p}
=
\mathbb L_p,
\]
where the last equality follows from the recursive definition of
\(\mathbb L_p\).

We finally choose the two parameters \(k\) and \(\delta\) along a diagonal
sequence: for every \(k\), we choose \(\delta_k>0\) so small that
\[
\mathcal C(\mathbb H_{k,\delta_k})^{-1}
-
\mathbb L_{(\mathbb L_k^{\rm in})^{-1},\mathbb B,\theta_p,v_p,R_p,\chi_p}
\to0 .
\]
Since
\[
\mathbb L_{(\mathbb L_k^{\rm in})^{-1},\mathbb B,\theta_p,v_p,R_p,\chi_p}
\to
\mathbb L_p,
\]
it follows that
\[
\mathbb L_k^{\rm eff}:=
\mathcal C(\mathbb H_{k,\delta_k})^{-1}
\to
\mathbb L_p .
\]
Thus the periodic two-phase coefficients \(\mathbb H_{k,\delta_k}\), which have
exact volume fraction \(m_p\) of the phase \(\mathbb A\), realize \(\mathbb L_p\). This
concludes the proof.
\end{proof}

\begin{lem} \label{lem:black box} For every $\tau\in\Msn$ and every $\theta\in[0,1]$, it holds \[ F^\mathbb X(\tau,\theta) = \dprb{\mathbb A_0^{-1}\tau,\tau} + (1-\theta) \sup_{h\in\Msn} \bigl\{ 2\dprb{\tau,h} - \dprb{(\mathbb X^{-1}-\mathbb A_0^{-1})^{-1}h,h} - \theta g_c(h) \bigr\}. \] \end{lem}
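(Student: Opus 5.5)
The plan is to prove the two inequalities separately: the lower bound comes from the Hashin--Shtrikman-type estimate of Proposition~\ref{prop:sxesi 6.9}, and the upper bound from optimal finite-rank laminates. Throughout I apply the results of the previous subsection with $\mathbb A:=\mathbb X$ and $\mathbb B:=\mathbb A_0=2\Cbb$. The volume fraction of the phase $\mathbb X$ is $1-\theta$ (the parameter entering $K^{\mathbb X}_\theta$), so the complementary fraction of the phase $\mathbb A_0$ is $\theta$. With $\mathbb B=\mathbb A_0$, the subspaces $V(\mathbb B,k)$ and the function $g_c$ of Proposition~\ref{prop:sxesi 6.9} coincide with the ones used in this subsection, since there $\mathbb B:=2\Cbb=\mathbb A_0$. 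Set $\mathbb D:=\mathbb X^{-1}-\mathbb A_0^{-1}$, which is positive definite because $\mathbb X<\mathbb A_0$.

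\emph{Degenerate cases.} If $\theta=1$, then $\chi=0$ a.e. A direct check (take $\zeta\equiv\tau$ and use Jensen's inequality) gives $\mathbb L=\mathbb A_0^{-1}$, which matches the formula because the prefactor $1-\theta$ vanishes. If $\theta=0$, then $\chi=1$ a.e. and $\mathbb L=\mathbb X^{-1}$. On the right-hand side the supremum equals $\sup_h\{2\dprb{\tau,h}-\dprb{\mathbb D^{-1}h,h}\}=\dprb{\mathbb D\tau,\tau}$, so the right-hand side is $\dprb{\mathbb A_0^{-1}\tau,\tau}+\dprb{\mathbb D\tau,\tau}=\dprb{\mathbb X^{-1}\tau,\tau}$, as required. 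From now on $\theta\in(0,1)$.

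\emph{Lower bound.} Take any $\mathbb L=\mathbb L_{\mathbb X,\mathbb A_0,1-\theta,v,R,\chi}\in K^{\mathbb X}_\theta$. Proposition~\ref{prop:sxesi 6.9}, applied with volume fraction $1-\theta$ in place of $\theta$, gives
\[
\dprb{\mathbb L\tau,\tau}\geq \dprb{\mathbb A_0^{-1}\tau,\tau}+(1-\theta)\sup_{h}\bigl\{2\dprb{h,\tau}-\dprb{\mathbb D^{-1}h,h}-\theta g_c(h)\bigr\}.
\]
The supremum is finite: the map $h\mapsto -\dprb{\mathbb D^{-1}h,h}$ is strictly concave and coercive, and $g_c\geq 0$ (alternatively, invoke Proposition~\ref{prop:kyrio neo apotelesma tou paper}). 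Taking the infimum over $K^{\mathbb X}_\theta$ yields ``$\geq$''.

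\emph{Upper bound.} Proposition~\ref{prop:proposition regarding optimality for H-S} (again with $\mathbb A=\mathbb X$, $\mathbb B=\mathbb A_0$, fraction $1-\theta$) provides a complementary $p$-laminate $\mathbb L_p$ that attains equality in the previous display. Its recursive volume fraction of the phase $\mathbb X$ equals $1-\theta$, by the last assertion of Proposition~\ref{prop:microstructure for many unit vectors}. However, $\mathbb L_p$ is not itself of the form $\mathbb L_{\mathbb X,\mathbb A_0,1-\theta,v,R,\chi}$. So I invoke Lemma~\ref{realization}, which produces periodic two-phase coefficients $\widehat{\mathbb H}_k=\chi_k\mathbb X+(1-\chi_k)\mathbb A_0$ with exact fraction $m_p=1-\theta$ on a unit-volume cell. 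These satisfy $\mathbb L^{\rm eff}_k\to\mathbb L_p$.

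The point to check, and what I expect to be the main obstacle, is that each $\mathbb L^{\rm eff}_k$ really lies in $K^{\mathbb X}_\theta$. In the proof of Lemma~\ref{realization} the cell is the rotated cube $R_pQ=Q_{v_p,R_p}$, with $R_pe_{i_0}=v_p$, and $\chi_k$ is $R_pe_j$-periodic with average $1-\theta$. This is exactly the admissible data $(v,R,\chi)$ of Definition~\ref{defn:vasikotatos orismos}, and the effective law is defined there by the same complementary cell problem~\eqref{L}. Granting this, we get
\[
F^{\mathbb X}(\tau,\theta)\leq\lim_{k}\dprb{\mathbb L^{\rm eff}_k\tau,\tau}=\dprb{\mathbb L_p\tau,\tau},
\]
which equals the right-hand side of the statement. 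This gives ``$\leq$'' and completes the proof.
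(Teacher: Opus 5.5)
Your proposal is correct and follows the paper's proof essentially step for step: the cases $\theta\in\{0,1\}$ via $K^{\mathbb X}_0=\{\mathbb X^{-1}\}$ and $K^{\mathbb X}_1=\{\mathbb A_0^{-1}\}$, the lower bound from Proposition~\ref{prop:sxesi 6.9} with $\mathbb A=\mathbb X$, $\mathbb B=\mathbb A_0$ and volume fraction $1-\theta$, and the upper bound from the optimal $p$-laminate of Proposition~\ref{prop:proposition regarding optimality for H-S}, approximated by elements of $K^{\mathbb X}_\theta$ through Lemma~\ref{realization} with $m_p=1-\theta$. Your explicit computation that at $\theta=0$ the supremum equals $\dprb{(\mathbb X^{-1}-\mathbb A_0^{-1})\tau,\tau}$, and your check that the realizing cells $R_pQ$ and characteristic functions are admissible data $(v,R,\chi)$ in Definition~\ref{defn:vasikotatos orismos}, only make explicit what the paper leaves implicit.
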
 \begin{proof} For $\theta=0$ and $\theta=1$ the formula follows directly from \[ K^\mathbb X_0=\{\mathbb X^{-1}\}, \qquad K^\mathbb X_1=\{\mathbb A_0^{-1}\}. \] We therefore assume $\theta\in(0,1)$. By Proposition~\ref{prop:sxesi 6.9}, applied with $\mathbb A=\mathbb X$, $\mathbb B=\mathbb A_0$, and volume fraction $1-\theta$, we have \[ F^\mathbb X(\tau,\theta) \geq \dprb{\mathbb A_0^{-1}\tau,\tau} + (1-\theta) \sup_{h\in\Msn} \bigl\{ 2\dprb{\tau,h} - \dprb{(\mathbb X^{-1}-\mathbb A_0^{-1})^{-1}h,h} - \theta g_c(h) \bigr\}. \] 

We next prove the opposite inequality. 
	By Proposition ~\ref{prop:proposition regarding optimality for H-S}, applied with volume fraction \(1-\theta\), there exists
	a complementary \(p\)-laminate \(\mathbb L_p\) with respect to \(\mathbb X\) and \(\mathbb A_0\)
	such that
	\begin{equation}\label{first inequality}
	\begin{aligned}
	\langle \mathbb L_p\tau,\tau\rangle
	&=
	\langle \mathbb A_0^{-1}\tau,\tau\rangle  \\
	&\quad
	+(1-\theta)
	\sup_{h\in\mathbb M^{n\times n}_{\rm sym}}
	\left\{
	2\langle\tau,h\rangle
	-
	\left\langle(\mathbb X^{-1}-\mathbb A_0^{-1})^{-1}h,h\right\rangle
	-
	\theta g_c(h)
	\right\}.
	\end{aligned}
	\end{equation}
	By Lemma \ref{realization} with
	$\mathbb A=\mathbb X$, $\mathbb B=\mathbb A_0$, $m_p=1-\theta$,
	 there exists a sequence
	\(\mathbb L_k\in K^\mathbb X_\theta\) such that
	\(
	\mathbb L_k\to \mathbb L_p .
	\)
	Hence, by the definition of \(F^\mathbb X(\tau,\theta)\) and by the continuity
	of the map \(\mathbb L\mapsto\langle \mathbb L\tau,\tau\rangle\),
	\[
	F^\mathbb X(\tau,\theta)
	=
	\inf_{\mathbb L\in K^\mathbb X_\theta}\langle \mathbb L\tau,\tau\rangle
	\leq
	\lim_{k\to\infty}\langle \mathbb L_k\tau,\tau\rangle
	=
	\langle \mathbb L_p\tau,\tau\rangle .
	\]
	Combining this with \eqref{first inequality} gives the claimed identity.
\end{proof}
\begin{prop} \label{prop:isotita sto orio kathos paei sto 0} The limit of $F^\mathbb X(\tau,\theta)$ as $\mathbb X\to0$ exists. Denoting it by $F(\tau,\theta)$, we have, for every $\theta>0$, \[ F(\tau,\theta) = \dprb{\mathbb A_0^{-1}\tau,\tau} + (1-\theta) \sup_{h\in\Msn} \bigl\{ 2\dprb{\tau,h} - \theta g_c(h) \bigr\}. \] Moreover, $F(\tau,\theta)$ is finite for $\theta>0$, while \[ F(0,\theta)=0 \qquad\text{for every }\theta\in[0,1], \] and \[ F(\tau,0)=+\infty \qquad\text{for every }\tau\neq0. \] \end{prop}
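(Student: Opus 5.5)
The plan is to start from the explicit formula of Lemma~\ref{lem:black box} and pass to the limit $\mathbb X\to0$ inside the supremum. Write $\mathbb D_{\mathbb X}:=(\mathbb X^{-1}-\mathbb A_0^{-1})^{-1}$. As $\mathbb X\to0$ (in the sense of Hooke's laws $\mathbb X<\mathbb A_0$ tending to zero), $\mathbb X^{-1}$ blows up, so $\mathbb D_{\mathbb X}\to0$ as a positive semidefinite tensor. More precisely, $\mathbb X^{-1}-\mathbb A_0^{-1}\geq (\lambda_{\min}(\mathbb X^{-1})-\lambda_{\max}(\mathbb A_0^{-1}))\,\mathrm{Id}$, so $\|\mathbb D_{\mathbb X}\|\to0$.

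Define, for fixed $\tau$ and $\theta$,
\[
\Phi_{\mathbb X}(h):=2\dprb{\tau,h}-\dprb{\mathbb D_{\mathbb X}h,h}-\theta g_c(h),\qquad \Phi_0(h):=2\dprb{\tau,h}-\theta g_c(h).
\]
Since $\mathbb D_{\mathbb X}\geq0$, we have $\Phi_{\mathbb X}\leq\Phi_0$, and hence $\limsup_{\mathbb X\to0}\sup\Phi_{\mathbb X}\leq\sup\Phi_0$. Moreover $\Phi_{\mathbb X}$ increases as $\mathbb D_{\mathbb X}$ decreases.

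For the converse, fix any $h$. Then $\Phi_{\mathbb X}(h)\to\Phi_0(h)$, so $\liminf_{\mathbb X\to0}\sup\Phi_{\mathbb X}\geq\Phi_0(h)$. Taking the supremum over $h$ gives the matching lower bound. No compactness of maximizers is needed, since the argument is pointwise in $h$ plus monotonicity. Thus the limit exists, with the value $+\infty$ allowed, and equals $\sup\Phi_0$. This yields the stated formula for $\theta>0$.

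Finiteness for $\theta>0$ is exactly Proposition~\ref{prop:kyrio neo apotelesma tou paper}: the supremum equals $\frac4\theta g_c^*(\tau)<\infty$.

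If $\tau=0$, the supremum in Lemma~\ref{lem:black box} is $0$, attained at $h=0$, since all the subtracted terms are nonnegative: $\mathbb D_{\mathbb X}\geq0$ and $g_c\geq0$. So $F^{\mathbb X}(0,\theta)=0$ for every $\mathbb X$ and every $\theta\in[0,1]$, including the endpoints, where the formula still holds directly. Hence $F(0,\theta)=0$.

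For $\theta=0$, Lemma~\ref{lem:black box} gives
\[
F^{\mathbb X}(\tau,0)=\dprb{\mathbb A_0^{-1}\tau,\tau}+\sup_h\bigl\{2\dprb{\tau,h}-\dprb{\mathbb D_{\mathbb X}h,h}\bigr\}=\dprb{\mathbb A_0^{-1}\tau,\tau}+\dprb{\mathbb D_{\mathbb X}^{-1}\tau,\tau}.
\]
This is just $\dprb{\mathbb X^{-1}\tau,\tau}$, consistent with $K^{\mathbb X}_0=\{\mathbb X^{-1}\}$. For $\tau\neq0$ it tends to $+\infty$ as $\mathbb X\to0$, since $\mathbb X^{-1}\geq\lambda_{\max}(\mathbb X)^{-1}\mathrm{Id}$.

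The main obstacle is making precise what ``$\mathbb X\to0$'' means so that $\mathbb D_{\mathbb X}\to0$ and the limit is independent of the path. I would take it as $\|\mathbb X\|\to0$ among Hooke's laws $\mathbb X<\mathbb A_0$, and justify $\|\mathbb D_{\mathbb X}\|\leq(\lambda_{\min}(\mathbb X^{-1})-\|\mathbb A_0^{-1}\|)^{-1}\to0$. The two-sided bound above then makes the limit unique along any such family.
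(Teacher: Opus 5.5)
Your proposal is correct and follows essentially the same route as the paper. Both start from Lemma~\ref{lem:black box} and use that $(\mathbb X^{-1}-\mathbb A_0^{-1})^{-1}$ is positive definite and tends to $0$: dropping that term gives the upper bound, passing to the limit at fixed $h$ and then taking the supremum gives the lower bound, finiteness comes from Proposition~\ref{prop:kyrio neo apotelesma tou paper}, and $\theta=0$ is handled via $F^{\mathbb X}(\tau,0)=\dprb{\mathbb X^{-1}\tau,\tau}$. Your explicit eigenvalue bound on $\mathbb D_{\mathbb X}$ (the paper writes $\mathbb X(\mathbb I-\mathbb A_0^{-1}\mathbb X)^{-1}$ instead) and your direct treatment of $\tau=0$ are only cosmetic differences.
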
 \begin{proof} We first assume that $\theta>0$. By Lemma~\ref{lem:black box}, for every Hooke's law $\mathbb X<\mathbb A_0$, \[ F^\mathbb X(\tau,\theta) = \dprb{\mathbb A_0^{-1}\tau,\tau} + (1-\theta) \sup_{h\in\Msn} \bigl\{ 2\dprb{\tau,h} - \dprb{(\mathbb X^{-1}-\mathbb A_0^{-1})^{-1}h,h} - \theta g_c(h) \bigr\}. \] Set \[ B_\mathbb X:=(\mathbb X^{-1}-\mathbb A_0^{-1})^{-1}. \] Since $\mathbb X<\mathbb A_0$, the operator $B_\mathbb X$ is positive definite. Moreover, as $\mathbb X\to0$, \[ B_\mathbb X\to0. \] Indeed, for $\mathbb X$ small enough, \[ B_\mathbb X = \mathbb X(\mathbb I-\mathbb A_0^{-1}\mathbb X)^{-1}, \] and the inverse is uniformly bounded. Since $B_\mathbb X$ is positive definite, we immediately get \[ F^\mathbb X(\tau,\theta) \leq \dprb{\mathbb A_0^{-1}\tau,\tau} + (1-\theta) \sup_{h\in\Msn} \bigl\{ 2\dprb{\tau,h} - \theta g_c(h) \bigr\}. \] Taking the limsup yields \[ \limsup_{\mathbb X\to0}F^\mathbb X(\tau,\theta) \leq \dprb{\mathbb A_0^{-1}\tau,\tau} + (1-\theta) \sup_{h\in\Msn} \bigl\{ 2\dprb{\tau,h} - \theta g_c(h) \bigr\}. \]

Conversely, fixing $h\in\Msn$ and using again Lemma~\ref{lem:black box}, we have \[ F^\mathbb X(\tau,\theta) \geq \dprb{\mathbb A_0^{-1}\tau,\tau} + (1-\theta) \bigl\{ 2\dprb{\tau,h} - \dprb{B_\mathbb X h,h} - \theta g_c(h) \bigr\}. \] Letting $\mathbb X\to0$ and using $B_\mathbb X\to0$, we obtain \[ \liminf_{\mathbb X\to0}F^\mathbb X(\tau,\theta) \geq \dprb{\mathbb A_0^{-1}\tau,\tau} + (1-\theta) \bigl\{ 2\dprb{\tau,h} - \theta g_c(h) \bigr\}. \] Taking the supremum over $h\in\Msn$ gives \[ \liminf_{\mathbb X\to0}F^\mathbb X(\tau,\theta) \geq \dprb{\mathbb A_0^{-1}\tau,\tau} + (1-\theta) \sup_{h\in\Msn} \bigl\{ 2\dprb{\tau,h} - \theta g_c(h) \bigr\}. \] The desired formula follows. The finiteness for $\theta>0$ is a consequence of Proposition~\ref{prop:kyrio neo apotelesma tou paper}. It remains to consider $\theta=0$. By the definition of $K^\mathbb X_0$, we have \[ K^\mathbb X_0=\{\mathbb X^{-1}\}, \] and hence \[ F^\mathbb X(\tau,0)=\dprb{\mathbb X^{-1}\tau,\tau}. \] Therefore $F^\mathbb X(0,0)=0$ for every $\mathbb X$, while, if $\tau\neq0$, \[ \dprb{\mathbb X^{-1}\tau,\tau}\to+\infty \qquad\text{as }\mathbb X\to0. \] This proves the statement for $\theta=0$ and concludes the proof.\end{proof}

 \begin{prop} \label{prop:sxetika me tin syglisi sto Fl+} For every $\tau\in\Msn$, $F_l^\mathbb X(\tau)$ converges to $F_l(\tau)$ as $\mathbb X\to0$, where $F^\mathbb X_l$ and $F_l$ have been defined in~\eqref{FalFl}. Moreover, $F_l(\tau)$ is finite. \end{prop}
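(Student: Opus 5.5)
The plan is to pass to the limit $\mathbb X\to0$ inside the infimum over $\theta\in[0,1]$ in~\eqref{FalFl}. We use the explicit formula of Lemma~\ref{lem:black box} and the monotonicity of $F^\mathbb X(\tau,\theta)$ in the matrix $B_\mathbb X:=(\mathbb X^{-1}-\mathbb A_0^{-1})^{-1}$. The case $\tau=0$ is trivial: $F^\mathbb X(0,\theta)=0=F(0,\theta)$ by Lemma~\ref{lem:black box} and Proposition~\ref{prop:isotita sto orio kathos paei sto 0}, so both infima equal $0$ (attained at $\theta=0$). From now on we fix $\tau\neq0$.

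\emph{Upper bound.} As in the proof of Proposition~\ref{prop:isotita sto orio kathos paei sto 0}, since $B_\mathbb X$ is positive definite we have $F^\mathbb X(\tau,\theta)\le F(\tau,\theta)$ for every $\theta\in(0,1]$. Hence
\[
F_l^\mathbb X(\tau)\le \inf_{\theta\in(0,1]}\{F(\tau,\theta)+l\theta\}=F_l(\tau),
\]
where the last equality uses $F(\tau,0)=+\infty$. Finiteness of $F_l(\tau)$ follows by evaluating at $\theta=1$: $F(\tau,1)=\dprb{\mathbb A_0^{-1}\tau,\tau}<\infty$, so $F_l(\tau)\le \dprb{\mathbb A_0^{-1}\tau,\tau}+l$. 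It follows that $\limsup_{\mathbb X\to0}F_l^\mathbb X(\tau)\le F_l(\tau)$.

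\emph{Lower bound.} This is the main obstacle, because the convergence $F^\mathbb X(\cdot,\theta)\to F(\cdot,\theta)$ is not uniform near $\theta=0$, where the limit blows up. Take any $\mathbb X_m\to0$ and choose near-minimizers $\theta_m$ with
\[
F^{\mathbb X_m}(\tau,\theta_m)+l\theta_m\le F^{\mathbb X_m}_l(\tau)+1/m,
\]
and pass to a subsequence $\theta_m\to\bar\theta$.

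First we show $\bar\theta>0$. Fix $h$ with $\dprb{\tau,h}>0$. By Lemma~\ref{lem:black box},
\[
F^{\mathbb X_m}(\tau,\theta_m)\ge (1-\theta_m)\bigl\{2\dprb{\tau,th}-t^2\dprb{B_{\mathbb X_m}h,h}-\theta_m t^2 g_c(h)\bigr\}\qquad\text{for every }t>0.
\]
If $\theta_m\to0$, choose $t=t_m\to\infty$ slowly enough that $t_m^2(\dprb{B_{\mathbb X_m}h,h}+\theta_m g_c(h))\to0$. The right-hand side then tends to $+\infty$, contradicting the uniform upper bound obtained above. Hence $\bar\theta>0$.

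Now fix any $h$. The lower bound of Lemma~\ref{lem:black box} gives
\[
F^{\mathbb X_m}(\tau,\theta_m)\ge \dprb{\mathbb A_0^{-1}\tau,\tau}+(1-\theta_m)\{2\dprb{\tau,h}-\dprb{B_{\mathbb X_m}h,h}-\theta_m g_c(h)\},
\]
and the right-hand side converges to $\dprb{\mathbb A_0^{-1}\tau,\tau}+(1-\bar\theta)\{2\dprb{\tau,h}-\bar\theta g_c(h)\}$. Taking the supremum over $h$ and using Proposition~\ref{prop:isotita sto orio kathos paei sto 0}, we get
\[
\liminf_m F^{\mathbb X_m}_l(\tau)\ge F(\tau,\bar\theta)+l\bar\theta\ge F_l(\tau).
\]
Since the sequence $\mathbb X_m$ was arbitrary, this proves the convergence $F^\mathbb X_l(\tau)\to F_l(\tau)$.
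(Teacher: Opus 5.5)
Your proof is correct and essentially the paper's argument. The upper bound comes from fixing $\theta$: your pointwise comparison $F^{\mathbb X}(\tau,\theta)\le F(\tau,\theta)$ replaces the paper's passage to the limit, and combined with $F^{\mathbb X}_l(\tau)\ge0$ it also gives $F_l(\tau)\ge0$, the lower half of finiteness, which you leave implicit. The lower bound likewise comes from near-minimizers $\theta_m\to\bar\theta$, the fixed-$h$ Hashin--Shtrikman inequality and a supremum over $h$. Your separate step excluding $\bar\theta=0$ is valid but redundant: for $\bar\theta=0$ the fixed-$h$ limiting bound is $\dprb{\mathbb A_0^{-1}\tau,\tau}+2\dprb{\tau,h}$, whose supremum over $h$ is already $+\infty=F(\tau,0)$.
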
 \begin{proof} Since $F_l(\tau)\leq F(\tau,1)+l<+\infty$ and $F_l(\tau)\geq0$, the quantity $F_l(\tau)$ is finite. Fix $\theta\in[0,1]$. By the definition of $F_l^\mathbb X$, \[ F_l^\mathbb X(\tau)\leq F^\mathbb X(\tau,\theta)+l\theta. \] Taking the limsup as $\mathbb X\to0$ and using Proposition~\ref{prop:isotita sto orio kathos paei sto 0}, we obtain \[ \limsup_{\mathbb X\to0}F_l^\mathbb X(\tau) \leq F(\tau,\theta)+l\theta. \] Taking the infimum over $\theta\in[0,1]$ gives \[ \limsup_{\mathbb X\to0}F_l^\mathbb X(\tau)\leq F_l(\tau). \]

We now prove the {other inequality}. Let $\mathbb X_n\to0$ be such that \[ \liminf_{\mathbb X\to0}F_l^\mathbb X(\tau) = \lim_{n\to\infty}F_l^{\mathbb X_n}(\tau). \] For every $n$, choose $\theta_n\in[0,1]$ such that \[ F_l^{\mathbb X_n}(\tau)+\frac1n \geq F^{\mathbb X_n}(\tau,\theta_n)+l\theta_n. \] Up to a subsequence, we may assume that $\theta_n\to\theta_0\in[0,1]$. Set \[ B_{\mathbb X_n}:= (\mathbb X_n^{-1}-\mathbb A_0^{-1})^{-1}. \] As in the proof of Proposition~\ref{prop:isotita sto orio kathos paei sto 0}, we have \[ B_{\mathbb X_n}\to0. \] By Proposition~\ref{prop:sxesi 6.9}, for every $h\in\Msn$, \[ F^{\mathbb X_n}(\tau,\theta_n) \geq \dprb{\mathbb A_0^{-1}\tau,\tau} + (1-\theta_n) \bigl\{ 2\dprb{\tau,h} - \dprb{B_{\mathbb X_n}h,h} - \theta_n g_c(h) \bigr\}. \] Adding $l\theta_n$ and passing to the limit gives \[ \begin{aligned} \liminf_{\mathbb X\to0}F_l^\mathbb X(\tau) &\geq \dprb{\mathbb A_0^{-1}\tau,\tau} + (1-\theta_0) \bigl\{ 2\dprb{\tau,h} - \theta_0 g_c(h) \bigr\} + l\theta_0. \end{aligned} \] Since this holds for every $h\in\Msn$, we obtain \[ \liminf_{\mathbb X\to0}F_l^\mathbb X(\tau) \geq F(\tau,\theta_0)+l\theta_0. \] Therefore, \[ \liminf_{\mathbb X\to0}F_l^\mathbb X(\tau) \geq F_l(\tau). \] Combining the limsup and liminf inequalities yields \[ \lim_{\mathbb X\to0}F_l^\mathbb X(\tau)=F_l(\tau), \] as claimed. \end{proof}

\begin{prop} \label{peri Fl} For every $\tau\in\Msn$, {it holds that} \[ F_l(\tau)= \begin{cases} \dprb{\mathbb A_0^{-1}\tau,\tau}+l & \text{if } \rho_l(\tau)\geq1,\\[3pt] \dprb{\mathbb A_0^{-1}\tau,\tau} + l\rho_l(\tau)\bigl(2-\rho_l(\tau)\bigr) & \text{if } \rho_l(\tau)\leq1, \end{cases} \] where \[ \rho_l(\tau):=\frac{2}{\sqrt l}\sqrt{g_c^*(\tau)}, \qquad g_c^*(\tau):= \sup_{h\in\Msn} \bigl\{ \dprb{\tau,h}-g_c(h) \bigr\}. \] \end{prop}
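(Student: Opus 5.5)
We reduce the computation of $F_l$ to a one-dimensional minimization in $\theta$ by means of the explicit formula of Proposition~\ref{prop:isotita sto orio kathos paei sto 0}. For $\theta\in(0,1]$ we have
\[
F(\tau,\theta)=\dprb{\mathbb A_0^{-1}\tau,\tau}+(1-\theta)\sup_{h\in\Msn}\bigl\{2\dprb{\tau,h}-\theta g_c(h)\bigr\}.
\]
The last computation in the proof of Proposition~\ref{prop:kyrio neo apotelesma tou paper} gives $\sup_h\{2\dprb{\tau,h}-\theta g_c(h)\}=\frac4\theta g_c^*(\tau)$. With the notation $\rho_l(\tau)^2=\frac4l g_c^*(\tau)$ this reads
\[
F(\tau,\theta)+l\theta=\dprb{\mathbb A_0^{-1}\tau,\tau}+l\Bigl(\frac{(1-\theta)\rho_l(\tau)^2}{\theta}+\theta\Bigr),\qquad\theta\in(0,1].
\]
For $\theta=0$, Proposition~\ref{prop:isotita sto orio kathos paei sto 0} gives $F(\tau,0)=+\infty$ if $\tau\neq0$ and $F(0,0)=0$. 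It therefore remains to minimize $\psi(\theta):=\frac{r^2}{\theta}-r^2+\theta$ over $\theta\in(0,1]$, where $r:=\rho_l(\tau)\geq0$. The case $\tau=0$ (equivalently $r=0$, by Proposition~\ref{prop:kyrio neo apotelesma tou paper}) is handled separately.

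Since $\psi'(\theta)=1-r^2/\theta^2$, $\psi$ is convex on $(0,\infty)$ with minimizer $\theta=r$. If $0<r\leq1$, then $\theta=r\in(0,1]$ is admissible and $\min\psi=r-r^2+r=r(2-r)$. This gives $F_l(\tau)=\dprb{\mathbb A_0^{-1}\tau,\tau}+l\rho_l(2-\rho_l)$. If $r\geq1$, then $\psi$ is decreasing on $(0,1]$, so the minimum is attained at $\theta=1$ and equals $\psi(1)=1$. This gives $\dprb{\mathbb A_0^{-1}\tau,\tau}+l$. At $r=1$ the two expressions agree, which is consistent with the overlapping case distinction in the statement. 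The endpoint $\theta=0$ never competes when $\tau\neq0$, since there the value is $+\infty$.

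For $\tau=0$ we have $\rho_l(0)=0$, so the claimed value is $0$. Indeed $F(0,\theta)=0$ for every $\theta$, and $\inf_\theta l\theta=0$ is attained at $\theta=0$. This matches the formula in the case $\rho_l\leq1$.

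The only point that needs care is the scaling identity $\sup_h\{2\dprb{\tau,h}-\theta g_c(h)\}=\frac4\theta g_c^*(\tau)$. It relies on the positive $2$-homogeneity of $g_c$ (substitute $h=h'/\sqrt\theta$) and on finiteness of $g_c^*$, and both are already established in Proposition~\ref{prop:kyrio neo apotelesma tou paper}. The rest is elementary calculus.
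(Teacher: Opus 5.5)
Your proposal is correct and matches the paper's proof: for $\tau\neq0$ you discard $\theta=0$ using $F(\tau,0)=+\infty$, use the scaling identity $\sup_h\{2\langle\tau,h\rangle-\theta g_c(h)\}=\frac4\theta g_c^*(\tau)$, and minimize the resulting one-variable function at $\theta_*=\min\{1,\rho_l(\tau)\}$. The case $\tau=0$ is treated separately, exactly as in the paper.
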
 \begin{proof} The formula is stated without proof in~\cite[(7.4)]{AllaireKohn93}. We include the proof for completeness. The case $\tau=0$ is immediate. Indeed, by Proposition~\ref{prop:isotita sto orio kathos paei sto 0}, \[ F_l(0) = \inf_{\theta\in[0,1]} \bigl\{ F(0,\theta)+l\theta \bigr\} = \inf_{\theta\in[0,1]}l\theta = 0, \] and $\rho_l(0)=0$.

We now assume that $\tau\neq0$. Since $F(\tau,0)=+\infty$, again by Proposition~\ref{prop:isotita sto orio kathos paei sto 0} we have \[ F_l(\tau) = \inf_{\theta\in(0,1]} \bigl\{ F(\tau,\theta)+l\theta \bigr\}. \] For $\theta\in(0,1]$, Proposition~\ref{prop:isotita sto orio kathos paei sto 0} gives \[ F(\tau,\theta) = \dprb{\mathbb A_0^{-1}\tau,\tau} + (1-\theta) \sup_{h\in\Msn} \bigl\{ 2\dprb{\tau,h} - \theta g_c(h) \bigr\}. \] Since $g_c$ is positively $2$-homogeneous, \[ \sup_{h\in\Msn} \bigl\{ 2\dprb{\tau,h} - \theta g_c(h) \bigr\} = \frac4\theta g_c^*(\tau). \] Therefore, \[ F_l(\tau) = \inf_{\theta\in(0,1]} \left\{ \dprb{\mathbb A_0^{-1}\tau,\tau} + \frac{4(1-\theta)}{\theta}g_c^*(\tau) + l\theta \right\}. \] Set \[ a:=\dprb{\mathbb A_0^{-1}\tau,\tau}, \qquad b:=4g_c^*(\tau). \] Then, \[ F_l(\tau) = \inf_{\theta\in(0,1]} \left\{ a+\frac{b(1-\theta)}{\theta}+l\theta \right\} = a-b+\inf_{\theta\in(0,1]} \left\{ \frac b\theta+l\theta \right\}. \] Since $\tau\neq0$, Proposition~\ref{prop:kyrio neo apotelesma tou paper} gives $g_c^*(\tau)>0$, hence $b>0$. The function \[ \theta\mapsto \frac b\theta+l\theta \] is minimized on $(0,1]$ at \[ \theta_*=\min\left\{1,\sqrt{\frac bl}\right\}. \] By the definition of $\rho_l$, we have \[ \sqrt{\frac bl} = \frac{2}{\sqrt l}\sqrt{g_c^*(\tau)} = \rho_l(\tau). \] If $\rho_l(\tau)\geq1$, then $\theta_*=1$, and hence \[ F_l(\tau)=a-b+b+l=a+l = \dprb{\mathbb A_0^{-1}\tau,\tau}+l. \] If $\rho_l(\tau)\leq1$, then $\theta_*=\rho_l(\tau)$, and using \[ b=l\rho_l(\tau)^2, \] we get \[ \begin{aligned} F_l(\tau) &= a+\frac{b(1-\rho_l(\tau))}{\rho_l(\tau)} + l\rho_l(\tau) \\ &= a+l\rho_l(\tau)(1-\rho_l(\tau)) + l\rho_l(\tau) \\ &= a+l\rho_l(\tau)\bigl(2-\rho_l(\tau)\bigr). \end{aligned} \] This proves the formula. \end{proof}

The following results concern the well-known notion of (symmetric) $\dive$-quasiconvexity~\cite{FonsecaMuller99}, but we give a complete account because it is important to define the notions in the correct order, which is not always clear in the literature (also see the remarks after the statement of Proposition~\ref{peri Fl kai Qdiv}).

\begin{defn}
A locally bounded Borel measurable function $f \colon \Msn \to \R$ is said {to be} \emph{symmetric $\dive$-quasiconvex} if for any $\varphi  \in\Crm^\infty(\R^n;\Msn)$ with $\varphi(y+e_{j})=\varphi(y)$ for all $j \in \left\{1,2,...,n\right\}$ and every $y \in \R^n$, such that $\dive \varphi=0$ in $\R^n$, it holds {that}
$$f\biggl(\int_{Q} \varphi(x) \dd x\biggr) \leq \int_{Q}f(\varphi(x)) \dd x.$$
\end{defn}
By the standard cell-independence property of $\mathcal A$-quasiconvexity for constant-rank operators, see~\cite[Lemma 8]{Raita2019}, the cube $Q$ in the definition may be replaced by any rotated cube $RQ$, with periodicity understood with respect to the lattice generated by $Re_1,\dots,Re_n$.} 

\begin{defn}
 Let $h \colon \Msn\to \R$ be a locally bounded, bounded below, Borel measurable function. We define, the symmetric div-quasiconvex envelope of $h$ as
 $$Qh(\tau):=\sup \, \setB{g(\tau)}{ g \text{ is  \emph{(symmetric) $\dive$-quasiconvex} and } g\leq h},$$ for any $\tau \in \Msn.$
\end{defn}

\begin{lem}
	\label{lem:sxetika me rank n-1 klp}
	Let $f\colon \Msn\to\R$ be a locally bounded Borel measurable function {that} is symmetric $\dive$-quasiconvex. Then $f$ is continuous. Moreover, let $h\colon\Msn\to\R$ be locally bounded, bounded below, and Borel measurable. Then $Qh$ is symmetric $\dive$-quasiconvex.
\end{lem}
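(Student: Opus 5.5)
The plan is to prove continuity first, by showing that symmetric $\dive$-quasiconvexity implies convexity along the directions of the wave cone $\Lambda_{\dive}$. Then $Qh$ is handled as a pointwise supremum of continuous functions.

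\emph{Step 1: convexity along wave-cone directions.} Fix $k\in\mathbb Z^n\setminus\{0\}$ and $M\in\Msn$ with $Mk=0$.
\begin{itemize}
\item For a smooth $1$-periodic $g\colon\R\to\R$ with zero mean, the field $\varphi(y):=\tau+g(k\cdot y)M$ is smooth and $Q$-periodic.
\item It is divergence-free, because $\dive\varphi=g'(k\cdot y)Mk=0$.
\item Its average over $Q$ is $\tau$.
\end{itemize}
Quasiconvexity therefore gives $f(\tau)\le\int_0^1 f(\tau+g(s)M)\dd s$.

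Now take $\lambda\in(0,1)$ and $a,b\in\R$ with $\lambda a+(1-\lambda)b=0$. Choose smooth $g_m$ that equal $a$ on a set of measure $\lambda-1/m$ and $b$ on a set of measure $1-\lambda-1/m$, have mean zero, and are bounded uniformly in $m$. Since $f$ is locally bounded, the contribution of the transition sets is $O(1/m)$. Letting $m\to\infty$ yields
\[
f(\tau)\le\lambda f(\tau+aM)+(1-\lambda)f(\tau+bM).
\]
Hence $t\mapsto f(\tau+tM)$ is convex for every such $M$.

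\emph{Step 2: a basis of admissible directions.} I will use the basis of $\Msn$ given by
\[
e_i\otimes e_i\ (1\le i\le n), \qquad (e_i+e_j)\otimes(e_i+e_j)\ (i<j).
\]
For $n\ge2$ each of these is annihilated by a nonzero integer vector:
\begin{itemize}
\item $e_i\otimes e_i$ is annihilated by $e_l$ for any $l\neq i$;
\item $(e_i+e_j)\otimes(e_i+e_j)$ is annihilated by $e_i-e_j$.
\end{itemize}
So Step 1 applies to every basis direction with the standard lattice, and the cell-independence remark is not even needed.

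\emph{Step 3: local Lipschitz continuity.} We now know that $f$, written in these linear coordinates, is separately convex in each coordinate and locally bounded. The classical argument for separately convex functions then applies. On a slightly larger box, one-dimensional convexity bounds each difference quotient along a coordinate line by $2\sup|f|$ over the larger box divided by the gap between the boxes. Changing one coordinate at a time then gives a local Lipschitz bound, and in particular continuity. I expect Steps 1 and 3 to be the main technical points. The key issue is to pass from smooth test fields to two-valued profiles using only local boundedness and Borel measurability, which is why the approximating $g_m$ must be uniformly bounded.

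\emph{Step 4: $Qh$ is symmetric $\dive$-quasiconvex.} The admissible family is nonempty, since the constant $\inf h$ is quasiconvex and lies below $h$. Hence $\inf h\le Qh\le h$, so $Qh$ is real-valued, bounded below, and locally bounded.

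By Step 3 every admissible $g$ is continuous. So $Qh$ is a supremum of continuous functions, hence lower semicontinuous and in particular Borel. Now fix an admissible periodic divergence-free $\varphi$ and any admissible $g$. Then
\[
g\Bigl(\int_Q\varphi\Bigr)\le\int_Q g(\varphi)\le\int_Q Qh(\varphi),
\]
where the last integral is well defined because $Qh\circ\varphi$ is Borel and bounded on the compact range of $\varphi$. Taking the supremum over $g$ gives $Qh\bigl(\int_Q\varphi\bigr)\le\int_Q Qh(\varphi)$, which is the required quasiconvexity.
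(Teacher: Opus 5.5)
Your proof is correct. Step 4 is essentially the paper's argument for the second assertion: the constant $\inf h$ is an admissible minorant, $Qh$ is lower semicontinuous as a supremum of continuous functions and hence Borel, and the quasiconvexity inequality passes to the supremum.

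The difference lies in the continuity assertion. The paper does not prove it; it simply invokes \cite[Lemma~2.4]{ContiMullerOrtiz20}, which gives local Lipschitz continuity of locally bounded symmetric $\dive$-quasiconvex functions. You instead give a self-contained proof in three moves:
\begin{enumerate}[(i)]
\item laminate-type test fields $\tau+g_m(k\cdot y)M$, with $Mk=0$ and $k\in\mathbb Z^n\setminus\{0\}$, give convexity of $f$ along the direction $M$;
\item the basis $e_i\otimes e_i$, $(e_i+e_j)\otimes(e_i+e_j)$ of $\Msn$ consists of such directions, with lattice annihilators $e_l$ ($l\neq i$) and $e_i-e_j$;
\item the classical argument that separately convex, locally bounded functions are locally Lipschitz finishes.
\end{enumerate}
The citation keeps the paper short. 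Your route shows exactly where each hypothesis enters: local boundedness controls the transition layers of measure $O(1/m)$, and Borel measurability makes the integrals meaningful. Because you choose basis directions annihilated by integer vectors, you also avoid the cell-independence result of \cite{Raita2019} and any use of the full wave cone $\Lambda_{\dive}$.

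Two routine facts are worth stating explicitly in a write-up:
\begin{itemize}
\item Smooth $1$-periodic $g_m$ with exactly zero mean exist. For instance, take transition profiles $a+(b-a)\psi$ with $\psi(1-t)=1-\psi(t)$, so each transition layer averages to $(a+b)/2$.
\item $\int_Q G(k\cdot y)\dd y=\int_0^1 G(s)\dd s$ for $1$-periodic $G$ and $k\in\mathbb Z^n\setminus\{0\}$.
\end{itemize}
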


\begin{proof}
By~\cite[Lemma 2.4]{ContiMullerOrtiz20}, every locally bounded symmetric $\dive$-quasiconvex function is locally Lipschitz continuous. This proves the first assertion.

We turn to the second assertion. Since $h$ is bounded below, there exists $c\in\R$ such that $h\geq c$. Hence
\[
c\leq Qh\leq h.
\]
In particular, $Qh$ is bounded below and locally bounded. By the first part, every symmetric $\dive$-quasiconvex minorant of $h$ is continuous. Therefore $Qh$, being the supremum of such minorants, is lower semicontinuous, hence Borel measurable.

Finally, the defining inequality for symmetric $\dive$-quasiconvexity is stable under taking suprema. Hence $Qh$ is symmetric $\dive$-quasiconvex.
\end{proof}

\begin{prop} \label{prop:anaparastasi tis thikis} Let $h\colon\Msn\to\R$ be continuous {and bounded from below}. Then, for every $\tau\in\Msn$, the following representation formula holds: 
\begin{align}Qh(\tau)=\inf \, \setBB{\int_{Q} h(\varphi(y))\dd y }{ & \varphi\in\Crm^\infty_{\rm per}(Q;\Msn), \notag\\
	&\int_{Q}\varphi \dd y=\tau,\; \dive\varphi=0 \text{ in $\R^n$ } }. \end{align}
 \end{prop}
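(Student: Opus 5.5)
Denote by $\tilde Q h(\tau)$ the infimum on the right-hand side. The plan is to show that $\tilde Q h$ is itself a symmetric $\dive$-quasiconvex minorant of $h$ and that every such minorant lies below it. Then $Qh=\tilde Q h$ follows from the definition of $Qh$ as a supremum.

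\emph{Step 1: every minorant lies below $\tilde Q h$.} Let $g\le h$ be symmetric $\dive$-quasiconvex, and let $\varphi$ be admissible in the infimum, so that $\int_Q\varphi=\tau$. The definition of quasiconvexity gives
\[
g(\tau)\le\int_Q g(\varphi)\,dy\le\int_Q h(\varphi)\,dy.
\]
Taking the infimum over $\varphi$ yields $g\le\tilde Q h$, hence $Qh\le\tilde Q h$.

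\emph{Step 2: $\tilde Q h\le h$ and $\tilde Q h$ is a valid candidate.} Testing with the constant field $\varphi\equiv\tau$ gives $\tilde Q h\le h$. Since $h\ge c$, also $\tilde Q h\ge c$, so $\tilde Q h$ is bounded below. It is locally bounded because it is squeezed between $c$ and the continuous function $h$. Borel measurability follows from upper semicontinuity. To see this, $\tau\mapsto\int_Q h(\varphi+\tau-\tau_0)$ is continuous for each fixed $\varphi$, since $h$ is uniformly continuous on compacts and $\varphi$ is bounded. Moreover, shifting an admissible $\varphi$ by the constant $\tau-\tau_0$ keeps it divergence-free and periodic. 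So $\tilde Q h$ is an infimum of continuous functions.

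\emph{Step 3 (main obstacle): quasiconvexity of $\tilde Q h$.} Fix an admissible $\psi$ with average $\tau$ and fix $\delta>0$; the goal is $\tilde Q h(\tau)\le\int_Q\tilde Q h(\psi)+\delta$. The plan is the standard gluing argument, as in Fonseca--M\"uller.
\begin{enumerate}
\item Partition $Q$ into small cubes $Q_i$ of side $1/N$.
\item On each $Q_i$, approximate $\psi$ by its average $\tau_i$. Using the upper semicontinuity from Step 2 and uniform continuity of $\psi$, pick a single admissible $\varphi_i$ with average $\tau_i$ and $\int_Q h(\varphi_i)\le\tilde Q h(\tau_i)+\delta$.
\item Rescale to $\varphi_i(Mx)$ with $M$ a multiple of $N$, so that each piece is periodic on $Q_i$.
\item Define the competitor $\Phi(x)=\psi(x)+\sum_i\mathbf 1_{Q_i}(x)\bigl(\varphi_i(Mx)-\tau_i\bigr)$.
\end{enumerate}

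The difficulty is that $\Phi$ is not smooth and not divergence-free across the interfaces $\partial Q_i$. The oscillating parts $\varphi_i(Mx)-\tau_i$ have zero mean on every small cell. Their divergence defect is therefore concentrated on the interfaces and tends to zero in $\Hrm^{-1}$ as $M\to\infty$. Two tools fix this. First, use cutoffs $\eta_i$ equal to $1$ on most of $Q_i$. Second, correct the remaining error with the projection $\mathcal P$ onto periodic divergence-free symmetric fields. This is a Fourier multiplier of order zero: at each frequency $k$ it acts as the projection onto $Z(k)$, which is well defined because the operator has constant rank. It is bounded on $\Lrm^p$ for $1<p<\infty$, and the correction $\mathcal P\Phi-\Phi$ tends to zero in $\Lrm^p$. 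Finally, mollify to get smoothness while preserving the constraint and the mean.

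Passing from $\Phi$ to $\mathcal P\Phi$ changes the energy, and this needs control. If the fields are uniformly bounded in $\Lrm^\infty$, continuity of $h$ on compacts together with convergence in measure suffices. The $\Lrm^\infty$ bound is not preserved by $\mathcal P$, however. I would therefore first reduce to a truncated version $h\wedge m$, or work with equi-integrability and the assumption that $h$ is continuous and bounded below. I expect this control of $h$ along the projected sequence to be the delicate point.

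Granting it, the energy bound reads
\[
\tilde Q h(\tau)\le\liminf_{M\to\infty}\int_Q h(\mathcal P\Phi)\le\sum_i|Q_i|\,\tilde Q h(\tau_i)+C\delta.
\]
The right-hand side is $\le\int_Q\tilde Q h(\psi)+C'\delta$ as $N\to\infty$, by the upper semicontinuity of $\tilde Q h$ and a Riemann-sum argument. This gives quasiconvexity, so $\tilde Q h\le Qh$, which together with Step 1 proves the formula.
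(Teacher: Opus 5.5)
Your Steps 1 and 2 and the closing sandwich argument are exactly the paper's proof. Your check that the right-hand side is locally bounded and upper semicontinuous, hence Borel, is correct and is left implicit in the paper. The difference is Step 3. The paper does not prove the symmetric $\dive$-quasiconvexity of the right-hand side; it takes it from \cite[Proposition~3.4]{FonsecaMuller99}. You try to re-derive it, and there your argument has a genuine gap with two parts.

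\emph{First problem: the energy after projection.} The point you flag yourself is not resolved, and neither suggested remedy works. The projection $\mathcal P$ is bounded on $\Lrm^p$, $1<p<\infty$, but not on $\Lrm^\infty$. Moreover, $h$ is only continuous and bounded below, with no upper growth bound. So neither $\mathcal P\Phi-\Phi\to0$ in $\Lrm^p$ nor equi-integrability of $|\mathcal P\Phi|^p$ gives an upper bound on $\int_Q h(\mathcal P\Phi)$, which is exactly what you need. Truncation goes the wrong way: since $h\wedge m\le h$, a good competitor for $h\wedge m$ only bounds $\tilde Q(h\wedge m)(\tau)\le\tilde Qh(\tau)$ from above, not $\tilde Qh(\tau)$ itself. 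A bound $|h(\tau)|\le C(1+|\tau|^p)$ would let Vitali's theorem close this step, and it holds for the $f^{\mathbb X}_l$ used later, but it is not a hypothesis here. To keep $\Lrm^\infty$ control, glue through an exact potential instead of projecting. Every zero-mean periodic divergence-free symmetric field can be written as $\sigma_{ab}=\sum_{c,d}\partial_c\partial_d\Psi_{acbd}$, with $\Psi$ smooth, periodic, antisymmetric in $(a,c)$ and in $(b,d)$, and symmetric under $(a,c)\leftrightarrow(b,d)$; one solves for $\Psi$ mode by mode in Fourier space. Cutting off $M^{-2}\Psi(M\cdot)$ rather than the field gives a smooth, exactly divergence-free competitor. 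It coincides with the oscillation where the cut-off equals $1$, and it stays bounded up to $O(1/M)$ on thin boundary layers of small measure.

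\emph{Second problem: order of quantifiers.} This arises even before any projection. On $Q_i$ your competitor is $\psi(x)-\tau_i+\varphi_i(Mx)$. Replacing it by $\varphi_i(Mx)$ inside $h$ requires $\sup_{Q_i}|\psi-\tau_i|\lesssim 1/N$ to be small compared with the modulus of continuity of $h$ on the range of $\varphi_i$. But $\varphi_i$ depends on $\tau_i$, so it is chosen after $N$. Refining the partition changes the $\varphi_i$ and their ranges, so the estimate is circular. Upper semicontinuity of $\tilde Qh$ does not help here. It is the right tool for your final Riemann-sum step, via reverse Fatou, since $\tilde Qh\le h$ is bounded above on compacts. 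It says nothing, however, about how a near-optimal field for $\tau_i$ performs at nearby values. You must fix finitely many zero-mean oscillations before choosing the partition. One way is as follows:
\begin{enumerate}
\item Apply Lusin's theorem to $\tilde Qh\circ\psi$ to get a compact set $K$ on which it is continuous.
\item Cover $K$ by finitely many relatively open sets of the form $\{x\in K:\int_Q h(\psi(x)+\varphi_0(y))\,\mathrm{d}y<\tilde Qh(\psi(x))+2\delta\}$, one for each of finitely many zero-mean fields $\varphi_0$.
\item Only then take $N$ beyond a Lebesgue number of this cover, and use density points of $K$ to control the cubes that meet $K$ poorly.
\end{enumerate}
Alternatively, first prove that $\tilde Qh$ is continuous.

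With both repairs your route becomes a self-contained proof of the result the paper cites. As written, the quasiconvexity of $\tilde Qh$ is not established.
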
 
{As before,} the set $Q$ in the representation formula above can be replaced by any rotated cube $RQ$, and in this case the periodicity of $\varphi$ is understood with respect to the basis $Re_1,\dots,Re_n$.
\begin{proof}
Denote by {$R(\tau)$} the right-hand side of the formula. By~\cite[Proposition~3.4]{FonsecaMuller99}, applied to the symmetric divergence constraint, the function $R$ is symmetric $\dive$-quasiconvex. We first prove that $Qh\leq R$. Let $g$ be any symmetric $\dive$-quasiconvex function such that $g\leq h$. If  $\varphi\in\Crm^\infty_{\rm per}(Q;\Msn)$ satisfies \[ \dive\varphi=0\quad\text{in }\R^n, \qquad \int_{Q}\varphi\,\dd x=\tau, \] then, by symmetric $\dive$-quasiconvexity of $g$, \[ g(\tau) = g\left(\int_{Q}\varphi\,\dd x\right) \leq \int_{Q} g(\varphi)\,\dd x \leq \int_{Q} h(\varphi)\,\dd x. \] Taking the infimum over all admissible field $\varphi$ gives \( g(\tau)\leq R(\tau). \) Taking the supremum over all such minorants $g$ yields \(Qh(\tau)\leq R(\tau). \) Conversely, by choosing the constant admissible field $\varphi\equiv\tau$, we get \( R(\tau)\leq h(\tau). \) Since $R$ is symmetric $\dive$-quasiconvex and lies below $h$, the definition of $Qh$ gives \( R(\tau)\leq Qh(\tau). \) Combining the two inequalities, we conclude that $Qh(\tau)=R(\tau)$. 
\end{proof}

For $\tau\in\Msn$, let us define
 \[ f_l^\mathbb X(\tau) := \min \left\{ \dprb{\mathbb A_0^{-1}\tau,\tau}+l, \dprb{\mathbb X^{-1}\tau,\tau} \right\}\]
 and  
\begin{equation} \label{eq:f_l} f_l(\tau) := \begin{cases} \dprb{\mathbb A_0^{-1}\tau,\tau}+l & \text{if } \tau\neq0,\\ 0 & \text{if } \tau=0. \end{cases} \end{equation}
The following lemma identifies the functions $F_l^\mathbb X$ defined in~\eqref{FalFl} with the symmetric $\dive$-quasiconvex envelopes of the $f_l^\mathbb X$.

\begin{lem} \label{lem:isotita sto gqiv me to a stathero} For every $\tau\in\Msn$ we have \[ F_l^\mathbb X(\tau)=Qf_l^\mathbb X(\tau). \] \end{lem}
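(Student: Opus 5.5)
We prove the two inequalities $Qf_l^\mathbb X\le F_l^\mathbb X$ and $F_l^\mathbb X\le Qf_l^\mathbb X$ separately, using the representation formula of Proposition~\ref{prop:anaparastasi tis thikis}. This applies because $f_l^\mathbb X$ is continuous and nonnegative.

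\emph{Upper bound $Qf_l^\mathbb X\le F_l^\mathbb X$.} Fix $\theta\in[0,1]$ and a complementary law $\mathbb L=\mathbb L_{\mathbb X,\mathbb A_0,1-\theta,v,R,\chi}\in K^\mathbb X_\theta$. In the phase $\{\chi=1\}$ the local law is $\mathbb X^{-1}$, and its volume is $1-\theta$. In the complementary phase the local law is $\mathbb A_0^{-1}$, with volume $\theta$. For any smooth periodic divergence-free $\zeta$ on $Q_{v,R}$ with average $\tau$ we have pointwise
\[
f_l^\mathbb X(\zeta)\le \chi\dprb{\mathbb X^{-1}\zeta,\zeta}+(1-\chi)\bigl(\dprb{\mathbb A_0^{-1}\zeta,\zeta}+l\bigr).
\]
Integrating gives
\[
\int_{Q_{v,R}} f_l^\mathbb X(\zeta)\dd y \le \int_{Q_{v,R}}\dprb{\mathbb H^{-1}\zeta,\zeta}\dd y + l\theta.
\]
By the rotated-cube version of Proposition~\ref{prop:anaparastasi tis thikis}, the left-hand side dominates $Qf_l^\mathbb X(\tau)$. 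Taking the infimum over smooth admissible $\zeta$ (which suffices by the density remark in Definition~\ref{defn:vasikotatos orismos}), then over $\mathbb L\in K^\mathbb X_\theta$, and finally over $\theta$, yields $Qf_l^\mathbb X\le F_l^\mathbb X$.

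\emph{Lower bound $F_l^\mathbb X\le Qf_l^\mathbb X$.} Take any smooth periodic divergence-free $\varphi$ on $Q$ with average $\tau$. Define the Borel set
\[
\chi:=\mathbf 1_{\{\dprb{\mathbb X^{-1}\varphi,\varphi}\le \dprb{\mathbb A_0^{-1}\varphi,\varphi}+l\}},
\]
which selects the phase realising the minimum in $f_l^\mathbb X(\varphi)$, and set $1-\theta:=\int_Q\chi$. Then $\int_Q f_l^\mathbb X(\varphi)=\int_Q\dprb{\mathbb H^{-1}\varphi,\varphi}+l\theta$. Since $\varphi$ is admissible for the cell problem~\eqref{L} with this $\chi$ (taking $R=\mathrm{Id}$, so the cell is $Q$), we get
\[
\int_Q f_l^\mathbb X(\varphi)\ge \dprb{\mathbb L\tau,\tau}+l\theta\ge F^\mathbb X(\tau,\theta)+l\theta\ge F_l^\mathbb X(\tau).
\]
Taking the infimum over $\varphi$ gives the claim.

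\emph{Checks for the lower bound.} Two small points need verifying. First, the definition of $K^\mathbb X_\theta$ allows arbitrary Borel periodic $\chi$ with the prescribed volume fraction, and the direction $v$ and rotation $R$ play no role in the cell problem itself. Second, the degenerate cases $\theta\in\{0,1\}$ are consistent with $K^\mathbb X_0=\{\mathbb X^{-1}\}$ and $K^\mathbb X_1=\{\mathbb A_0^{-1}\}$, as used in Lemma~\ref{lem:black box}. Strictly speaking, for these degenerate $\chi$ the value is just the one-phase energy, and that energy dominates $\dprb{\mathbb X^{-1}\tau,\tau}$ (respectively $\dprb{\mathbb A_0^{-1}\tau,\tau}$) by Jensen's inequality.

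The main obstacle is purely definitional: $K^\mathbb X_\theta$ must genuinely contain all two-phase periodic microgeometries, not only laminates, so that the lower-bound step is legitimate. If only laminates were allowed, we would instead compare $\int_Q\dprb{\mathbb H^{-1}\varphi,\varphi}$ with the Hashin--Shtrikman bound of Proposition~\ref{prop:sxesi 6.9}, which holds for arbitrary $\chi$. Lemma~\ref{lem:black box} together with Lemma~\ref{realization} shows that laminates attain this bound, so $F^\mathbb X(\tau,\theta)$ equals the bound in either reading and the same chain of inequalities goes through.
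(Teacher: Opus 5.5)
Your proposal is correct and follows the paper's proof essentially step for step. Both directions go through the representation formula of Proposition~\ref{prop:anaparastasi tis thikis}: one uses the pointwise bound $f_l^\mathbb X(\zeta)\le \chi\dprb{\mathbb X^{-1}\zeta,\zeta}+(1-\chi)(\dprb{\mathbb A_0^{-1}\zeta,\zeta}+l)$ on an arbitrary cell from $K^\mathbb X_\theta$, and the other uses the phase-selecting indicator $\chi$ built from $\varphi$ with $R=\mathrm{Id}$. Your closing fallback for a laminates-only reading is unnecessary, since $K^\mathbb X_\theta$ is defined with arbitrary Borel periodic $\chi$, exactly as you assumed.
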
 \begin{proof} Since $f_l^\mathbb X$ is continuous, in view of Proposition~\ref{prop:anaparastasi tis thikis} it is enough to prove that, for every $\tau\in\Msn$, \begin{align}F^{ \mathbb X}_{l}(\tau)=\inf \, \setBB{\int_{Q} f^{ \mathbb X}_{l}(\varphi(y))\dd y }{ &\varphi\in C^\infty_{\per}(Q;\Msn), \notag\\
		&\int_{Q}\varphi \dd y=\tau,\; \dive\varphi=0 \text{ in $\R^n$ } }, \end{align}
	We denote the right-hand side by {$R(\tau)$.} We first prove that \( F_l^\mathbb X(\tau)\leq R(\tau). \) Let $\varphi\in\Crm^\infty_{\per}(Q;\Msn)$ be admissible in the definition of $R(\tau)$. Define $\chi\colon\R^n\to\{0,1\}$ by \[ \chi(y):= \begin{cases} 1 & \text{if } \dprb{\mathbb X^{-1}\varphi(y),\varphi(y)} \leq \dprb{\mathbb A_0^{-1}\varphi(y),\varphi(y)}+l,\\ 0 & \text{otherwise.} \end{cases} \] Then $\chi$ is Borel measurable and $Q$-periodic. Set \[ 1-\theta:=\int_Q\chi(y)\,\dd y, \] so that $\theta\in[0,1]$. Let \(\mathbb H(y):=\chi(y)\mathbb X+(1-\chi(y))\mathbb A_0. \) By the definition of $\chi$, \[ f_l^\mathbb X(\varphi(y)) = \dprb{\mathbb H(y)^{-1}\varphi(y),\varphi(y)} + l(1-\chi(y)). \] Therefore \begin{equation}\label{fal} \int_Q f_l^\mathbb X(\varphi(y))\,\dd y = \int_Q \dprb{\mathbb H(y)^{-1}\varphi(y),\varphi(y)} \,\dd y + l\theta. \end{equation} Let \(\mathbb L:=\mathbb L_{\mathbb X,\mathbb A_0,1-\theta,e_1,I,\chi}. \) Then $\mathbb L\in K_\theta^\mathbb X$. Moreover, $\varphi$ is admissible in the cell formula defining $\mathbb L$ {(see~\eqref{L})}, and hence, by the definition of $\mathbb L$, \[ \dprb{\mathbb L \tau,\tau} \leq \int_Q \dprb{\mathbb H(y)^{-1}\varphi(y),\varphi(y)} \,\dd y. \] Using~\eqref{FalFl} and~\eqref{fal}, we obtain \[ F_l^\mathbb X(\tau)\leq F^\mathbb X(\tau,\theta)+l\theta \leq \dprb{\mathbb L \tau,\tau}+l\theta \leq \int_Q f_l^\mathbb X(\varphi(y))\,\dd y. \] Taking the infimum over all admissible $\varphi$ gives \( F_l^\mathbb X(\tau)\leq R(\tau). \) We now prove the opposite inequality. Fix $\theta\in[0,1]$. We prove that \( R(\tau)\leq F^\mathbb X(\tau,\theta)+l\theta. \) Let $\mathbb L\in K_\theta^\mathbb X$. Then \( \mathbb L=\mathbb L_{\mathbb X,\mathbb A_0,1-\theta,v,R,\chi} \) for some admissible $v,R,\chi$. In particular, \[ \int_{Q_{v,R}}\chi(y)\,\dd y=1-\theta. \] Set \(\mathbb H(y):=\chi(y)\mathbb X+(1-\chi(y))\mathbb A_0. \) Let $\tilde\varphi$ be admissible in the cell formula defining $\langle\mathbb L \tau,\tau\rangle$ (see~\eqref{L}). Then, { 
	\begin{align*} &\int_{Q_{v,R}} \dprb{\mathbb H(y)^{-1}\tilde\varphi(y),\tilde\varphi(y)} \,\dd y + l\theta \\ &\quad = \int_{Q_{v,R}} \chi(y)\dprb{\mathbb X^{-1}\tilde\varphi(y),\tilde\varphi(y)} \,\dd y  + \int_{Q_{v,R}} (1-\chi(y)) \left[ \dprb{\mathbb A_0^{-1}\tilde\varphi(y),\tilde\varphi(y)} + l \right] \,\dd y \\
	 &\quad \geq \int_{Q_{v,R}} f_l^\mathbb X(\tilde\varphi(y)) \,\dd y  \\
     &\quad \geq R(\tau). \end{align*}} In the last inequality we used the representation formula for $R$ on the rotated cell $Q_{v,R}$. Taking the infimum over all admissible $\tilde\varphi$ in the cell formula for $\mathbb L$, we get \( \dprb{\mathbb L \tau,\tau}+l\theta\geq R(\tau). \) Then taking the infimum over $\mathbb L\in K_\theta^\mathbb X$ yields \( F^\mathbb X(\tau,\theta)+l\theta\geq R(\tau). \) Finally, taking the infimum over $\theta\in[0,1]$, we obtain \(F_l^\mathbb X(\tau)\geq R(\tau). \)Combining the two inequalities gives {\( F_l^\mathbb X=R=Qf_l^\mathbb X, \)} as claimed. \end{proof}

\begin{prop} \label{peri Fl kai Qdiv}

For any $\tau \in \Msn$ it holds that $F_{l}(\tau)=Qf_{l}(\tau).$

\end{prop}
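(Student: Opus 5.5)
The statement asks for two inequalities, $F_l\le Qf_l$ and $Qf_l\le F_l$. Both will be reduced to Lemma~\ref{lem:isotita sto gqiv me to a stathero}, which gives $F_l^\mathbb X=Qf_l^\mathbb X$ for each Hooke's law $\mathbb X<\mathbb A_0$, and then we pass to the limit $\mathbb X\to0$ using Proposition~\ref{prop:sxetika me tin syglisi sto Fl+}. Throughout, take $\mathbb X=\varepsilon\,\mathbb I$ with $\varepsilon\downarrow0$. This choice is admissible for $\varepsilon$ small, since $\mathbb A_0$ is elliptic.

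\emph{Step 1: $F_l\le Qf_l$.} We compare the integrands pointwise. For $\tau\neq0$ we have $f_l^\mathbb X(\tau)\le \dprb{\mathbb A_0^{-1}\tau,\tau}+l=f_l(\tau)$, and at $\tau=0$ both functions vanish, so $f_l^\mathbb X\le f_l$. The envelope $Q$ is monotone, and $f_l^\mathbb X$ is continuous and bounded below, so $Qf_l^\mathbb X$ is well defined. Hence $F_l^\mathbb X=Qf_l^\mathbb X\le Qf_l$. Letting $\mathbb X\to0$ and using Proposition~\ref{prop:sxetika me tin syglisi sto Fl+} gives $F_l\le Qf_l$.

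\emph{Step 2: $Qf_l\le F_l$.} This is the main obstacle. The natural tool, the representation formula of Proposition~\ref{prop:anaparastasi tis thikis}, is not available for $f_l$ because $f_l$ is discontinuous at $0$. Instead, I would exploit the regularity of $Qf_l$ itself.

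By Lemma~\ref{lem:sxetika me rank n-1 klp}, $Qf_l$ is symmetric $\dive$-quasiconvex and continuous. It also satisfies $Qf_l\le f_l$, so in particular $Qf_l(0)\le0$ and $Qf_l(\tau)\le\dprb{\mathbb A_0^{-1}\tau,\tau}+l$ for every $\tau$.

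Fix $\eta>0$. By continuity at $0$, choose $r>0$ with $Qf_l(\tau)\le\eta$ for $|\tau|\le r$. Let $c_0$ be the operator norm of $\mathbb A_0^{-1}$, so that $\dprb{\mathbb A_0^{-1}\tau,\tau}\le c_0|\tau|^2$. Now choose $\varepsilon$ so small that
\[
\frac{|\tau|^2}{\varepsilon}\ge c_0|\tau|^2+l \qquad\text{for all }|\tau|\ge r.
\]
This holds, for instance, whenever $1/\varepsilon\ge c_0+l/r^2$, and then it holds for every smaller $\varepsilon$ as well.

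\emph{Step 3: the key inequality $Qf_l\le f_l^\mathbb X+\eta$ for $\mathbb X=\varepsilon\mathbb I$.} We split according to the size of $\tau$.
\begin{itemize}
\item For $|\tau|\le r$: $Qf_l(\tau)\le\eta\le f_l^\mathbb X(\tau)+\eta$, since $f_l^\mathbb X\ge0$.
\item For $|\tau|>r$: we have $\dprb{\mathbb X^{-1}\tau,\tau}=|\tau|^2/\varepsilon\ge\dprb{\mathbb A_0^{-1}\tau,\tau}+l$ by the choice of $\varepsilon$. Hence $f_l^\mathbb X(\tau)=\dprb{\mathbb A_0^{-1}\tau,\tau}+l\ge Qf_l(\tau)$.
\end{itemize}

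The function $Qf_l-\eta$ is symmetric $\dive$-quasiconvex, since quasiconvexity is preserved by subtracting a constant, and it lies below $f_l^\mathbb X$. By the definition of the envelope, $Qf_l-\eta\le Qf_l^\mathbb X=F_l^{\varepsilon\mathbb I}$, using Lemma~\ref{lem:isotita sto gqiv me to a stathero}.

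\emph{Step 4: conclusion.} The inequality of Step 3 holds for all sufficiently small $\varepsilon$. Letting $\varepsilon\to0$ and using Proposition~\ref{prop:sxetika me tin syglisi sto Fl+} yields $Qf_l\le F_l+\eta$. Since $\eta>0$ was arbitrary, $Qf_l\le F_l$, and together with Step 1 this gives $F_l=Qf_l$.
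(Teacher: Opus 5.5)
Your proof is correct and follows essentially the argument the paper defers to (the first part of Theorem~3.1 in Allaire--Bonnetier--Francfort--Jouve). The first inequality comes from $f_l^{\mathbb X}\le f_l$ and monotonicity of the envelope. The reverse inequality uses continuity of $Qf_l$ at $0$ (supplied by Lemma~\ref{lem:sxetika me rank n-1 klp}, exactly the point the paper's remark emphasizes) to get $Qf_l-\eta\le f_l^{\mathbb X}$ for small $\mathbb X$, and then passes to the limit via Lemma~\ref{lem:isotita sto gqiv me to a stathero} and Proposition~\ref{prop:sxetika me tin syglisi sto Fl+}.
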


The proof is essentially the same as the first part of~\cite[Theorem 3.1]{AllaireBonnetierFrancfortJouve97} and will not be repeated here. Indeed, neither the dimension $n$ nor the fact that $j$ is a general quadratic form plays a role in this argument. We only point out the following technical issue. In~\cite[Theorem 3.1]{AllaireBonnetierFrancfortJouve97}, the continuity of $Qf_l$ is used in an essential way. There, this is justified by observing that $Qf_l$ is symmetric $\dive$-quasiconvex and hence rank-$(n-1)$-convex, which implies continuity. However, in order to prove that $Qf_l$ is symmetric $\dive$-quasiconvex, one first needs to know that $Qf_l$ is Borel measurable. This does not seem to follow immediately from the definition, and it is not discussed in~\cite{AllaireBonnetierFrancfortJouve97}. Lemma~\ref{lem:sxetika me rank n-1 klp} fills this gap and also provides a simpler proof of the continuity of $Qf_l$.

\subsection{{The relaxation of the generalized Kohn--Strang integrand}}

In~\cite{BabadjianIurlanoRindler23}, where the quadratic form is $j_0=\frac12|\cdot|^2$ and the dimension is restricted to $2$ or $3$, the following \emph{Kohn--Strang {integrand}} was introduced. Let $\alpha>0$, $\beta>0$, and define $h_0\colon\Msn\to\R$ by \[ h_0(\tau):= \begin{cases} \alpha|\tau|^2+\beta & \text{if } \tau\neq0,\\ 0 & \text{if } \tau=0. \end{cases} \] As recalled in that paper, explicit formulas for $Qh_0(\tau)$ are available in terms of the eigenvalues of $\tau\in\Msn$. These formulas were used to study the limiting behaviour of $Qh_{0,\e}$, where \[ h_{0,\e}(\tau):= \begin{cases} \displaystyle \frac{\e}{2}|\tau|^2+\frac{1}{2\e} & \text{if } \tau\neq0,\\ 0 & \text{if } \tau=0. \end{cases} \] More precisely, it is proved that \[ Qh_{0,\e}\to\rho_0^\circ \qquad\text{pointwise as } \e\todown0, \] where $\rho_0^\circ$ is the polar function associated with the quadratic form $j_0$. We follow the same strategy for a general quadratic form $j$ and in arbitrary dimension $n$. We therefore begin by introducing the corresponding Kohn--Strang {integrand} in this more general setting.

\begin{defn}
Let $\alpha>0$, $\beta>0$ and let $h \colon \Msn \to \R$ be the {\emph{generalized Kohn--Strang integrand},} defined by
$$h(\tau):=
\begin{cases}
\alpha j^*(\tau)+\beta & \text{if }\tau \neq 0,\\
0 & \text{if } \tau=0.
\end{cases}$$
\end{defn}
For any $\varepsilon>0$ we take $\alpha := \varepsilon$ and $\beta := \frac{1}{2\varepsilon}$  and define $$h_{\varepsilon}(\tau):=
\begin{cases}
\varepsilon j^*(\tau)+\frac{1}{2\varepsilon} & \text{if }\tau \neq 0,\\
0 & \text{if } \tau=0.
\end{cases}$$
for any $\tau \in \Msn.$ 

Using the continuity and symmetric $\dive$-quasiconvexity of $Qh$ (see Proposition ~\ref{prop:anaparastasi tis thikis}) and employing the same arguments as in the proof of ~\cite[Proposition 4.2]{BabadjianIurlanoRindler23} we have the following.
\begin{prop} \label{Proposition 4.2 from Filip's paper} For all $\sigma \in \Lrm^2(\Omega;\Msn)$, the following assertions hold:
\begin{enumerate}[(i)]
\item For all sequences $\{\sigma_k\}_{k\in \N} \subset \Lrm^2(\Omega;\Msn)$ such that $\sigma_k\wto\sigma$ in $\Lrm^2(\Omega;\Msn)$ and $\dive\sigma_k\to\dive\sigma$ in $[\Hrm^1(\Omega;\R^n)]^*$,
$$\liminf_{k\to\infty}\int_\Omega h(\sigma_k)\dd x\geq\int_\Omega Qh(\sigma)\dd x.$$
\item There exists a sequence $\{\bar \sigma_k\}_{k\in\N} \subset \Lrm^2(\Omega;\Msn)$ such that $\bar \sigma_k\wto\sigma$ in $\Lrm^2(\Omega;\Msn)$, $\dive\bar\sigma_k\to\dive\sigma$ in $[\Hrm^1(\Omega;\R^n)]^*$, and
$$\lim_{k\to\infty}\int_\Omega h(\bar\sigma_k)\dd x=\int_\Omega Qh(\sigma)\dd x.$$
\end{enumerate}
\end{prop}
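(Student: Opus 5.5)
The plan is to treat this as a standard relaxation result for an integral functional under a constant-rank differential constraint, here the symmetric divergence. The ingredients are the representation formula of Proposition~\ref{prop:anaparastasi tis thikis}, the continuity of $Qh$ from Lemma~\ref{lem:sxetika me rank n-1 klp}, and the quadratic growth bounds $0\le Qh\le h\le \alpha C|\tau|^2+\beta$ together with $h(\tau)\ge \alpha c|\tau|^2$, which follow from~\eqref{eq:alpha}. Note that $h$ is not continuous at $0$. I would therefore first replace $h$ by $\tilde h:=\min\{h,\alpha j^*+\beta\}$, which equals $h$ everywhere, and use instead the continuous functions $h^\delta(\tau):=\min\{\alpha j^*(\tau)+\beta,\ \alpha j^*(\tau)+\beta|\tau|^2/\delta^2\}$. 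These satisfy $h^\delta\le h$ and increase to $h$ pointwise as $\delta\to0$.

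For part (i), I would follow the blow-up / Fonseca--Müller route. First, by a projection onto divergence-free fields, I would modify $\sigma_k$ so that $\dive\sigma_k=\dive\sigma$ exactly. This uses the strong $[\Hrm^1]^*$ convergence of the divergences and the Riesz-type Proposition~\ref{prop:riesz type theorem} to write $\dive(\sigma_k-\sigma)=-\dive G_k$ with $G_k\to0$ in $\Lrm^2$. The error this introduces in the energy is controlled by $\int_\Omega |G_k|(|\sigma_k|+|G_k|)$, after first truncating $h$ by the continuous $h^\delta$, which is Lipschitz on bounded sets with linear-quadratic modulus. Next I would take the weak* limit $\nu$ of $h^\delta(\sigma_k)\LL^n$ and show, at Lebesgue points $x_0$ of $\sigma$, that $\frac{\di\nu}{\di\LL^n}(x_0)\ge Qh^\delta(\sigma(x_0))$. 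The tool is blow-up on cubes: rescale $\sigma_k-\sigma(x_0)$, cut off with a periodization argument (equi-integrability comes from a decomposition lemma for $\mathcal A$-free fields, \cite{FonsecaMuller99}), and invoke the cell formula of Proposition~\ref{prop:anaparastasi tis thikis} for $h^\delta$. Finally I let $\delta\to0$. Here I need $Qh^\delta\uparrow Qh$, which I would prove using the cell formula and the uniform quadratic coercivity: near-optimal fields for $Qh^\delta$ are bounded in $\Lrm^2$, and the difference $h-h^\delta$ is supported where $|\varphi|<\delta$ and bounded by $\beta$. The cell fields' energies therefore change by at most $\beta\,|\{0<|\varphi|<\delta\}|$, which I would control by a further trick of rescaling small values to $0$. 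This monotone-limit step is the part I expect to be most delicate.

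For part (ii), I would use the standard recovery construction. Approximate $\sigma$ in $\Lrm^2$ by piecewise constant fields $\sigma_m$ on a fine grid of cubes, corrected to keep the divergence (again via Proposition~\ref{prop:riesz type theorem}). On each cube $Q_i$ with value $\tau_i$, pick a periodic divergence-free $\varphi_i$ with mean $\tau_i$ and $\int_Q h(\varphi_i)\le Qh(\tau_i)+1/m$, using Proposition~\ref{prop:anaparastasi tis thikis} applied to $h^\delta$ and then to $h$ via the preceding monotone limit. Then set $\bar\sigma=\varphi_i(x/\eta)$ on $Q_i$. Gluing across cube boundaries creates divergence jumps, but these are $O(\eta)$ in $\Hrm^{-1}$ by Riemann--Lebesgue and can be removed by Proposition~\ref{prop:riesz type theorem} with an $\Lrm^2$-small correction $G$. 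To avoid paying $\beta$ on the whole support of $G$, I would correct only on thin boundary layers, whose measure vanishes. Continuity of $Qh$ and the quadratic upper bound give $\sum_i|Q_i|Qh(\tau_i)\to\int_\Omega Qh(\sigma)$. A diagonal argument in $(m,\eta,\delta)$ yields $\bar\sigma_k$ with energy converging to $\int_\Omega Qh(\sigma)$, and combined with (i) this gives equality.
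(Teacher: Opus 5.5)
\textbf{Gap in (ii).} The step where you pick $\varphi_i$ with $\int_Q h(\varphi_i)\le Qh(\tau_i)+1/m$ ``via Proposition~\ref{prop:anaparastasi tis thikis} for $h^\delta$ and then the monotone limit'' does not work.

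That proposition needs a continuous integrand, and $h$ is not continuous. Passing through $h^\delta$ goes the wrong way, because $h\ge h^\delta$. A field that is nearly optimal for $Qh^\delta(\tau_i)$ has small $h^\delta$-energy, but its $h$-energy can be larger by $\beta\,|\{0<|\varphi|<\delta\}|$. The convergence of the numbers $Qh^\delta(\tau_i)$ says nothing about this excess.

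Your estimate of that set is also where your argument for $Qh^\delta\uparrow Qh$ breaks. Nothing makes $|\{0<|\varphi|<\delta\}|$ small: competitors for $Qh^\delta$ gain precisely by carrying small nonzero stress on a set of order-one volume (the soft phase of the laminates of Section~\ref{sc:relax}). ``Setting small values to zero'' destroys $\dive\varphi=0$ and changes the mean. The correctors of Proposition~\ref{prop:riesz type theorem} are not localized, so they would make the field nonzero there again, at cost $\beta$ per unit volume. For the same reason, your exact divergence corrections in the gluing step are harmful rather than helpful. That proposition also does not give correctors supported in thin layers.

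\textbf{How to close it.} The statement only requires $\dive\bar\sigma_k\to\dive\sigma$ strongly in $[\Hrm^1(\Omega;\R^n)]^*$, so you do not need exact divergence at all.
\begin{itemize}
\item Take $\varphi$ nearly optimal for $Qh^\delta(\tau_i)$. This is legitimate because $h^\delta$ is continuous.
\item Truncate: $\tilde\varphi:=\varphi\,\mathbf 1_{\{|\varphi|\ge t\delta\}}$ with $t\in(0,1)$.
\item Since $h^\delta(\varphi)\ge\alpha j^*(\varphi)+\beta t^2$ on $\{|\varphi|\ge t\delta\}$, you get $\int_Q h(\tilde\varphi)\le\int_Q h^\delta(\varphi)+\beta(1-t^2)\le Qh(\tau_i)+\frac1m+\beta(1-t^2)$. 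This uses only $Qh^\delta\le Qh$, not any monotone limit.
\item You have $|\tilde\varphi-\varphi|\le\delta$ pointwise. So the mean and the divergence of the rescaled fields move by $O(\delta)$ in $[\Hrm^1(\Omega;\R^n)]^*$, uniformly in the period $\eta$.
\item Glue the truncated fields without any correction, and take a diagonal sequence in $(m,\eta,\delta,t)$.
\end{itemize}

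\textbf{On (i).} Your step $\delta\to0$ should use continuity of $Qh$ at $0$ (Lemma~\ref{lem:sxetika me rank n-1 klp}) together with $Qh(0)=0$. For every $s>0$ and all small $\delta$ this gives $Qh-s\le h^\delta$, hence $Qh-s\le Qh^\delta$.

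It is simpler still to skip $h^\delta$ in (i) altogether, and this is the paper's route (``continuity and symmetric $\dive$-quasiconvexity of $Qh$'' plus the argument of \cite[Proposition~4.2]{BabadjianIurlanoRindler23}). $Qh$ is continuous, symmetric $\dive$-quasiconvex and has quadratic growth. So the Fonseca--M\"uller lower semicontinuity, i.e.\ your blow-up, applies to $Qh$ directly, and $h\ge Qh$ finishes the argument.
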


\begin{prop} \label{praktika to neo apotelesma kapa} For every $\tau\in\Msn$, \[ Qh_{\e}(\tau)\to 2\sqrt2\,\sqrt{g_c^*(\tau)} \qquad\text{as }\e\todown0. \] 
\end{prop}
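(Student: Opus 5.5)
The plan is to reduce $Qh_\e$ to the envelope $Qf_l$ from Proposition~\ref{peri Fl kai Qdiv}, for a suitable $l$ depending on $\e$. Then the explicit formula of Proposition~\ref{peri Fl} does the rest.

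\emph{Step 1: identify $h_\e$ as a multiple of $f_l$.} Since $\mathbb A_0=2\Cbb$, we have
\[
j^*(\tau)=\tfrac12\dprb{\Cbb^{-1}\tau,\tau}=\dprb{\mathbb A_0^{-1}\tau,\tau}.
\]
Hence, for $\tau\neq0$,
\[
h_\e(\tau)=\e\Bigl(\dprb{\mathbb A_0^{-1}\tau,\tau}+\tfrac{1}{2\e^2}\Bigr),
\]
and $h_\e(0)=0$. That is, $h_\e=\e f_{l_\e}$ with $l_\e:=\frac{1}{2\e^2}$ and $f_l$ as in~\eqref{eq:f_l}.

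\emph{Step 2: homogeneity of the envelope.} A function $g$ is symmetric $\dive$-quasiconvex if and only if $\e g$ is, for $\e>0$, because the defining inequality is invariant under multiplication by a positive constant. Also $g\leq f$ if and only if $\e g\leq\e f$. Therefore $Q(\e f)=\e\,Qf$. Combining this with Proposition~\ref{peri Fl kai Qdiv} gives
\[
Qh_\e(\tau)=\e\,Qf_{l_\e}(\tau)=\e\,F_{l_\e}(\tau).
\]
This step is the only conceptual point. Everything substantial, namely the identification $F_l=Qf_l$ and the Hashin--Shtrikman optimality, has already been established.

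\emph{Step 3: explicit computation.} Consider first $\tau=0$. Then $Qh_\e(0)=\e F_{l_\e}(0)=0$, and $g_c^*(0)=0$ by Proposition~\ref{prop:kyrio neo apotelesma tou paper}, so the claim holds.

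For $\tau\neq0$, Proposition~\ref{peri Fl} gives
\[
\rho_{l_\e}(\tau)=\frac{2}{\sqrt{l_\e}}\sqrt{g_c^*(\tau)}=2\sqrt2\,\e\sqrt{g_c^*(\tau)}.
\]
This tends to $0$ as $\e\todown0$, because $g_c^*(\tau)$ is finite by Proposition~\ref{prop:kyrio neo apotelesma tou paper}. So for $\e$ small we are in the case $\rho_{l_\e}\leq1$, and
\[
Qh_\e(\tau)=\e\dprb{\mathbb A_0^{-1}\tau,\tau}+\e\, l_\e\,\rho_{l_\e}(\tau)\bigl(2-\rho_{l_\e}(\tau)\bigr)
=\e\dprb{\mathbb A_0^{-1}\tau,\tau}+\sqrt2\sqrt{g_c^*(\tau)}\bigl(2-2\sqrt2\,\e\sqrt{g_c^*(\tau)}\bigr).
\]
Letting $\e\todown0$, the right-hand side converges to $2\sqrt2\sqrt{g_c^*(\tau)}$, which is the claim. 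By Proposition~\ref{prop:kyrio neo apotelesma tou paper}, this limit equals $\rho^\circ(\tau)$.
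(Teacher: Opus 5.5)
Your proof is correct and follows the paper's route: you reduce $Qh_\e$ to $F_l$ via Proposition~\ref{peri Fl kai Qdiv}, and then read off the second branch of Proposition~\ref{peri Fl} for small $\e$. The only difference is cosmetic. The paper writes $h_\e(\tau)=f_{\frac1{2\e}}(\sqrt\e\,\tau)$ and uses that the envelope commutes with scaling of the argument, whereas you write $h_\e=\e f_{1/(2\e^2)}$ and use $Q(\e f)=\e\,Qf$; both give the same expression $\e\dprb{\mathbb A_0^{-1}\tau,\tau}+2\sqrt2\sqrt{g_c^*(\tau)}-4\e g_c^*(\tau)$.
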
 
\begin{proof} Since \( h_\e(\tau)=f_{\frac1{2\e}}(\sqrt\e\,\tau) \), {see~\eqref{eq:f_l},} we have \( Qh_\e(\tau)=Qf_{\frac1{2\e}}(\sqrt\e\,\tau). \) By Proposition~\ref{peri Fl kai Qdiv}, \( Qf_{\frac1{2\e}}(\sqrt\e\,\tau) = F_{\frac1{2\e}}(\sqrt\e\,\tau). \) We now apply Proposition~\ref{peri Fl} with $l=\frac1{2\e}$ and with $\tau$ replaced by $\sqrt\e\,\tau$. Since $g_c^*$ is positively $2$-homogeneous, we get \( g_c^*(\sqrt\e\,\tau)=\e g_c^*(\tau), \) and hence \( \rho_l(\sqrt\e\,\tau) = 2\sqrt2\,\e\,\sqrt{g_c^*(\tau)}, \) where $\rho_l$ has been defined in Proposition~\ref{peri Fl}.
	 Therefore, for every fixed $\tau$, the second branch in Proposition~\ref{peri Fl} applies for all sufficiently small $\e$, and we obtain \[ Qh_\e(\tau) = \e\dprb{\mathbb A_0^{-1}\tau,\tau} + 2\sqrt2\,\sqrt{g_c^*(\tau)} - 4\e g_c^*(\tau). \] Letting $\e\todown0$ gives \( Qh_\e(\tau)\to 2\sqrt2\,\sqrt{g_c^*(\tau)}. \)
\end{proof}

\begin{prop}
\label{prop:teliko apotelesma tou olou paper}
For any $\tau \in \Msn$, $Qh_{\varepsilon}(\tau)$ converges to  $\rho^\circ(\tau), \text{as  }\e \to 0,$ where $\rho^\circ$ has been defined in {Section~\ref{sc:convex}}.
\end{prop}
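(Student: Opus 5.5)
This follows by combining Proposition~\ref{praktika to neo apotelesma kapa} with Proposition~\ref{prop:kyrio neo apotelesma tou paper}. The plan is to check that the two limiting objects coincide. I will also make sure the constants and the convention for $g_c^*$ are consistent between the two statements.

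First, fix $\tau\in\Msn$. By Proposition~\ref{praktika to neo apotelesma kapa}, $Qh_\e(\tau)\to 2\sqrt2\,\sqrt{g_c^*(\tau)}$ as $\e\todown0$. By Proposition~\ref{prop:kyrio neo apotelesma tou paper}, $\rho^\circ(\tau)=2\sqrt2\,\sqrt{g_c^*(\tau)}$. Both statements refer to the same $g_c$, namely the one defined at the start of the Convergence subsection with $\mathbb A_0=2\Cbb$. Both also use the same conjugate $g_c^*(\tau)=\sup_h\{\dprb{\tau,h}-g_c(h)\}$, which is the convention made explicit in Proposition~\ref{peri Fl}. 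Hence the limit equals $\rho^\circ(\tau)$.

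The only point requiring care is that the identity $\bar j(2h)=g_c(h)$ and the conjugation in Proposition~\ref{prop:kyrio neo apotelesma tou paper} use the same quadratic form. This holds because $j^*(\tau')=\frac12\dprb{\Cbb^{-1}\tau',\tau'}=|\mathbb A_0^{-1/2}\tau'|^2$ with $\mathbb A_0=2\Cbb$. I will also check that the subspaces $V(\mathbb C,k)$ are the ones built with $\mathbb A_0^{-1/2}$, so that $\tau'\in Z(k)$ if and only if $\mathbb A_0^{-1/2}\tau'\in V$. If a mismatch of a factor $2$ appeared between $\mathbb C$ and $\mathbb A_0$ in the definition of $V$, it would be harmless: $\mathbb A_0^{-1/2}$ and $\Cbb^{-1/2}$ differ by a scalar, so they map $Z(k)$ onto the same subspace.

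The main, if minor, obstacle is the bookkeeping of these normalizations. The analytic content is already contained in the two cited propositions.
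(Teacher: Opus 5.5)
Your proposal is correct and is exactly the paper's argument: the paper proves this statement by combining Proposition~\ref{praktika to neo apotelesma kapa} with Proposition~\ref{prop:kyrio neo apotelesma tou paper}. Your normalization checks are accurate: $j^*(\tau')=|\mathbb A_0^{-1/2}\tau'|^2$ with $\mathbb A_0=2\Cbb$, the subspace $V(\Cbb,k)$ is unchanged when the tensor is rescaled by a positive scalar, and both results use the standard conjugate $g_c^*$, so the $g_c$ appearing in the two propositions is the same function.
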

\begin{proof}
The proof follows by the above proposition and Proposition ~\ref{prop:kyrio neo apotelesma tou paper}.
\end{proof}

We are now in a position to prove the following proposition. \begin{prop}\label{prop:Olbermann}
Given a bounded $\Crm^2$-domain  $\Omega$ in $\R^n$ ($n\geq2$), let $f \in \M(\R^n;\R^n) \cap \Hrm^{-1}(\R^n;\R^n)$ be such that $\supp(f)\subset\overline\Omega$ and $f(r)=0$ for all $r \in \mathcal R$. For every $\lambda \in \M(\overline\Omega;\Msn)$ satisfying $-\dive\lambda=f$ in $\Dcal'(\R^n;\R^n)$, there exists a sequence $\{\lambda_\e\}_{\e>0}$ in $\Lrm^2(\Omega;\Msn)$ such that $\lambda_\e \wsto \lambda$ in $\M(\R^n;\Msn)$, $-\dive\lambda_\e \to f$ in $\Hrm^{-1}(\R^n;\R^n)$, and
$$\lim_{\e \todown 0} \int_\Omega h_\e(\lambda_\e)\dd x = \int_{\R^n} \rho^\circ \left(\frac{\di\lambda}{\di|\lambda|}\right)\dd|\lambda|.$$
\end{prop}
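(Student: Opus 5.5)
This is the analogue of the corresponding recovery-sequence result in~\cite{BabadjianIurlanoRindler23} (Olbermann-type approximation). The plan is to combine a density step with the pointwise relaxation result, Proposition~\ref{prop:teliko apotelesma tou olou paper}.

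\emph{Step 1: reduction to absolutely continuous measures.} Consider first $\lambda=\sigma\LL^n\res\Omega$ with $\sigma\in\Lrm^2(\Omega;\Msn)$ (in fact bounded) and $-\dive\lambda=f$.

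\emph{Step 2: recovery sequences for fixed $\e$.} For each fixed $\e$, Proposition~\ref{Proposition 4.2 from Filip's paper}(ii) applied to $h=h_\e$ (with $\alpha=\e$, $\beta=1/(2\e)$) yields a sequence $\bar\sigma_k\wto\sigma$ in $\Lrm^2$ with $\dive\bar\sigma_k\to\dive\sigma$ in $[\Hrm^1]^*$ and $\int_\Omega h_\e(\bar\sigma_k)\to\int_\Omega Qh_\e(\sigma)$. The $[\Hrm^1(\Omega)]^*$ convergence has to be upgraded to $\Hrm^{-1}(\R^n)$ convergence of the divergence of the zero extensions. Since the test functions in $\Hrm^1(\R^n)$ restrict to $\Hrm^1(\Omega)$, this follows because $\dive$ of the extension tested against $v$ equals $-\int_\Omega\langle\bar\sigma_k,e(v)\rangle$. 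Choosing $k=k(\e)$ diagonally gives $\lambda_\e$ with $\int_\Omega h_\e(\lambda_\e)\le\int_\Omega Qh_\e(\sigma)+\e$ and with $\lambda_\e-\sigma$ small in a metric for weak* convergence and in $\Hrm^{-1}$ of the divergence.

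\emph{Step 3: passing to the limit.} By Proposition~\ref{praktika to neo apotelesma kapa}, $Qh_\e(\tau)=\e\langle\mathbb A_0^{-1}\tau,\tau\rangle+2\sqrt2\sqrt{g_c^*(\tau)}-4\e g_c^*(\tau)$ once $2\sqrt2\,\e\sqrt{g_c^*(\tau)}\le1$. In all cases $Qh_\e(\tau)\le h_\e$-type bounds and $Qh_\e(\tau)\le C(\e|\tau|^2+|\tau|)$; indeed $F_l$ in the first branch is bounded by $a+l\le a+l\rho_l(2-\rho_l)\cdot$const. Dominated convergence then gives $\int Qh_\e(\sigma)\to\int\rho^\circ(\sigma)\,\di x$, which equals the right-hand side since $\rho^\circ$ is $1$-homogeneous. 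The $\liminf$ inequality $\ge$ follows either from Reshetnyak lower semicontinuity combined with $h_\e\ge Qh_\e\ge\rho^\circ-C\e|\cdot|^2$, or simply by making the construction give $\lim$ (upper bound plus a matching lower bound). Strictly only the upper bound plus $h_\e\ge \rho^\circ$-type inequality is needed. Note that $h_\e(\tau)\ge 2\sqrt{j^*(\tau)/2}\ge$ something. Concretely, I will use $Qh_\e\ge$ the formula and the lower semicontinuity of $\mu\mapsto\int\rho^\circ(d\mu/d|\mu|)d|\mu|$ under weak* convergence, since $\rho^\circ$ is convex and $1$-homogeneous.

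\emph{Step 4: density, the main obstacle.} For a general measure $\lambda\in\M(\overline\Omega;\Msn)$ with $-\dive\lambda=f$ and $\lambda$ possibly charging $\partial\Omega$, we need $\lambda_j\in\Lrm^2(\Omega)$ with $-\dive\lambda_j\to f$ in $\Hrm^{-1}$, $\lambda_j\wsto\lambda$, and $\int\rho^\circ(\lambda_j)\to\int\rho^\circ(d\lambda/d|\lambda|)d|\lambda|$ (strict convergence). Then a diagonal argument with Step 3 concludes. The plan is as follows. Using the $\Crm^2$ regularity, apply a slight inward deformation $\Phi_t$ of $\overline\Omega$ into $\Omega$ (a push-forward of the tensor measure adapted to preserve the divergence structure up to a small error), then mollify at a scale smaller than $\operatorname{dist}(\Phi_t(\overline\Omega),\partial\Omega)$. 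Convexity and $1$-homogeneity of $\rho^\circ$ give Jensen/Reshetnyak control for the mollification; the deformation changes the total variation by $O(t)$.

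The divergence error $g_t:=-\dive\lambda_t-f$ has zero action on rigid motions and tends to $0$ in $\Hrm^{-1}$; here the $f\in\Hrm^{-1}$ hypothesis and the boundary geometry are essential. It is corrected by adding the field $G_t\in\Lrm^2(\Omega)$ from Proposition~\ref{prop:riesz type theorem}, with $\|G_t\|_{\Lrm^2}\sim\|g_t\|_{\Hrm^{-1}}\to0$. This correction costs $o(1)$ in $\int\rho^\circ$ and in $\e\int|\cdot|^2$ (for fixed $t$ as $\e\to0$). The delicate point is proving $\|g_t\|_{\Hrm^{-1}}\to0$ when $\lambda$ concentrates on $\partial\Omega$; I will follow~\cite{BabadjianIurlanoRindler23}, where this step is independent of the quadratic form $j$.
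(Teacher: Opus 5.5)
Your proposal follows the same route as the paper, which itself only sketches the proof by reference to \cite[Proposition~4.4]{BabadjianIurlanoRindler23}. The ingredients are the same:
\begin{enumerate}[(i)]
\item strict approximation of $\lambda$ by $\Lrm^2(\Omega)$ fields, with the divergence error removed via Proposition~\ref{prop:riesz type theorem};
\item Proposition~\ref{Proposition 4.2 from Filip's paper}(ii) at fixed $\e$;
\item the pointwise limit $Qh_\e\to\rho^\circ$ of Proposition~\ref{prop:teliko apotelesma tou olou paper}, combined with dominated convergence (your domination is right; in fact $Qh_\e\le \e j^*+\rho^\circ$);
\item a diagonal argument.
\end{enumerate}
As in the paper, the density step near the boundary is delegated to \cite{BabadjianIurlanoRindler23}. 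Be aware that your deformation sketch does not by itself give $\|g_t\|_{\Hrm^{-1}}\to0$. An error that is $O(t)$ in total variation need not be small in $\Hrm^{-1}$, so that step really rests on the construction in that reference.

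One point in Step~3 should be corrected: no lower bound is needed, and the ones you suggest do not work. Proposition~\ref{Proposition 4.2 from Filip's paper}(ii) gives $\lim_k\int_\Omega h_\e(\bar\sigma_k)\dd x=\int_\Omega Qh_\e(\sigma)\dd x$. So simply choose $k(\e)$ with a two-sided error at most $\e$, not just an upper bound. Then $\int_\Omega h_\e(\lambda_\e)\dd x\to\int_\Omega\rho^\circ(\sigma)\dd x$ directly.

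By contrast, your two lower-bound routes both fail:
\begin{enumerate}[(i)]
\item The pointwise inequality $h_\e\ge\rho^\circ$ is false off $\Lambda_{\dive}$. Indeed $\min_{\e}h_\e(\tau)=\sqrt{2j^*(\tau)}<\sqrt{2\bar j^*(\tau)}=\rho^\circ(\tau)$ whenever $\det\tau\neq0$.
\item Using $Qh_\e\ge\rho^\circ-C\e|\cdot|^2$ together with Reshetnyak leaves an error $C\e\int_\Omega|\lambda_\e|^2\dd x$. This term does not vanish along recovery sequences: when $\sigma\neq0$ it is bounded below, by Cauchy--Schwarz against the penalty $\frac{1}{2\e}\LL^n(\{\lambda_\e\neq0\})$.
\end{enumerate}
A genuine $\liminf$ inequality would require the compensated-compactness argument of Section~\ref{sc:lower}.
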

The strategy of the proof is parallel to that of ~\cite[Proposition 4.4]{BabadjianIurlanoRindler23} with small modifications and we only sketch it. More precisely, we use Propositions~\ref{prop:riesz type theorem}, ~\ref{Proposition 4.2 from Filip's paper}, and~\ref{prop:teliko apotelesma tou olou paper}, in place of  ~\cite[Propositions 4.2, 4.4]{BabadjianIurlanoRindler23} and the convergence of $Qh_{0,\e}$ discussed in the same paper.

\section{Proof of the main theorem} \label{sc:upper}
{We now finally come to the proof of Theorem~\ref{thm:conv-min}.}
\begin{proof}[Proof of Theorem~\ref{thm:conv-min}] The proof is an adaptation of~\cite[Theorem 1.1]{BabadjianIurlanoRindler23}. We may assume that $f\not\equiv0$. Indeed, if {$f \equiv 0$}, then $\mathscr C_\e(\e^{-1}\LL^n\res\omega)=0$ for every $\omega\in\A_\e$, while $\overline{\mathscr C}(\mu)=0$ for every $\mu\in\M^1(\overline\Omega)$, and the conclusion is immediate.

\emph{Step 1: Recovery sequence at a minimizer of the limit problem.} Consider \begin{equation}\label{eq:Olb} \kappa := \inf_\lambda \setBB{ \int_{\R^n} \rho^\circ\left(\frac{\di\lambda}{\di|\lambda|}\right)\dd|\lambda| }{ \lambda\in\M(\overline\Omega;\Msn),\ -\dive\lambda=f }, \end{equation} where the divergence constraint is understood in $\Dcal'(\R^n;\R^n)$. The admissible class is nonempty by Proposition~\ref{prop:riesz type theorem}. Moreover, by~\eqref{eq:jbar_est}, every admissible $\lambda$ satisfies {\[ \int_{\R^n} \rho^\circ\left(\frac{\di\lambda}{\di|\lambda|}\right)\dd|\lambda| \geq c|\lambda|(\R^n), \]} and the direct method, together with Reshetnyak's lower semicontinuity theorem, gives the existence of a minimizer for~\eqref{eq:Olb}. Since $f\not\equiv0$, we have $\kappa>0$. By~\cite[Theorem~2.3]{BouchitteButtazzo01}, applied to $\bar j$, there exist $\mu^*\in\M^1(\overline\Omega)$ and $\sigma^*\in\Lrm^1(\R^n,\mu^*;\Msn)$ such that \[ -\dive(\sigma^*\mu^*)=f, \] \[ \overline{\mathscr C}(\mu^*) = \min_{\mu\in\M^1(\overline\Omega)}\overline{\mathscr C}(\mu) = \frac{\kappa^2}{2}, \] and \begin{equation}\label{eq:kappa} \rho^\circ(\sigma^*)=\kappa \qquad \mu^*\text{-a.e.} \end{equation} In particular, $\sigma^*\in\Lrm^\infty(\R^n,\mu^*;\Msn)$ by~\eqref{eq:jbar_est}, $\sigma^*$ is a minimizer in the definition of $\overline{\mathscr C}(\mu^*)$, and \[ \lambda^*:=\sigma^*\mu^* \] is a minimizer of~\eqref{eq:Olb}. Fix $m\in(0,1)$. Applying Proposition~\ref{prop:Olbermann} with \( \e':=\frac{\e\sqrt m}{\kappa}, \) we obtain a family $\lambda_\e^m\in\Lrm^2(\Omega;\Msn)$ such that \[ \lambda_\e^m\wsto\lambda^* \qquad\text{in }\M(\R^n;\Msn), \] \[ \|-\dive\lambda_\e^m-f\|_{\Hrm^{-1}(\R^n;\R^n)}\to0, \] and \begin{equation}\label{eq:hme_conv} \lim_{\e\todown0} \int_\Omega h_\e^m(\lambda_\e^m)\dd x = \frac{\kappa}{\sqrt m} \int_{\overline\Omega} \rho^\circ\left(\frac{\di\lambda^*}{\di|\lambda^*|}\right)\dd|\lambda^*|, \end{equation} where \[ h_\e^m(\tau):= \begin{cases} \displaystyle \e j^*(\tau)+\frac{\kappa^2}{2m\e} & \text{if }\tau\neq0,\\[4pt] 0 & \text{if }\tau=0. \end{cases} \] Since $\lambda^*=\sigma^*\mu^*$, the positive one-homogeneity of $\rho^\circ$ and~\eqref{eq:kappa} give \[ \int_{\overline\Omega} \rho^\circ\left(\frac{\di\lambda^*}{\di|\lambda^*|}\right)\dd|\lambda^*| = \int_{\overline\Omega}\rho^\circ(\sigma^*)\,\dd\mu^* = \kappa. \] As in the proof of~\cite[Theorem 1.1]{BabadjianIurlanoRindler23}, the fields $\lambda_\e^m$ can be modified by {a} standard volume and equilibrium correction argument {(which is detailed in~\cite{BabadjianIurlanoRindler23})}. Letting first $\e\todown0$ and then $m\uparrow1$, {in conjunction with} a diagonal argument, we obtain $\omega_\e^*\in\A_\e$ and \[ \mu_\e^*:=\frac{\LL^n\res\omega_\e^*}{\e}\in\M^1(\overline\Omega), \qquad \mu_\e^*\wsto\mu^* \quad\text{in }\M(\R^n), \] together with admissible stresses $\sigma_\e^*\in\Lrm^2(\R^n,\mu_\e^*;\Msn)$ satisfying \[ -\dive(\sigma_\e^*\mu_\e^*)=f \] and \begin{equation}\label{eq:C3} \limsup_{\e\todown0} \int_\Omega j^*(\sigma_\e^*)\,\dd\mu_\e^* \leq \overline{\mathscr C}(\mu^*). \end{equation} Consequently, \begin{equation}\label{eq:recovery} \lim_{\e\todown0}\mathscr C_\e(\mu_\e^*) = \overline{\mathscr C}(\mu^*) = \min_{\mu\in\M^1(\overline\Omega)}\overline{\mathscr C}(\mu). \end{equation} Indeed, the upper bound follows from~\eqref{eq:C3}, while the opposite inequality follows from Proposition~\ref{prop:liminf}. 

\emph{Step 2: Almost minimizers and compactness.} Let $\alpha_\e\todown0$ and let $\omega_\e\in\A_\e$ be such that \[ \mathscr C_\e\left(\frac{\LL^n\res\omega_\e}{\e}\right) \leq \inf_{\omega\in\A_\e} \mathscr C_\e\left(\frac{\LL^n\res\omega}{\e}\right) + \alpha_\e. \] Set \[ \bar\mu_\e:=\frac{\LL^n\res\omega_\e}{\e}. \] Up to a subsequence, $\bar\mu_\e\wsto\bar\mu$ in $\M(\R^n)$ for some $\bar\mu\in\M^1(\overline\Omega)$. By Proposition~\ref{prop:liminf}, \[ \overline{\mathscr C}(\bar\mu) \leq \liminf_{\e\todown0}\mathscr C_\e(\bar\mu_\e). \] On the other hand, by almost minimality and by the recovery sequence constructed above, \[ \limsup_{\e\todown0}\mathscr C_\e(\bar\mu_\e) \leq \limsup_{\e\todown0}\inf_{\mu\in\M^1(\overline\Omega)}\mathscr C_\e(\mu) \leq \limsup_{\e\todown0}\mathscr C_\e(\mu_\e^*) = \overline{\mathscr C}(\mu^*). \] Since $\mu^*$ minimizes $\overline{\mathscr C}$, we get the chain of inequalities \[ \min_{\M^1(\overline\Omega)}\overline{\mathscr C} \leq \overline{\mathscr C}(\bar\mu) \leq \liminf_{\e\todown0}\mathscr C_\e(\bar\mu_\e) \leq \limsup_{\e\todown0}\mathscr C_\e(\bar\mu_\e) \leq \overline{\mathscr C}(\mu^*) = \min_{\M^1(\overline\Omega)}\overline{\mathscr C}. \] Therefore all inequalities are equalities. In particular, $\bar\mu$ is a minimizer of the limit problem and \[ \lim_{\e\todown0}\mathscr C_\e(\bar\mu_\e) = \min_{\M^1(\overline\Omega)}\overline{\mathscr C}. \] Moreover, \[ \lim_{\e\todown0} \inf_{\mu\in\M^1(\overline\Omega)} \mathscr C_\e(\mu) = \min_{\mu\in\M^1(\overline\Omega)} \overline{\mathscr C}(\mu). \] This proves Theorem~\ref{thm:conv-min}. \end{proof}

\bibliography{refs}
\bibliographystyle{plain}

\end{document}